\pgfplotsset{compat=newest} 
\crefname{enumi}{}{}
\crefname{equation}{}{}
\DeclareMathOperator{\Tr}{Tr}
\DeclareMathOperator{\sgn}{sgn}
\DeclareMathOperator{\NCP}{NC}
\DeclareMathOperator{\NCG}{NCG}
\DeclareMathOperator{\E}{\mathbf{E}}
\DeclareMathOperator{\Prob}{\mathbf{P}}
\DeclareMathOperator{\pTr}{pTr}
\newcommand{\ov}{\overline}
\newcommand{\ii}{\mathrm{i}}
\renewcommand{\C}{\mathbf{C}}
\newcommand{\C}{\mathbf{C}}
\newcommand{\un}{\underline}
\newcommand{\vx}{\bm{x}}
\newcommand{\vl}{{\bm{l}}}
\newcommand{\vj}{{\bm{j}}}
\newcommand{\vy}{\bm{y}}
\newcommand{\wt}{\widetilde}
\newcommand{\cE}{\mathcal{E}}
\newcommand{\av}{\mathrm{av}}
\newcommand{\iso}{\mathrm{iso}}
\newcommand{\R}{\mathbf{R}}
\newcommand{\N}{\mathbf{N}}
\newcommand{\Z}{\mathbf{Z}}
\newcommand{\cG}{\mathcal{G}}
\newcommand{\cO}{\mathcal{O}}
\newcommand{\co}{{\scriptstyle\mathcal{O}}}
\newcommand{\dif}{\operatorname{d}\!{}}
\DeclarePairedDelimiter{\braket}{\langle}{\rangle}%
\DeclarePairedDelimiter{\abs}{\lvert}{\rvert}%
\DeclarePairedDelimiter{\norm}{\lVert}{\rVert}%
\providecommand\given{}
\newcommand\SetSymbol[1][]{\nonscript\:#1\vert\allowbreak\nonscript\:\mathopen{}}
\DeclarePairedDelimiterX{\tuple}[1](){\renewcommand\given{\SetSymbol[\delimsize]}#1}
\DeclarePairedDelimiterX{\set}[1]\{\}{\renewcommand\given{\SetSymbol[\delimsize]}#1}
\DeclarePairedDelimiterXPP{\landauO}[1]{\cO}(){}{#1}
\DeclarePairedDelimiterXPP{\landauo}[1]{\co}(){}{#1}
\DeclarePairedDelimiterXPP{\landauOprec}[1]{\cO_\prec}(){}{#1}
\numberwithin{equation}{section} 
\newtheorem{theorem}{Theorem}[section]
\newtheorem{lemma}[theorem]{Lemma}
\newtheorem{proposition}[theorem]{Proposition}
\newtheorem{remark}[theorem]{Remark} 
\newtheorem{corollary}[theorem]{Corollary}
\newtheorem{definition}[theorem]{Definition}
\date{\today}
\author{Giorgio Cipolloni}
\address{Princeton Center for Theoretical Science, Princeton University, Princeton, NJ 08544, USA}
\author{L\'aszl\'o Erd\H{o}s\(^\#\)}
\address{IST Austria, Am Campus 1, 3400 Klosterneuburg, Austria}
\author{Dominik Schr\"oder\(^{\ast}\)}
\address{Institute for Theoretical Studies, ETH Zurich, Clausiusstr.\ 47, 8092 Zurich, Switzerland}
\email{gc4233@princeton.edu} 
\email{lerdos@ist.ac.at}
\email{dschroeder@ethz.ch}
\thanks{\(^\#\)Supported by ERC Advanced Grant ``RMTBeyond'' No.~101020331}
\thanks{\(^\ast\)Supported by Dr.\ Max R\"ossler, the Walter Haefner Foundation and the ETH Z\"urich Foundation}
\subjclass[2010]{60B20, 15B52} 
\keywords{Global Law, Local Law, Random Matrices}
\title{Optimal multi-resolvent local laws for Wigner matrices}
\date{\today} 
\begin{document} 
\thispagestyle{empty}

\begin{abstract} 
    We prove local laws, i.e.\ optimal concentration estimates for   arbitrary products of resolvents 
    of a Wigner random matrix with  deterministic matrices in between. 
    We find  that the size of such products heavily depends on 
    whether some of the deterministic matrices are traceless.
    Our estimates correctly  account for this dependence
    and they hold optimally 
    down to the smallest possible spectral scale.
\end{abstract}
\maketitle

\section{Introduction}
A remarkable feature of large  Hermitian random matrices $H$ is that their
resolvents $G(z)=(H-z)^{-1}$ tend to concentrate around a deterministic matrix $M=M(z)$ 
for spectral parameters $z\in\C$ even just slightly away from the real axis.
If the correlation among the matrix entries of $H$ is sufficiently weak,
in particular for \emph{Wigner matrices} with independent (up to Hermitian symmetry)
and identically distributed matrix elements, this phenomenon  
holds as long as $\abs{\Im z}$ is just slightly above the typical eigenvalue spacing around $\Re z$.
While the random matrix $H$ strongly fluctuates around its mean $\E H$, 
it is surprising that the resolvent has such a strong concentration 
property even on small spectral scales. 
Rigorous results of this type are generally called \emph{local laws} and
they play a fundamental role in random matrix theory since they
are able to resolve spectral properties of $H$ almost down to individual eigenvalues.
We remark that for Wigner matrices $M(z)=m(z) I$  is 
the multiple of the identity matrix, where $m$ is the Stieltjes transform of
Wigner's semicircle distribution.  For more general ensembles $M$ is given as the solution 
of the  \emph{(matrix) Dyson equation}, a non-linear deterministic equation~\cite{MR3916109}.

Historically, the primary motivation for local laws was to provide the necessary a priori 
estimates  in the \emph{three step strategy} to prove the Wigner-Dyson-Mehta
spectral universality for random matrices via the Dyson Brownian Motion  (DBM), see~\cite{MR3699468} for a comprehensive summary. The first local law was  proved  for Wigner matrices in the tracial sense~\cite{MR2481753}; extended later to more general \emph{entry-wise}~\cite{MR2981427}  and \emph{isotropic}~\cite{MR3103909} senses, as well as to  much more general  classes of random matrices,
including nonzero expectation~\cite{MR3134604, MR3502606,MR3800833}, nontrivial variance profile~\cite{MR3719056}, and even correlations~\cite{MR3916109,MR3941370}. Numerous related works  focused on local laws for band matrices~\cite{MR3085669,MR3934695,MR4221663, 2104.12048, 2107.05795}, sparse matrices~\cite{MR3098073,MR3688032, MR3800840,MR3962004,MR4135670,MR3800840},
heavy tails~\cite{MR3129806,MR3916110}, accurate error terms~\cite{MR3405612,MR4125959},
general
invariant $\beta$-ensembles~\cite{MR4389077, MR2905803, MR3192527, MR3253704, MR4168391, MR4278668, MR4009708, MR3755113, MR2966359}
and many more.  

With a very few recent exceptions, listed at the end of Section~\ref{sec:rule},
all local laws so far concerned a single resolvent. Their {\it averaged} and {\it isotropic}
versions  assert that
for any fixed $\epsilon>0$, deterministic test matrix $B$ and test vectors $\vx, \vy$, the bounds
\begin{equation}\label{1G}
    \abs{\braket{(G(z)-M(z))B}} \le \frac{N^\epsilon \norm{B}}{N\eta}, \qquad 
    \abs{\braket{\vx, (G(z)-M(z))\vy}} \le \frac{N^\epsilon\norm{\vx}\norm{\vy}}{\sqrt{N\eta}},\qquad  \eta:=\abs{\Im z}
\end{equation}
hold with very high probability, where $N$ is a dimension of $H$, $\braket{R}:= \frac{1}{N}\Tr R$
denotes the normalized trace and $\braket{\cdot ,\cdot }$ denotes the scalar product in $\C^N$.
The estimates~\eqref{1G} are
optimal in the critical small $\eta$ regime (up to the factor $N^\epsilon$).

This paper is concerned with the multi-resolvent generalizations of~\eqref{1G}.
If $G$  is approximated by  $M$, 
what approximates the square of the resolvent? 
The naive answer  $G^2\approx M^2$ is wrong, even for the simplest Wigner case
since the approximation $G\approx M$ in~\eqref{1G} holds true only in weak sense; it cannot be
``squared''.  Nevertheless $G^2$ still concentrates
and the hint given by the identity 
$G(z)^2=\partial_z G(z)$ leads to the correct answer. Indeed $G(z)^2\approx \partial_z M(z)$ in the sense
\begin{equation}\label{2G}
    \abs{\braket{(G(z)^2-\partial_z M(z))B}} \le \frac{N^\epsilon\norm{B}}{N\eta^2}, \qquad  
    \abs{\braket{\vx, (G(z)^2-\partial_z M(z))\vy}} \le \frac{N^\epsilon\norm{\vx}\norm{\vy}}{\sqrt{N}\eta^{3/2}},
\end{equation}
and again the error terms are optimal.
Note that these  error terms match the differentiation procedure; indeed~\eqref{2G} can formally be 
obtained by ``differentiating''~\eqref{1G}.

Such algebraic ideas, however, do not help much further if we ask for  concentration of the alternating  product 
\begin{equation}\label{GBG}
    G(z_1)B_1 G(z_2) B_2 G(z_3)\ldots B_{k-1} G(z_k) 
\end{equation} 
of resolvents and deterministic matrices $B_1, B_2, \ldots $, 
and more generally for
\begin{equation}\label{fBf}
    f_1(H) B_1 f_2(H) B_2 \ldots B_{k-1} f_k(H),
\end{equation} 
where $f_i$'s are  arbitrary functions on $\R$.
The product~\eqref{GBG} still concentrates but its
deterministic  approximation, denoted by  $M(z_1, B_1, z_2, \ldots, B_{k-1}, z_k)$,
is non-trivial even for the Wigner case and it was identified only recently in~\cite[Theorem 3.4]{MR4372147}
(however, formulas for traces of~\eqref{fBf} when $f_i$'s are polynomials have already  been obtained 
within free probability theory, see e.g.~\cite[Theorem 5.4.5]{MR2760897} or \cite[Sect 4. Thm 20.]{MR3585560}).
The main result of the current work is to prove the optimal error term for this approximation
and thus to establish the optimal local law for any product of the type~\eqref{GBG} when $H$ is from the  Wigner ensemble
(Theorem~\ref{thm local law}).
These optimal multi-resolvent local laws will then be used to establish the universality of the Gaussian fluctuations  of~\eqref{fBf}
in subsequent works. To keep the current paper focused, we present here  only one simple application
of our new local law to
improve  our control on the  thermalisation effect of the Wigner matrices (see Remark~\ref{rmk:therm} below).

In connection with CLT for linear eigenvalue statistics, special cases of tracial  local laws for~\eqref{GBG} for $k=2, 3$ 
have been proven 
in~\cite{2103.05402,MR3678478,MR3805203, MR4095015,MR4119592,MR4255183,MR4255226}.  
These results, however, 
considered the special $B_i=I$ case, where resolvent identities can directly reduce the number of $G$'s.
More importantly, the accurate analysis of the case with general $B$'s must handle traceless $B$'s separately
as we explain in the next subsection.

\subsection{The role of the traceless matrices}\label{sec:rule}
The major complication for the multi-resolvent local law is that the size of  $M(z_1, B_1, z_2, \ldots, B_{k-1}, z_k)$
heavily depends on whether some of the matrices $B_i$  are traceless or not,
and the  error term must match the size of $M$ to be considered optimal.
For example, if  $B_1=B_2=\ldots  = B_{k-1}=I$, then
$\braket{M(z_1, I, z_2, \ldots, I, z_k)}\sim (1/\eta)^{k-1}$   with $\eta:= \min |\Im z_i|$  in the interesting
regime where $\eta\lesssim 1$,  
and the corresponding local law
\begin{equation}\label{GGGG}
    |\braket{G(z_1) G(z_2)G(z_3)\ldots G(z_k)-M(z_1, I, z_2, \ldots, I, z_k)} | \le \frac{N^\epsilon}{N\eta^k} = \frac{1}{\eta^{k-1}} \frac{N^\epsilon}{N\eta}
\end{equation}
is optimal (up  to $N^\epsilon$) for $\eta\lesssim1$. 
 Note that the error term is by a factor $N^\epsilon/N\eta$ smaller than the  deterministic approximation,
hence~\eqref{GGGG} proves concentration for any $\eta\gg 1/N$.

Exactly the same estimate  holds for \eqref{GBG}
with general deterministic matrices
$B_i$  with $\| B_i\|=1$ instead of $B_i=I$, see~\cite[Theorem 3.4]{MR4372147}.
However, if all $B_1, B_2, \ldots, B_k$ are traceless, $\braket{B_i}=0$, then  
in the  $\eta\lesssim 1$  regime typically 
\begin{equation}\label{Mz}
    \braket{M(z_1, B_1, z_2, \ldots, B_{k-1}, z_k)B_k}\sim \frac{1}{\eta^{\lfloor k/2\rfloor-1}},
\end{equation}
therefore  $N^\epsilon/(N\eta^k)$  in~\eqref{GGGG} is much
bigger than the deterministic approximation. %
This indicates that the  robust error term 
proven in~\cite[Theorem 3.4]{MR4372147} for general  matrices is far from being optimal
when traceless matrices are involved, but it does not give a hint what the optimal error term should be.

The correct answer, in a heuristic form, can be formulated by 
the following rule of thumb that we coin  the $\sqrt{\eta}$-rule (in the $\eta\lesssim 1$ regime):
\begin{itemize}
    \item[\textbf{\(\sqrt{\bm\eta}\)-rule:}] Each traceless matrix $B_i$ reduces both the size of $M$ and the error term 
    by a factor $\sqrt{\eta}$.
\end{itemize}
Establishing the $\sqrt{\eta}$-rule for $M$ is relatively  straightforward
given its explicit form, but for the error term it is much harder -- this is the main content
of the current paper. 

The  special role of a traceless deterministic matrix even for the single resolvent local law
was observed only  recently in~\cite{2012.13218}, where  it was shown that  
\[
\abs{\braket{(G(z)-m(z))B}} =  \abs{\braket{G(z)B} } \le \frac{N^\epsilon}{N\sqrt{\eta}}
\]
if $\braket{B}=0$ in contrast to the much bigger error of order $1/(N\eta)$ for general $B$ in~\eqref{1G}. In fact, $G-m$ has two different fluctuation modes, a tracial and a traceless one,
expressed somewhat informally in the following two-scale central limit theorem  
\begin{equation}\label{2CLT}
    \braket{(G(z)-m(z))B} %
    \approx \braket{B}\frac{\xi_1}{N\eta} + \braket{ \mathring{B}\mathring{B}^*}^{1/2} \frac{\xi_2}{N\sqrt{\eta}}
\end{equation}
where $\xi_1$ and $\xi_2$ are independent Gaussian variables and
$\mathring{B}:= B-\braket{B}$ is the traceless part of $B$. The asymptotics $\approx$ in~\eqref{2CLT}
is understood in the sense of all moments and in the limit as $N\eta\gg1$; see \cite[Theorem 4.1]{MR4372147} 
for the precise statement. 

Tracking the influence of the traceless deterministic matrices in multi-resolvent local laws for Wigner matrices
played an essential role in our proof of the {\it Eigenstate thermalisation hypothesis}~\cite{MR4334253}, and
in the {\it functional central limit theorems} to understand the fluctuation modes of 
$f(W)$ as a matrix \cite{2012.13218}. 
However, in these papers only two-  and three-resolvent local laws were necessary and suboptimal error was sufficient.
For example, a key technical ingredient  in~\cite{MR4334253} was the local law
\begin{equation}\label{GBGB}
    \braket{G(z)BG^*(z)B} = |m(z)|^2 \braket{BB^\ast} + O\Big( \frac{N^\epsilon}{\sqrt{N\eta}}\Big)
\end{equation}
for any $\braket{B}=0$ with $\|B\|\lesssim 1$, which in particular implied the upper bound
$$  
\braket{G(z)BG(z)B} = \landauO{1}, \qquad \mbox{for $N\eta\ge N^{2\epsilon}$}
$$
in agreement with~\eqref{Mz} applied to $k=2$.  In the relevant small $\eta$ regime 
the error in~\eqref{GBGB} is better  than the 
robust  error of order $1/(N\eta^2)$ from~\eqref{GGGG} valid irrespective whether $B$ is traceless or not,
but~\eqref{GBGB} is still far from  optimal. The $\sqrt{\eta}$-rule   predicts  an error term of order $1/(N\eta)$
in~\eqref{GBGB},  a factor of $(\sqrt{\eta})^2$ better than the robust error~\eqref{GGGG}, 
while~\eqref{GBGB} does not even get the optimal $N$-power that is naturally expected in the $\eta\sim 1$
regime.
Similarly, specific three-resolvent local laws that were proven in~\cite[Proposition 3.4]{2012.13218},
also came with suboptimal errors.
Finally, we mention a related two-resolvent local law for the  Hermitization
of an i.i.d. matrix in~\cite[Theorem 5.2]{1912.04100} where the mechanism 
for the reduced error term is different from the $\sqrt{\eta}$-rule.

\subsection{Strategy of the proof} We developed a  very concise  new method to 
prove multi-resolvent local laws. 
The basic idea for all local law proofs is to show that
$G$, or in the multi-resolvent case $GBGB\ldots G$ from~\eqref{GBG}, approximately satisfies the Dyson equation, 
the defining equation of the corresponding $M$. 
In the previous approaches the fluctuating error term in this  approximation was treated separately and
it was shown to be negligible with the help of a high moment cumulant expansion.
The expansion generated many terms and a fairly involved Feynman diagrammatic representation  
was needed to bookkeep and estimate them. This becomes especially cumbersome where
some additional smallness effect needs to be consistently tracked along the whole expansion.
For example, in the main technical Theorem 4.1 in~\cite{MR4334253}, we  meticulously counted
the number of ``effectively'' traceless $B$ factors, struggling with the complication that some $B$ factor becomes $B^2$ along the cumulant expansion, losing its smallness effect. Even suboptimal 
error terms for small $k$ as in~\eqref{GBGB} required major efforts and the general case was out of reach.

Our new method drastically simplifies this procedure using two unrelated ideas. First, 
the large Feynman diagrammatic representation is actually due to an overexpansion
of the fluctuating error term which  can be considerably reduced if one expands ``minimalistically'', so to say.
In the context of single resolvent  averaged local laws this idea appeared first in~\cite{MR3800840},
coined as \emph{recursive moment estimates}, we will use 
this philosophy for the multi-resolvent situation and also for the isotropic
case. 

Second, the fundamental concern in  the proofs of multi-resolvent local laws is how to truncate the resulting hierarchy
involving longer and longer chains of the form $GBGB\ldots G$. The cumulant expansion
for a chain of length $k$   as in~\eqref{GBG} will contain chains of length up to $2k$.
For the single resolvent  local law, $k=1$, this problem is usually solved by the 
Ward identity $GG^*= \Im G/\eta$, immediately  reducing longer chains to a single resolvent.
If  traceless matrices are in between $G$'s such identity is not directly applicable.
In~\cite{MR4334253} we solved this problem by  considering the positive quantity
$\Lambda^2:=\braket{\Im G B \Im G B}$ for traceless $B$ and estimated all longer chains
in terms of $\Lambda$, to  arrive, finally,  at a simple Gromwall-type  inequality for $\Lambda$, roughly of the type
\begin{equation}\label{Grom}
    \Lambda^2\lesssim 1 + \frac{\Lambda^2}{N\eta},
\end{equation}
from which $\Lambda\lesssim 1$ immediately follows.
The reduction of longer chains to $\Lambda$'s  involved a careful  Schwarz inequality within
the spectral decomposition  of $H$, for example for an averaged chain involving $2k$ resolvents
(using  $\Im G$'s instead of $G$ for illustrational simplicity) we used
\begin{equation}\label{4G}
    \begin{split}
        |\braket{ (\Im G\,  B)^{2k}}| 
        &= \frac{1}{N} \Big| \sum_{i_1\ldots i_{2k}} \langle {\bm u}_{i_1}, B{\bm u}_{i_2} \rangle  \langle {\bm u}_{i_2}, B{\bm u}_{i_3} 
        \rangle \ldots
        \langle {\bm u}_{i_{2k}}, B{\bm u}_{i_1}\rangle %
        \prod_{j=1}^{2k} \Im \frac{1}{\lambda_{i_j}-z} %
        \Big| \\
        & \le  \frac{1}{N} \Big( \sum_{ij}  |\langle {\bm u}_i, B{\bm u}_j\rangle |^2  \Im \frac{1}{\lambda_i-z}\Im \frac{1}{\lambda_j-z}\Big)^k\\
        &= N^{k-1} \braket{\Im G B \Im G B}^k.
    \end{split}
\end{equation}
Here $\lambda_i$ and  ${\bm u}_i$ 
are the eigenvalues and the orthonormal eigenvectors of $H$, respectively. The size of the l.h.s., based upon
its deterministic approximation~\eqref{Mz}, is $\eta^{-k+1}$, while the r.h.s. is of order $N^{k-1}$
hence this inequality lost a factor $(N\eta)^{k-1}$. 
Very roughly, each summation in~\eqref{4G} effectively runs over $N\eta$ different $i$ indices
and if each summand were independent, then an effective central limit theorem would reduce
the size by a factor $1/\sqrt{(N\eta)^{2k}}=(N\eta)^{-k}$,  in reality this effect is  weaker
by a factor $N\eta$. Nevertheless, for larger $k$'s this loss  in the Schwarz inequality
in~\eqref{4G} cannot be recovered from the smallness of higher order cumulants,
which eventually results in suboptimal error terms
in the local law in~\cite{MR4334253}. Another complication is that the bound~\eqref{4G} is 
also needed for $(GB)^k$. Since spectrally  $G$  is much less localized than $\Im G$, 
technically we could not do the analysis  locally in the spectrum and $\Lambda$ was actually 
defined after taking a supremum over the real parts of the  spectral parameters $z_i$ 
in $G$'s.

The basic objects in the current paper are %
the appropriately  rescaled versions of the {\it differences} $(GB)^k-M_kB$
between  alternating chains  of length $k$ and
their deterministic counterparts $M_k$. %
More precisely, we set
\begin{equation}\label{psi}
    \Psi^\av_k: = N\eta^{k/2}| \braket{(GB)^k-M_kB}|,
\end{equation}
and its isotropic version $\Psi^\iso_k$ is defined similarly. The general definition allows for
different spectral parameters and different $B$ matrices in the $GBGBGB....$ chain
but we ignore this technicality here. 
The rescaling is chosen such that $\Psi_k^{\av, \iso}\lesssim 1$ 
corresponds to the optimal local laws to be proven.

The "minimalistic"  cumulant expansion applied directly  to the moments of  $\Psi$'s
generates further chains of alternating products of resolvents and $B$'s.
Each of them is expressed as their deterministic "main term" $M$ plus
the error term involving  $\Psi$'s, i.e. for this purpose  we write~\eqref{psi} as 
$$
\braket{(GB)^k} = \braket{M_kB} + \mathcal{O} \Big(\frac{ \Psi^\av_k}{N\eta^{k/2}}\Big),
$$
and similarly for matrix elements $[(GB)^k]_{ab}$. 
The explicit $M_k$ terms can be directly 
estimated, leaving us with  a nonlinear infinite hierarchy
of coupled {\it master inequalities} for $\Psi^\av_k$ and $\Psi^\iso_k$ for each $k$ (Proposition~\ref{prop Psi ineq}).
The estimate for $\Psi_k$ still contains terms involving $\Psi_{2k}$ 
since the cumulant expansion generates longer chains. 
This time, however, we truncate the hierarchy
in the most economical way; roughly speaking a chain 
of length $2k$ is split into two chains of length $k$ instead
of $k$ chains of length two as in~\eqref{4G}.  Hence
many fewer $N\eta$ factors are lost in the analogue of~\eqref{4G}; the 
loss is only $(N\eta)^2$ for the averaged bounds and $N\eta$ in the isotropic bound,
independently of $k$ (see Lemma~\ref{lemma doubling} below).

Even after the reduction of longer chains to shorter ones, the
new truncated system of master inequalities cannot be closed by a simple algebra,
in contrast to the  single inequality~\eqref{Grom} derived for $\Lambda$. %
We first prove a non-optimal {\it a priori} bound $\Psi_k^{\av, \iso}\lesssim \sqrt{N\eta}$ {\it for all $k$} with 
a step-two induction argument and successively improving the power of $N\eta$ in each step.
Then we start the procedure all over again, but now we will not use the reduction of $\Psi_{2k}$'s back to 
$\Psi_k$'s that would cost us  $(N\eta)$ or $(N\eta)^2$ factors; we rather use the 
already  proven a priori bound $\Psi_{2k}\lesssim \sqrt{N\eta}$
that loses only $\sqrt{N\eta}$. It turns out that such a loss can finally be compensated by the 
smaller size of the higher cumulants.

Summarizing, the key conceptual novelty in the current approach compared with~\cite{MR4334253} is twofold.
First, in~\cite{MR4334253} we operated with upper bounds on size of the chains, like~\eqref{4G}, 
while now we operate on the level of the much more precise $\Psi$'s measuring the fluctuations
of the chains, i.e. their deviations from their deterministic counterpart. 
This enables us to determine the leading order term for resolvent chains of any length, and perform a more accurate analysis purely on the level of sub-leading deviations.
Second, longer chains
are split only into two smaller chains, yielding much less $(N\eta)$-factors lost.
However, the price  for this higher accuracy is that we need to 
handle a new infinite system of inequalities for the $\Psi$'s. Finally, two important
technical differences are that (i) we can work locally in the spectrum and (ii) now we use the minimalistic cumulant expansion 
that considerably shortens the argument.

\subsection*{Notation and conventions}
We introduce some notations we use throughout the paper. For integers \(l,k\in\N \) we use the notations \([k]:= \set{1,\ldots, k}\), and
\[ 
[k,l):=\{ k, k+1,\ldots, l-1\}, \qquad [k,l]:=\{ k, k+1,\ldots, l-1, l\}
\]
for \(k< l\). By $\lceil \cdot \rceil$, $\lfloor \cdot \rfloor$ we denote the upper and lower integer part, respectively, i.e. for $x\in \mathbf{R}$ we define $\lceil x\rceil:=\min\{ m\in\mathbf{N}: \, m\ge x\}$ and $\lfloor x\rfloor:=\max\{ m\in\mathbf{N}: \, m\le x\}$. For positive quantities \(f,g\) we write \(f\lesssim g\) and \(f\sim g\) if \(f \le C g\) or \(c g\le f\le Cg\), respectively, for some constants \(c,C>0\) which depend only on the constants appearing in the moment condition, see~\eqref{mom} later.
 We denote vectors by bold-faced lower case Roman letters \({\bm x}, {\bm y}\in\C ^N\), for some \(N\in\N\). Vector and matrix norms, \(\norm{\vx}\) and \(\norm{A}\), indicate the usual Euclidean norm and the corresponding induced matrix norm. For any \(N\times N\) matrix \(A\) we use the notation \(\braket{ A}:= N^{-1}\Tr  A\) to denote the normalized trace of \(A\). Moreover, for vectors \({\bm x}, {\bm y}\in\C^N\) and matrices  \(A\in\C^{N\times N}\) we define
\[ \braket{ {\bm x},{\bm y}}:= \sum_{i=1}^N \overline{x}_i y_i, \qquad A_{\vx\vy}:=\braket{\vx,A\vy}.\]

We will use the concept of ``with very high probability'' meaning that for any fixed \(D>0\) the probability of an \(N\)-dependent event is bigger than \(1-N^{-D}\) if \(N\ge N_0(D)\). Moreover, we use the convention that \(\xi>0\) denotes an arbitrary small constant which is independent of \(N\). We introduce the notion of \emph{stochastic domination} (see e.g.~\cite{MR3068390}): given two families of non-negative random variables
\[
X=\tuple*{ X^{(N)}(u) \given N\in\N, u\in U^{(N)} }\quad\text{and}\quad Y=\tuple*{ Y^{(N)}(u) \given N\in\N, u\in U^{(N)} }
\] 
indexed by \(N\) (and possibly some parameter \(u\)) we say that \(X\) is stochastically dominated by \(Y\), if for all \(\epsilon, D>0\) we have \[
\sup_{u\in U^{(N)}} \Prob\left[X^{(N)}(u)>N^\epsilon  Y^{(N)}(u)\right]\leq N^{-D}
\]
for large enough \(N\geq N_0(\epsilon,D)\). In this case we use the notation \(X\prec Y\) or \(X= \landauOprec*{Y}\).

\section{Main results}\label{sec:norm}
We start with the definition of the matrix model we consider.  
\begin{definition}
    \label{def:wigner}
    We call $W$ a Wigner matrix
    if it is an $N\times N$ random Hermitian matrix which satisfies the following properties. The off-diagonal matrix elements below the diagonal are centred independent, identically distributed (i.i.d) real (\(\beta=1\)) or complex \((\beta=2)\) random variables with 
    $\E\abs{w_{ij}}^2=1/N$. Additionally, in the complex case we assume that \(\E w_{ij}^2=0\). The diagonal elements are centred i.i.d.\ real 
    random variables with \(\E  w_{ii}^2 = 2/(N\beta)\). Furthermore, we assume that for every  $q\in N$  there is a constant $C_q$ such that
    \begin{equation}
        \label{mom}
        \E \abs{ \sqrt{N} w_{ij}}^{q}\le C_q.
    \end{equation}
\end{definition}
\begin{remark}
    The assumptions \(\E w_{ij}^2=0\) in the complex case, and \(\E w_{ii}^2=2/(\beta N)\) are made to make the presentation clearer. All our results can be easily extended to this case as well, but we refrain from doing it for notational simplicity.
\end{remark}
We set $G(z):= (W-z)^{-1}$ to be resolvent of the Wigner matrix $W$ with spectral parameter $z\in\C\setminus\R$. 
The \emph{optimal local law} asserts that \(G(z)\) is approximately equal to \(m(z)I\) down to the microscopic scale \(\abs{\Im z}\gg 1/N\), where 
\begin{equation}
    m(z)= m_\mathrm{sc}(z):=\int_{-2}^2\frac{1}{x-z}\rho_\mathrm{sc}(x)\dif x,\qquad \rho_\mathrm{sc}(x):=\frac{\sqrt{4-x^2}}{2\pi}
\end{equation} 
is the Stieltjes transform of the semicircular distribution. 
\begin{theorem}\label{thm sG local law}
    For any \(z\in\C\setminus\R\) with $|z|\le N^{100}$, 
     \(d:=\mathrm{dist}(z,[-2,2])\), \(\eta:=\abs{\Im z}\) and any deterministic vectors \(\vx,\vy\) it holds that 
    \begin{equation}
        \abs{\braket{G-m}}\prec  \begin{cases}
            \frac{1}{N\eta}, & d<1\\ 
            \frac{1}{Nd^2}, & d\ge 1,
        \end{cases} \quad \abs{\braket{\vx,(G-m)\vy}}\prec  \norm{\vx}\norm{\vy}\begin{cases}
            \frac{\sqrt{\abs{\Im m(z)}}}{\sqrt{N\eta}} +\frac{1}{N\eta}, & d<1\\ 
            \frac{1}{\sqrt{N}d^{2}}, & d\ge 1.
        \end{cases}
    \end{equation}
\end{theorem}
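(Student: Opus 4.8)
The plan is to prove Theorem~\ref{thm sG local law} by the standard self-consistent (Dyson) equation method, specialized to the Wigner case, and treating the averaged and isotropic estimates together through a single self-improving fluctuation bound. First I would write $G = G(z)$ and use the identity $WG = I + zG$ together with the cumulant (or Gaussian integration-by-parts plus higher-order correction) expansion of $\E[WG]$ against test matrices and vectors. This produces the approximate self-consistent equation
\begin{equation}
  1 + zG + \braket{G}G = D,
\end{equation}
where $D$ is a fluctuating error term. Introducing the control quantities $\Lambda^{\av}:=\abs{\braket{G-m}}$ and $\Lambda^{\iso}:=\abs{\braket{\vx,(G-m)\vy}}/(\norm{\vx}\norm{\vy})$, one linearizes around $m$ using the defining relation $m + 1/m + z = 0$ and the stability of the quadratic equation $v + z + m^2 v = \text{(error)}$, whose stability constant is $\sim 1/\sqrt{\abs{z^2-4}}\sim 1/\sqrt{\abs{\Im m}+d}$ inside and near the bulk, and $\sim 1/d$ for $d\ge 1$. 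This is exactly what produces the two cases $d<1$ and $d\ge 1$ and the $\abs{\Im m}$-dependence in the isotropic bound.

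The core estimate is then a high-moment bound on the fluctuating error $D$. For the averaged law I would bound $\E\abs{\braket{(G-m)}}^{2p}$ via a minimalistic cumulant expansion of $\E\braket{WG}\braket{G-m}^{p-1}\overline{\braket{G-m}}^p$, using the Ward identity $GG^* = \Im G/\eta$ to reduce all longer resolvent chains generated by the expansion back to $\braket{\Im G}$, which is itself controlled by $\abs{\Im m}+\Lambda^{\av}$. The second-order cumulant term is the dominant one and yields the self-consistent structure; third- and higher-order cumulants carry extra factors of $N^{-1/2}$ and are controlled by the moment assumption~\eqref{mom}. The isotropic bound is analogous, expanding $\E\abs{\braket{\vx,(G-m)\vy}}^{2p}$, where now the Ward identity produces $\braket{\vx, \Im G\,\vx}/\eta \sim (\abs{\Im m}+\abs{\Im m}^{1/2})/\eta$-type quantities, which explains why the leading isotropic scale is $\sqrt{\abs{\Im m}/(N\eta)}$ rather than $1/\sqrt{N\eta}$. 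The regime $d\ge 1$ is easier since then $\norm{G}\lesssim 1/d$ deterministically (no Ward identity needed) and a crude bound suffices to get $1/(Nd^2)$ and $1/(\sqrt N d^2)$.

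The standard way to close the argument is a continuity (bootstrap) in $\eta$: one establishes the estimate for large $\eta$ (say $\eta\sim 1$, where everything is trivial by a Neumann-series/norm bound), shows that the gap between a weak a priori bound and the claimed bound cannot occur by the moment estimate together with the stability of the Dyson equation, and then decreases $\eta$ in small steps using Lipschitz continuity of $G$ in $z$ on a fine grid plus a union bound. This self-improving scheme, run to the scale $\eta\gg 1/N$, converts the moment bounds into the stated stochastic domination statements.

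The main obstacle I expect is the honest estimate of the high-moment error term $\E\abs{D}^{2p}$ with the \emph{optimal} power counting — i.e.\ making sure that after the cumulant expansion every generated term is bounded so that the Ward-reduced chains contribute exactly the factors $1/(N\eta)$ (averaged) and $\sqrt{\abs{\Im m}/(N\eta)}$ (isotropic), with no loss, uniformly down to $\eta\gg 1/N$ and uniformly in the spectral parameter up to $\abs z\le N^{100}$. Keeping the $\abs{\Im m}$-dependence sharp in the isotropic case (so that the bound genuinely improves away from the bulk and near the edge) requires care in tracking which resolvents in each expanded monomial can be paired into a Ward identity versus merely norm-bounded. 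The $d\ge1$ regime and the large-$\abs z$ truncation are routine by comparison.
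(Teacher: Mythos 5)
The paper does not actually give a proof of Theorem~\ref{thm sG local law}: immediately after the statement it is cited from the literature, with \cite{MR3941370} supplying the full statement including both the $d<1$ and $d\ge1$ regimes, and \cite{MR2481753,MR2981427,MR3103909} supplying the earlier tracial, entry-wise and isotropic proofs in the interesting $d<1$ regime. The remark following Theorem~\ref{thm local law} explicitly says that the authors \emph{could} reprove \cref{thm sG local law} by their new method but ``refrain from reproving'' it and instead assume it as an input. So there is no in-paper proof for me to compare your proposal against, and no gap in the paper that your proposal would need to fill.

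With that caveat, your outline is the standard and essentially correct route to the result, and it is consistent in spirit with the ``recursive moment estimate'' philosophy of \cite{MR3800840} that the present paper adopts for the multi-resolvent case: a cumulant expansion of $\E[WG\cdot(\cdots)]$ produces the approximate Dyson equation $\underline{WG}=I+zG+\braket{G}G$; one linearizes $\braket{G}-m$ around the fixed point of $m^2+zm+1=0$, with the stability constant $1/\abs{1-m^2}\sim 1/\sqrt{\abs{z^2-4}}$ producing the edge-sensitive scale in the isotropic bound; high moments of the fluctuating term are reduced via the Ward identity $GG^*=\Im G/\eta$; and a bootstrap in $\eta$ together with a grid argument converts the moment bounds into stochastic domination down to $\eta\gg 1/N$. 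Two technical points worth naming, both of which you partially anticipate: (i) near the spectral edges the linearized stability degenerates (cubic, not quadratic, self-consistent equation), so closing $\abs{\braket{G-m}}\prec 1/(N\eta)$ at optimal scale there requires either a direct analysis of the cubic equation or a fluctuation-averaging improvement, and a naive linearization loses a power; (ii) obtaining $\sqrt{\abs{\Im m}}$ rather than an absolute constant in the isotropic error requires tracking exactly which resolvents admit a Ward pairing in each expanded term. These are matters of executing the program rather than flaws in it, so your proposal is sound as an outline even though it has no counterpart inside the paper.
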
 
\cref{thm sG local law} in this form, including both the $d<1$ and
$d\ge 1$ regimes, can be found in~\cite[Theorem 2.1]{MR3941370} even for much more general
random matrix ensembles allowing for correlations. Its tracial version and its special entry-wise version
(where $\vx, \vy$ are coordinate vectors) have already been established in~\cite[Lemma B.1]{MR3949949}.
However, the really interesting $d<1$ regime has been proven much earlier: tracial version in~\cite{MR2481753},
entry-wise version in~\cite{MR2981427}  and isotropic version~\cite{MR3103909}; with many other refinements
and generalisations mentioned in the introduction. The $d\ge 1$ regime, sometimes called the {\it global law},
 is much easier and most papers on the local law  naturally  excluded it for convenience albeit they could have
 handled this regime, too, with some minor extra effort. 
 
In case of several spectral parameters $z_1, z_2,\ldots$ we use the abbreviation $G_i:= G(z_i)$. 
For our main result we recall from~\cite{MR4372147} that the deterministic approximation to \(G_1B_1 G_2 \cdots G_{k-1}B_{k-1}G_k\) for arbitrary deterministic matrices \(B_1,\ldots,B_{k-1}\) is given by 
\begin{equation}\label{eq M def}
    M(z_1,B_1,\ldots,B_{k-1},z_k) := \sum_{\pi\in\NCP[k]}\mathrm{pTr}_{K(\pi)}(B_1,\ldots,B_{k-1}) \prod_{B\in\pi} m_\circ[B],
\end{equation}
where \(\NCP[k]\) denotes the non-crossing partitions of the set \([k]=\set{1,\ldots,k}\) arranged in increasing order, and \(K(\pi)\) denotes the Kreweras complement of \(\pi\) \cite{MR309747}, e.g.\ \(K(\set{134|2|5|6})=\set{12|3|456}\). Moreover, the partial trace \(\mathrm{pTr}_{\pi}\) with respect to a partition \(\pi\) is given by 
\begin{equation}\label{eq pTr def}
    \mathrm{pTr}_\pi(B_1,\ldots,B_{k-1}) = \prod_{B\in\pi\setminus B(k)}\braket*{\prod_{j\in B}B_j}\prod_{j\in B(k)\setminus\set{k}} B_j,
\end{equation} 
with \(B(k)\in\pi\) denoting the unique block containing \( k\). Finally for any subset \(B\subset [k]\) we define \(m[B]:=m_\mathrm{sc}[z_B]\) as the iterated divided difference of \(m_\mathrm{sc}\) evaluated in \(z_B:=\set{z_i\given i\in B}\), and by \(m_\circ[\cdot ]\) denote the free-cumulant transform of \(m[\cdot]\) which is defined implicitly by the relation 
\begin{equation}
    \label{eq:freecumulant}
    m[B] = \sum_{\pi\in\NCP(B)} \prod_{B'\in\pi} m_\circ[B'],  \qquad \forall B\subset [k],
\end{equation}
e.g.\ \(m_\circ[i,j]=m[\set{i,j}]-m[\set{i}]m[\set{j}]\). We note that the iterated divided difference admits the representation 
\begin{equation}\label{msc dd}
    m_\mathrm{sc}[\set{z_i\given i \in B}] = \int_{-2}^2\rho_\mathrm{sc}(x)\prod_{i\in B}\frac{1}{(x-z_i)}\dif x.
\end{equation}
For more details on these notations, see~\cite[Section 2]{MR4372147}.
As an example we have 
\begin{equation}
    \begin{split}
        M(z_1,B_1,z_2) &= \braket{B_1}(m_\mathrm{sc}[z_1,z_2]-m_\mathrm{sc}(z_1)m_\mathrm{sc}(z_2)) + B_1 m_\mathrm{sc}(z_1)m_\mathrm{sc}(z_2) \\
        &= \frac{\braket{B_1}}{2\pi}\int_{-2}^2\frac{\sqrt{4-x^2}}{(x-z_1)(x-z_2)}\dif x + (B_1-\braket{B_1}) m_\mathrm{sc}(z_1)m_\mathrm{sc}(z_2)
    \end{split}
\end{equation}
for any matrix \(B_1\) and 
\begin{equation}
    \begin{split}
        & M(z_1,A_1,z_2,A_2,z_3) \\
        &\quad = \braket{A_1 A_2} (m_\mathrm{sc}[z_1,z_3]-m_\mathrm{sc}(z_1)m_\mathrm{sc}(z_3))m_\mathrm{sc}(z_2)+ A_1 A_2 m_\mathrm{sc}(z_1)m_\mathrm{sc}(z_2)m_\mathrm{sc}(z_3)\\
        &M(z_1,A_1,z_2,A_2,z_3,A_3,z_4) \\
        &\quad= \braket{A_1A_2A_3}(m_\mathrm{sc}[z_1,z_4]-m_\mathrm{sc}(z_1)m_\mathrm{sc}(z_4))m_\mathrm{sc}(z_2)m_\mathrm{sc}(z_3) \\
        &\qquad + A_1A_2A_3 m_\mathrm{sc}(z_1)m_\mathrm{sc}(z_2)m_\mathrm{sc}(z_3)m_\mathrm{sc}(z_4)\\
        &\qquad +A_1\braket{A_2A_3}(m_\mathrm{sc}[z_2,z_4]-m_\mathrm{sc}(z_2)m_\mathrm{sc}(z_4))m_\mathrm{sc}(z_1)m_\mathrm{sc}(z_3)\\
        &\qquad+A_3\braket{A_1A_2}(m_\mathrm{sc}[z_1,z_3]-m_\mathrm{sc}(z_1)m_\mathrm{sc}(z_3))m_\mathrm{sc}(z_2)m_\mathrm{sc}(z_4)
    \end{split}
\end{equation}
for traceless matrices \(A_1,A_2,A_3\). 
In the sequel we  follow the notational convention that general deterministic matrices are denoted by $B$, while
the letter $A$ is used to denote
explicitly traceless  matrices.

We now give bounds on the size of the deterministic term $M(z_1,B_1\ldots,z_{k},B_k,z_k)$. The proof of this lemma is presented in Appendix~\ref{sec:addproofs}.

\begin{lemma}\label{lemma size M}
    If \(a\) out of the \(k\) matrices \(B_1,\ldots,B_k\) with \(\norm{B_i}\lesssim1\) are traceless,
    i.e. \(\braket{B_j}=0\) holds for $a$ different indices (for some \(0\le a\le k\)), then it holds that 
    \begin{equation}\label{Msize}
        \begin{split}
            \norm{M(z_1,B_1\ldots,z_{k},B_k,z_{k+1})}&\lesssim  \begin{cases} \frac{1}{\eta^{k-\lceil a/2\rceil}}  &  d\le 1 \\ 
              \frac{1}{d^{k+1}}  &  d\ge 1 ,
              \end{cases} 
            \\ 
            \abs{\braket{M(z_1,B_1,\ldots,z_{k-1},B_{k-1},z_k)B_k}}&\lesssim 
              \begin{cases} \frac{1}{\eta^{k-1-\lceil a/2\rceil}}  &  d\le 1 \\ 
              \frac{1}{d^{k}}  &  d\ge 1 ,
              \end{cases} 
        \end{split}
    \end{equation}
    with $\eta:= \min_j |\Im z_j|$ and $d:= \min_j \mathrm{dist}(z_j, [-2,2])$. 
   Generically, both bounds are  sharp 
    when not all \(\Im z_i\) have the same sign. 
\end{lemma}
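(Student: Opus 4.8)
The plan is to read the estimates off directly from the explicit non-crossing-partition expansion~\eqref{eq M def} of \(M\). The argument has two independent ingredients: an analytic part — uniform size bounds for the divided differences \(m_\mathrm{sc}[z_B]\) and the free cumulants \(m_\circ[B]\) entering~\eqref{eq M def} — and a combinatorial part, which exploits that most of the partial-trace coefficients in~\eqref{eq M def} vanish identically as soon as some of the \(B_i\) are traceless. I would treat the regimes \(d\le1\) and \(d\ge1\) separately, the latter being essentially trivial.

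\emph{Analytic estimates.} Starting from the integral representation~\eqref{msc dd}, I would first prove that, uniformly in the relative signs of the \(\Im z_i\),
\[
  \abs{m_\mathrm{sc}[z_B]}\lesssim\begin{cases}\eta^{-(\abs{B}-1)}, & d\le1,\\ d^{-\abs{B}}, & d\ge1.\end{cases}
\]
For \(d\ge1\) this is immediate since \(\abs{x-z_i}\ge d\) on \([-2,2]\). For \(d\le1\) and \(\abs{B}\ge2\) I would single out two indices \(i_1,i_2\in B\), bound the remaining \(\abs{B}-2\) factors by \(\abs{x-z_i}^{-1}\le\eta^{-1}\), and estimate \(\int_{-2}^2\rho_\mathrm{sc}(x)\abs{x-z_{i_1}}^{-1}\abs{x-z_{i_2}}^{-1}\dif x\lesssim\eta^{-1}\) via Cauchy--Schwarz together with the identity \(\int_{-2}^2\rho_\mathrm{sc}(x)\abs{x-z}^{-2}\dif x=\Im m_\mathrm{sc}(z)/\abs{\Im z}\lesssim\eta^{-1}\) (using \(\abs{m_\mathrm{sc}}\le1\)). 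The same bounds then transfer to the free cumulants, \(\abs{m_\circ[B]}\lesssim\eta^{-(\abs{B}-1)}\) (resp.\ \(\lesssim d^{-\abs{B}}\)), by inducting on \(\abs{B}\) and M\"obius-inverting~\eqref{eq:freecumulant}: every non-crossing refinement \(\pi\) of \(B\) contributes \(\prod_{B'\in\pi}\abs{m_\circ[B']}\lesssim\eta^{-(\abs{B}-\abs{\pi})}\le\eta^{-(\abs{B}-1)}\), and there are only Catalan-many of them.

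\emph{Combinatorics.} For \(\norm{M(z_1,B_1,\ldots,z_k,B_k,z_{k+1})}\) I would expand via~\eqref{eq M def} over \(\pi\in\NCP[k+1]\). Since all \(\norm{B_i}\lesssim1\), each normalized trace and the terminal matrix product in~\eqref{eq pTr def} are \(\landauO{1}\), so \(\norm{\mathrm{pTr}_{K(\pi)}(B_1,\ldots,B_k)}\lesssim1\); but crucially this coefficient vanishes whenever a traceless \(B_i\) occurs as a singleton block \(\set{i}\) of \(K(\pi)\), since then the factor \(\braket{B_i}=0\) appears. Hence only those \(\pi\) survive for which all \(a\) traceless indices lie in blocks of \(K(\pi)\) of size \(\ge2\). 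For such \(\pi\), the Kreweras identity \(\abs{\pi}+\abs{K(\pi)}=(k+1)+1\) combined with the analytic bound yields \(\prod_{B\in\pi}\abs{m_\circ[B]}\lesssim\eta^{-\sum_{B\in\pi}(\abs{B}-1)}=\eta^{-(\abs{K(\pi)}-1)}\). I would then invoke the elementary count: if all of \(a\) marked elements of an \(n\)-element set lie in blocks of size \(\ge2\) of a partition \(\tau\), then \(\abs{\tau}\le n-\lceil a/2\rceil\) (with \(q\) the number of \(\ge2\)-blocks one has \(n\ge\max(2q,a)+(\abs{\tau}-q)\), and \(\max(q,a-q)\ge\lceil a/2\rceil\)). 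With \(n=k+1\) this gives \(\abs{K(\pi)}-1\le k-\lceil a/2\rceil\), and summing the \(\landauO{1}\)-many surviving terms proves the first bound for \(d\le1\); for \(d\ge1\) it follows at once from \(\prod_{B\in\pi}\abs{m_\circ[B]}\lesssim d^{-(k+1)}\). The closed-loop bound \(\abs{\braket{M(z_1,B_1,\ldots,z_{k-1},B_{k-1},z_k)B_k}}\) is handled identically once one notes that tracing against \(B_k\) merges \(B_k\) into the terminal block of \(K(\pi)\), turning~\eqref{eq pTr def} into a product of plain normalized traces that again vanishes if any traceless \(B_i\) — now including \(i=k\) — sits in a singleton block; with \(\abs{\pi}+\abs{K(\pi)}=k+1\) and the count for \(n=k\) one then gets \(\abs{K(\pi)}-1\le k-1-\lceil a/2\rceil\), and \(\prod_{B\in\pi}\abs{m_\circ[B]}\lesssim d^{-k}\) for \(d\ge1\).

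\emph{Main difficulty and sharpness.} There is no genuine obstacle; the only point requiring care is that the divided-difference estimate must hold irrespective of the signs of the \(\Im z_i\). When all signs agree there is additional cancellation — which is precisely why the stated bounds are only generically sharp in the mixed-sign case — but for the upper bounds the crude Cauchy--Schwarz estimate is enough. The conceptual heart is the observation that the ``many-block'' partitions, which would otherwise dominate with the \(a=0\) rate \(\eta^{-k}\), are exactly the ones annihilated by traceless-ness, and that Kreweras duality turns ``few blocks in \(\pi\)'' into ``many blocks in \(K(\pi)\)'', making the count transparent. For the sharpness claim in the mixed-sign regime I would exhibit one surviving partition attaining equality in the count (pairing the traceless indices into two-element blocks of \(K(\pi)\)), check that the relevant \(m_\circ\)-factors are genuinely of the stated order there, and that for generic choices of the \(B_i\) no cancellation with the remaining terms occurs.
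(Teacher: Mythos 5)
Your proposal is correct and follows essentially the same route as the paper's proof: bound the divided differences $m_\mathrm{sc}[z_B]$ from the integral representation~\eqref{msc dd}, transfer this to the free cumulants $m_\circ[B]$ via M\"obius inversion as in~\eqref{m circ}, and observe that $\pTr_{K(\pi)}$ vanishes unless every traceless index sits in a block of $K(\pi)$ of size $\ge 2$, forcing $\abs{K(\pi)}\le n-\lceil a/2\rceil$ so that Kreweras duality gives the claimed $\eta$-power. You spell out two points the paper leaves implicit — the Cauchy--Schwarz argument with $\int\rho_\mathrm{sc}\abs{x-z}^{-2}\dif x=\abs{\Im m_\mathrm{sc}(z)}/\abs{\Im z}$, and the elementary block count $\abs{\tau}\le n-\max(q,a-q)\le n-\lceil a/2\rceil$ — but the substance is the same.
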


\begin{theorem}[Multi-resolvent local law]\label{thm local law} 
    Fix \(\epsilon>0\), let \(k\ge 1\) and consider \(z_1,\ldots,z_{k+1}\in\C\) with
    $\max_j |z_j|\le N^{100}$, \(  \min_j\abs{\Im z_j}\ge N^{-1+\epsilon}\), 
    and let \(B_1,\ldots,B_k\) be deterministic matrices of norm \(\norm{B_j}\lesssim 1\), such that $a$  of 
    them are traceless for some \(0\le a\le k\).  Let $\eta:= \min_j\abs{\Im z_j}$ and  $d:= \min_j \mbox{dist}(z_j, [-2,2])$. 
    Then for arbitrary deterministic vectors \(\vx,\vy\) of norm \(\norm{\vx}+\norm{\vy}\lesssim 1\) 
     we have the   \emph{optimal averaged local law}
       \begin{subequations}\label{eq local laws}
        \begin{equation}\label{loc1}
            \abs{\braket{G_1B_1\cdots G_k B_k-M(z_1,B_1,\ldots,B_{k-1},z_k)B_k}} \prec
             \begin{cases} \frac{1}{N\eta^{k-a/2}}  &  d\le 1 \\ 
              \frac{1}{Nd^{k+1}}  &  d\ge 1 ,
              \end{cases}
        \end{equation}
        and the \emph{optimal isotropic local law}
        \begin{equation}
            \label{loc2}
            \abs*{\braket*{\vx,\Bigl(G_1B_1 \cdots G_k B_k G_{k+1}-M(z_1,B_1,\ldots,B_{k},z_{k+1})\Bigr)\vy}} \prec
             \begin{cases}  \frac{1}{\sqrt{N}\eta^{k-a/2+1/2}} &  d\le 1 \\ 
              \frac{1}{\sqrt{N} d^{k+2}}  &  d\ge 1,
              \end{cases}
        \end{equation}
    \end{subequations}
    where $G_j:=G(z_j)$. %
\end{theorem}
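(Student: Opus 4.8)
The plan is to implement the strategy sketched in Section~\ref{sec:rule}: introduce normalized deviation functionals, derive a self-improving coupled hierarchy of \emph{master inequalities} for them via a minimalistic cumulant expansion, and then close the hierarchy in two bootstrap stages. First I would dispose of the easy global regime $d\ge 1$, where $\norm{G_i}\lesssim 1/d$ and there is no small-scale cancellation to track; here the same expansion (or an even softer argument) gives the $d$-powers in~\eqref{eq local laws} with room to spare. In the bulk regime $d\le 1$ I would define, for every $k\ge1$, every admissible tuple of spectral parameters, and every choice of $B_1,\dots,B_k$ with a prescribed number $a$ of traceless ones, the quantities $\Psi^\av_k$ and $\Psi^\iso_k$ as the deviations $\abs{\braket{G_1B_1\cdots G_kB_k-M_kB_k}}$ and $\abs{\braket{\vx,(G_1B_1\cdots G_{k+1}-M)\vy}}$ rescaled by the reciprocal of the claimed error so that~\eqref{loc1}--\eqref{loc2} become exactly $\Psi^\av_k\prec1$ and $\Psi^\iso_k\prec1$; the consistent bookkeeping of the traceless count $a$ through the expansion is the crucial subtlety here.

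For the master inequalities I would start from the approximate Dyson equation for a chain: iterating $G=m-m\,\underline{WG}-m\braket{(G-m)\cdots}\cdots$, one finds that $\braket{(GB)^k-M_kB}$ equals, after inverting the stability operator $\mathcal B[\cdot]=1-m^2\braket{\cdot}$ (whose relevant inverse has size $\eta^{-1}$ on the identity component but only $\eta^{-1/2}$ on the traceless component — this is where the $\sqrt\eta$-rule for $M$ enters, and it is the easy half), a sum of explicitly computable $M$-terms plus a renormalized fluctuation term $\braket{\underline{WG_1B_1\cdots}}$. Applying a \emph{minimalistic} cumulant expansion (in the spirit of the recursive moment estimates of~\cite{MR3800840}) to a high moment of this fluctuation, the second and third cumulants produce the leading terms — computable $M$-factors times shorter or equal-length $\Psi$'s — while the $(p{+}2)$-nd cumulant, of order $N^{-p/2}$, produces chains of length up to $\sim k+p$ carrying extra inverse powers of $\sqrt N$. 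The output is Proposition~\ref{prop Psi ineq}: a nonlinear system of the schematic form $\Psi^\av_k\prec 1+(\text{lower-}k\ \Psi\text{'s})+\frac1{N\eta}F(\Psi_{\le 2k})$ and $\Psi^\iso_k\prec1+\cdots+\frac1{\sqrt{N\eta}}F(\Psi_{\le 2k})$, still involving chains twice as long as the one estimated.

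To truncate I would prove the doubling Lemma~\ref{lemma doubling}: in the spectral decomposition of $W$ one splits the cyclic (resp.\ open) product of length $2k$ at its midpoint and applies a single Cauchy--Schwarz step together with Ward identities $GG^\ast=\Im G/\eta$ and the single-resolvent law of Theorem~\ref{thm sG local law}, so that a length-$2k$ chain is controlled by a product of two length-$k$ chains at a cost of only $(N\eta)^2$ in the averaged and $N\eta$ in the isotropic case, rather than the $(N\eta)^{k-1}$ lost in the iterated Schwarz~\eqref{4G} of the old approach. Feeding this into Proposition~\ref{prop Psi ineq} yields a genuinely closed nonlinear system, which in the \textbf{first bootstrap} I would solve for the a priori bound $\Psi^{\av,\iso}_k\prec\sqrt{N\eta}$ valid for \emph{all} $k$: a step-two induction on $k$ with base cases $k=1,2$ from Theorem~\ref{thm sG local law} and a direct two-resolvent computation, successively improving the exponent of $N\eta$ until it stabilizes at $\tfrac12$, the self-consistent fixed point of the doubling loss.

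In the \textbf{second bootstrap} I would revisit Proposition~\ref{prop Psi ineq} but now \emph{not} use doubling to reduce $\Psi_{2k}$ — instead insert $\Psi_{2k}\prec\sqrt{N\eta}$. Then the coupling becomes $\frac1{N\eta}\cdot\sqrt{N\eta}=\frac1{\sqrt{N\eta}}\ll1$ for $\eta\ge N^{-1+\epsilon}$, and the higher-cumulant terms, each carrying an extra $N^{-1/2}$, multiply the $\sqrt{N\eta}$ bounds and stay $\prec1$; combined with the computable $M$-terms (which are $O(1)$ precisely because the $\sqrt\eta$-rule for $M$ was built into the normalization) and an outer induction on $k$ absorbing the genuinely lower-order $\Psi_j$, $j<k$, the system closes at the optimal level $\Psi^{\av,\iso}_k\prec1$, proving~\eqref{eq local laws}; the averaged and isotropic statements come out together since their master inequalities are mutually coupled. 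I expect this last step to be the main obstacle: unlike the single Gronwall inequality~\eqref{Grom} of~\cite{MR4334253}, one must balance three competing effects along the expansion — the count $a$ of traceless matrices, which can be \emph{lost} when a cumulant turns some $B$ into $B^2$ (the very difficulty that forced suboptimal errors in~\cite{MR4334253}, and which the minimalistic expansion must be organized to avoid), the $\eta^{-1}$ versus $\eta^{-1/2}$ dichotomy of the stability operator, and the $N\eta$-accounting that must exactly absorb the $\sqrt{N\eta}$ a priori loss against the cumulant smallness. Making the induction hypotheses, the precise form of the master inequalities, and the traceless-counting all mesh is where the real work lies; the remaining inputs — single-resolvent local law, Ward identities, and the $M$-bounds of Lemma~\ref{lemma size M} — are by now standard.
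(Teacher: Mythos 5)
Your proposal captures the paper's high-level architecture accurately — rescaled deviation quantities $\Psi^{\av/\iso}_k$, a minimalistic ``recursive moment'' cumulant expansion producing a coupled hierarchy of master inequalities, a reduction lemma that splits a length-$2k$ chain into two length-$k$ chains at a cost of only $(N\eta)^2$ (resp.\ $N\eta$), and a two-stage bootstrap ($\Psi_k\prec\sqrt{N\eta}$ first, then $\Psi_k\prec1$). That much is essentially the paper's route. However, the part you yourself flag as ``where the real work lies'' — keeping the traceless count coherent through the expansion — is left unresolved, and you propose to resolve it in a way that does not match the paper and would be harder to carry out. You suggest defining $\Psi^{\av/\iso}_k$ for chains with an arbitrary number $a$ of traceless matrices and dragging $a$ as an extra index through the hierarchy. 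The paper instead defines $\Psi^{\av/\iso}_k$ \emph{only} for chains in which every interlaced matrix is traceless (so the index $k$ \emph{is} the traceless count) and supplies a separate reduction device, \cref{G^2 lemma}: any chain with general $B_i$'s is decomposed, via $B_i=\braket{B_i}+\mathring B_i$ plus resolvent identities and a contour representation, into chains with only traceless $A_i$'s. This is also exactly what neutralizes the $A\mapsto A^2$ loss-of-tracelessness issue you raise: when the expansion creates a non-traceless $A^2$ one simply re-splits it and invokes \cref{G^2 lemma} again, as in the estimates following~\eqref{eq av un repl0}. Without something playing the role of \cref{G^2 lemma} your master inequalities would not close in the form you describe.

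Two further points. First, your stability-operator heuristic — that $1-m^2\braket{\cdot}$ has a $\eta^{-1}$-sized inverse on the identity component and $\eta^{-1/2}$ on the traceless component — is not the mechanism for Wigner matrices and does not appear in the proof: the $\sqrt\eta$-rule for $M$ is a purely combinatorial fact (\cref{lemma size M}): in the non-crossing partition formula~\eqref{eq M def}, any block of the Kreweras complement that is a singleton at a traceless index vanishes, forcing $\abs{\pi}\ge1+\lceil a/2\rceil$ and hence $\eta^{a/2}$ gain; the ``stability'' in the expansion is the harmless scalar factor $(1+O_\prec((N\eta)^{-1}))$ from the single-resolvent law. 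Second, the reduction/doubling lemma requires more than a bare Cauchy--Schwarz in the spectral decomposition: the split naturally produces chains with $\abs{G}$ rather than $G$, so one needs an auxiliary version of the multi-resolvent local law for $\abs{G}$-chains (\cref{absG lemma}), obtained from a Stieltjes-type integral representation of $\abs{G}$; the proposal does not anticipate this. You also need the deterministic recursion for $M$ (\cref{lemma M rec}) to match the explicit main terms produced by the Dyson-equation expansion against $M_k$ — this algebraic identity is what makes the ``computable $M$-terms'' actually cancel, and it is a nontrivial combinatorial fact about the Kreweras-complement formula.
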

\begin{remark}~
    \begin{enumerate}[label=(\alph*)]
        \item In the regime \(d\le 1\)  the error terms in~\cref{eq local laws} are generically smaller,
        by a factor of \(1/(N\eta)\) and \(1/\sqrt{N\eta}\), respectively,  than the leading terms
         \(\braket{\vx,M_{k+1}\vy}\) and \(\braket{M_kB_k}\), with the shorthand notation
          \(M_j:=M(z_1,B_1,\ldots,B_{j-1},z_j)\), c.f.~\cref{lemma size M}. 
          For \(d\ge 1\) the error terms are smaller by a factor \(1/(Nd)\) and \(1/(\sqrt{N}d)\), respectively. 
        \item The estimates~\cref{loc1,loc2} are optimal. This can  be easily  seen from the proof since in the Gaussian 
        case the leading term of the variance~\cref{av Gaussian bound,wt Xi iso} is estimated 
        sharply due to the optimality of~\cref{lemma size M}. 
        \item The  really interesting part of        \cref{thm local law} is the
       \(d\le 1\) regime, 
         since the effect of traceless matrices is only relevant when at least some of the spectral parameters is
         close to the limiting spectrum $[-2,2]$. 
         In fact,  for \(d\ge 1\) very similar bounds were already given in~\cite[Theorem 3.4]{MR4372147}.
         However, the proof in~\cite{MR4372147}  relied on the fairly involved 
         diagrammatic expansion used in~\cite[Theorem 4.1]{MR4334253}. With our new method, we can  give a much shorter 
         alternative proof for this regime as well; this will be explained separately in Appendix~\ref{app:largeeta}.
        \item  With our new method we could also present a simplified proof of the single resolvent local law as
        stated in~\cref{thm sG local law}.    In this way we could circumvent citing the quite involved~\cite[Theorem 2.1]{MR3941370}
           that was designed to handle much more general ensembles than Wigner. 
           The proof of the easier $d\ge1$  regime is especially simple in this new way, which would eliminate
           the main reason for citing~\cite{MR3941370} instead of earlier and simpler single resolvent local law proofs for $d\le 1$. 
           For the sake of brevity we refrain from reproving~\cref{thm sG local law},  
        and instead  we assume it as an input within the proof of~\cref{thm local law}.   
    \end{enumerate}
\end{remark}

By Theorem~\ref{thm local law} we will also conclude the following corollary.
\begin{corollary}\label{cor:HS}
    Let $k\ge 3$, let $B_1,\dots,B_k$ be deterministic matrices with $\norm{B_i}\lesssim 1$, such that $a$ of them
    are traceless for some $0\le a \le k$.
     Let $f_1,\dots,f_k$ be Sobolev functions $f_i\in H^{\lceil k-a/2\rceil}(\mathbf{R})$ such that 
    $\norm{f_i}_{L^\infty}\lesssim 1$. 
    Then for any deterministic vectors ${\bm x}, {\bm y}$ with $\norm{\bm x}+\norm{\bm y}\lesssim 1$ we have
    \begin{align}\nonumber
        \braket{f_1(W)B_1 \ldots f_k(W)B_k } &= \sum_{\pi\in\NCP[k]}\braket{B_1,\ldots,B_k}_{K(\pi)}\prod_{B\in \pi} \mathrm{sc}_\circ[B]  
        +\mathcal{O}_\prec\left(\frac{\max_i\norm{f_i}_{H^{\lceil k-a/2\rceil}}}{N}\right)\\\label{eq:niceintform}
        \braket{\vx,f_1(W)B_1 \ldots f_k(W)\vy } &= \sum_{\pi\in\NCP[k]}\braket{\vx,\mathrm{pTr}_{K(\pi)}(B_1,\ldots, B_{k-1})\vy} 
        \prod_{B\in \pi} \mathrm{sc}_\circ[B]  \\\nonumber
        &\qquad + \mathcal{O}_\prec\left(\frac{\max_i\norm{f_i}_{H^{\lceil k-a/2\rceil}}}{N^{1/2}}\right),
    \end{align}
    where \(\mathrm{sc}_\circ\) is the free cumulant function from \eqref{eq:freecumulant} of \(\mathrm{sc}[i_1,\ldots,i_n]:=\braket{f_{i_1}f_{i_2}\cdots f_{i_n}}_\mathrm{sc}\), with $\braket{f}_{\mathrm{sc}}:=\int f(x)\rho_{\mathrm{sc}}(x)\dif x$. 
    For \(k=2\) and $a=0,1$ exactly the same result holds. In the remaining case $k=2$, $a=2$ \eqref{eq:niceintform} also
     holds with \(f_i\in H^{\lceil k-a/2\rceil}\) and \(\norm{\cdot}_{H^{\lceil k-a/2\rceil}}\) 
     replaced by \(f_i\in H^2\) and \(\norm{\cdot}_{H^2}\), respectively. 
    The results  in \eqref{eq:niceintform} can be extended straightforwardly
     to include several independent Wigner matrices (see \cite[Remark 2.13]{MR4372147}).
\end{corollary}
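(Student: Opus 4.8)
The plan is to derive \Cref{cor:HS} from the multi-resolvent local law \Cref{thm local law} by the Helffer--Sj\"ostrand formula, which expresses $f_i(W)$ as a contour-type integral of resolvents against the quasi-analytic extension of $f_i$. Concretely, writing $\widetilde f_i$ for an almost-analytic extension of $f_i$ of order $\ell:=\lceil k-a/2\rceil$, one has $f_i(W)=\frac{1}{\pi}\int_{\C}\frac{\partial_{\bar z}\widetilde f_i(z)}{W-z}\,\dif^2 z$, and inserting this for every factor turns the left-hand side of \eqref{eq:niceintform} into a $k$-fold integral
\[
\braket{f_1(W)B_1\cdots f_k(W)B_k}=\frac{1}{\pi^k}\int_{\C^k}\Bigl(\prod_{i=1}^k\partial_{\bar z_i}\widetilde f_i(z_i)\Bigr)\braket{G(z_1)B_1\cdots G(z_k)B_k}\,\dif^2 z_1\cdots\dif^2 z_k.
\]
I would then substitute $\braket{G_1B_1\cdots G_kB_k}=\braket{M_kB_k}+(\text{error})$, where the error is controlled by \eqref{loc1}. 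The main-term contribution, after integrating, must be identified with the free-probabilistic expression $\sum_\pi\braket{B_1,\dots,B_k}_{K(\pi)}\prod_{B\in\pi}\mathrm{sc}_\circ[B]$; this follows because $M$ from \eqref{eq M def} is built blockwise from divided differences $m_\mathrm{sc}[z_B]$, and the Helffer--Sj\"ostrand integral of $\prod_{i\in B}\partial_{\bar z_i}\widetilde f_i(z_i)$ against $m_\mathrm{sc}[z_B]=\int\rho_\mathrm{sc}(x)\prod_{i\in B}(x-z_i)^{-1}\dif x$ reproduces precisely $\braket{\prod_{i\in B}f_i}_\mathrm{sc}=\mathrm{sc}[B]$ by Fubini and the single-variable Helffer--Sj\"ostrand identity. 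Hence the block structure of $M$ transfers verbatim to the block structure of the claimed formula, with $m_\circ[\cdot]\leadsto\mathrm{sc}_\circ[\cdot]$ and $\mathrm{pTr}$ preserved.

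The key estimate is then to show the error integral is $\mathcal{O}_\prec(N^{-1}\max_i\norm{f_i}_{H^\ell})$ in the averaged case and $\mathcal{O}_\prec(N^{-1/2}\max_i\norm{f_i}_{H^\ell})$ isotropically. Here the support of $\partial_{\bar z_i}\widetilde f_i$ concentrates near $\R$, with $|\partial_{\bar z_i}\widetilde f_i(z_i)|\lesssim |\Im z_i|^{\ell-1}|f_i^{(\ell)}|$ (roughly), and one splits the $z_i$-integration into the bulk region $|\Im z_i|\le 1$ and the tail $|\Im z_i|\ge 1$. In the bulk region, using \eqref{loc1} the error contribution is bounded by
\[
\frac{1}{N}\int \Bigl(\prod_i |\partial_{\bar z_i}\widetilde f_i(z_i)|\Bigr)\frac{1}{\eta^{\,k-a/2}}\,\dif^2z_1\cdots\dif^2 z_k,\qquad \eta=\min_j|\Im z_j|,
\]
and the point is that the $\ell$th-order almost-analytic extension supplies, for each of the $k$ factors, enough powers of $|\Im z_i|$ to absorb the $\eta^{-(k-a/2)}=\eta^{-\ell}$ singularity and render the integral finite with the stated Sobolev norm. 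More carefully, one should first integrate out those $\lceil a/2\rceil$ variables that are \emph{not} forced to produce a $1/\eta$ factor in $M$ (they only meet the mild divided-difference integrals), and then the remaining $\ell$ variables each contribute one factor of $|\Im z_i|$ to match the $\eta^{-\ell}$ blow-up, up to the usual $\log$ which is harmless under $\prec$. The tail region $|\Im z|\ge1$ is easier: there $d\ge1$ and the $\frac{1}{Nd^{k+1}}$ (resp. $\frac{1}{\sqrt N d^{k+2}}$) bound is integrable against the rapidly decaying $\partial_{\bar z}\widetilde f$. The isotropic case is analogous using \eqref{loc2}, with the chain having one more resolvent and the error exponent $\eta^{-(k-a/2+1/2)}$, which is consistent with requiring the same Sobolev order $\lceil k-a/2\rceil$ since the extra half-power is absorbed by the endpoint resolvent's $\Im z$ weight.

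For the low-regularity cases one just checks sharpness of the counting: for $k=2$, $a=0,1$ the exponent $k-a/2$ is $2$ or $3/2$, so $\ell=2$ suffices as claimed; for $k=2$, $a=2$ the formula $\lceil k-a/2\rceil=1$ would be too optimistic for the integral to converge (a single $|\Im z|^0$ weight cannot beat $\eta^{-1}$ after accounting for the two-variable coupling), which is exactly why the statement demotes this case to $H^2$. The main obstacle is the bookkeeping in this bulk integral: one must correctly match, block by block of the non-crossing partition, which spectral parameters produce the singular $1/\eta$ factors in $M$ (there are $k-\lceil a/2\rceil$ of them per \Cref{lemma size M}) versus which only see bounded divided differences, and then distribute the $k$ available $|\Im z_i|$-weights from the almost-analytic extensions so that every singular factor is tamed while the overall Sobolev order stays at $\lceil k-a/2\rceil$ rather than $k$. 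Once this accounting is set up, the remaining steps — Fubini, the single-variable Helffer--Sj\"ostrand identity for the main term, and the crude tail bound — are routine. The extension to several independent Wigner matrices follows as in \cite[Remark 2.13]{MR4372147} since the multi-resolvent local law and the structure of $M$ both generalize directly.
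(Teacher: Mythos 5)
Your overall plan (Helffer--Sj\"ostrand representation, substitute the local law, identify the main term with the free-cumulant block structure) is the same as the paper's, but the central estimate as you have set it up has a genuine gap. The local law~\eqref{loc1} is only valid for $\min_j\abs{\Im z_j}\ge N^{-1+\epsilon}$, whereas your error integral runs over the entire strip $\abs{\Im z_i}\le 1$ including the regime where the local law simply does not apply. This tiny-$\eta$ region is exactly where the real work lies, and your proposal says nothing about it. The paper deals with it by first using Stokes' theorem (their~\eqref{eq:ineeduse}, exploiting analyticity of $G(z_i)$ in $z_i$) to push all but one of the $\partial_{\bar z}$-derivatives to the line $\Im z=\eta_0:=N^{-1+\xi/2}$, reducing the bad region to a single small variable $\eta_1$, and then invoking a separate \emph{a priori} bound on resolvent chains for $\abs{\Im z_i}$ as small as $N^{-10k}$ (their Lemma~\ref{lem:addbsma}, proved via spectral decomposition, rigidity, and the two-resolvent local law on the scale $N\eta\rho\sim 1$). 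Without some substitute for this a priori bound you cannot close the argument near the real axis.

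A second, smaller but real, issue: as written, bounding $\prod_i\abs{\partial_{\bar z_i}\widetilde f_i}$ through $\abs{\Im z_i}^{\ell-1}\abs{f_i^{(\ell)}}$ for every $i$ and then integrating produces an error of size $\prod_i\norm{f_i}_{H^\ell}$, not $\max_i\norm{f_i}_{H^\ell}$ as claimed. The same Stokes-theorem step fixes this too, since after integration by parts only one factor $\partial_{\bar z}(f_1)_\C$ remains while the others enter only through $\abs{(f_j)_\C(x_j+\ii\eta_0)}\lesssim\norm{f_j}_{L^\infty}\lesssim 1$. Finally, you should note (as the paper does) that one must first replace $f_i\in H^\ell(\R)$ by a compactly supported cutoff via eigenvalue rigidity, so that the almost-analytic extension can be taken compactly supported; this is routine but necessary.
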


Exactly the same result \eqref{eq:niceintform} for $k=1$ and $f\in H^2$ was proven in \cite{2012.13218}, 
where we actually even proved a CLT for $\braket{f(W)A}$.

We remark that in Corollary~\ref{cor:HS} there is a significant improvement in the error term compared
 to \cite[Theorem 2.6]{MR4372147} where the matrices $B_j$ do not necessarily have trace zero. Namely, the Sobolev 
 norm $\norm{\cdot}_{H^k}$ in the error term of \cite[Theorem 2.6]{MR4372147} 
 is here replaced by $\norm{\cdot}_{H^{\lceil k-a/2\rceil}}$, with $a$ denoting the number of traceless matrices. 
 For $a=0$ the error terms in Corollary~\ref{cor:HS} coincide with the ones in \cite[Theorem 2.6]{MR4372147}.

\begin{remark}[Thermalisation]\label{rmk:therm}
    We now specialise Corollary~\ref{cor:HS} to $f(x)=e^{\ii sx}$, with $s>0$, and define
    \begin{equation}
        \varphi(s):=\int_{-2}^2e^{\ii s x}\rho_{\mathrm{sc}}(x)\, \dif x=\frac{J_1(2s)}{s},
    \end{equation}
    where $J_1$ is the Bessel function of the first kind.
    The thermalisation result from~\cite[Corollaries 2.9-2.10]{MR4372147}  asserts that the unitary Heisenberg evolution
    generated by the Wigner matrix renders deterministic observables (matrices) asymptotically independent 
    for large times. More precisely,
    \begin{equation}
        \label{eq:oldtherm}
        \braket{e^{\ii s W}B_1e^{-\ii s W}B_2}=\braket{B_1}\braket{B_2}+\varphi(s)^2\braket{B_1B_2}+\mathcal{O}_\prec\left(\frac{s^2}{N}\right),
    \end{equation}
    for any deterministic matrices $B_1,B_2$ (for simplicity we only stated the case $k=2$).

    Using the optimal local law for two resolvents in \eqref{loc1}, by a very similar proof to the one of Corollary~\ref{cor:HS}, we conclude
    \begin{equation}
        \label{eq:therm}
        \braket{e^{\ii s W}A_1e^{-\ii s W}A_2}=\varphi(s)^2\braket{A_1A_2}+\mathcal{O}_\prec\left(\frac{s}{N}\right),
    \end{equation}
    with $\braket{A_1}=\braket{A_2}=0$. Note the improved error term in \eqref{eq:therm} compared to $s^2 N^{-1}$ from \eqref{eq:oldtherm}, which allow us to prove that
    \[
    \braket{e^{\ii s W}A_1e^{-\ii s W}A_2}\approx \varphi(s)^2\braket{A_1A_2}
    \]
    for any $s\ll N^{1/4}$ (instead of $s\ll N^{1/5}$ from \eqref{eq:oldtherm}), where we used that $\varphi(s)^2\sim s^{-3}$ for $s\gg 1$. We remark that by Corollary~\ref{cor:HS} we obtain a similar improvement for any $k\ge 3$, but we refrain from stating it for notational simplicity.
\end{remark}

\section{Proof of the multi-resolvent local law in the $d\le1$ regime}
We  give a detailed proof  of Theorem~\ref{thm local law}  for the much more involved $d\le1$ regime,
in particular in this case $\eta\le 1$.
In Appendix~\ref{app:largeeta} we explain the necessary modifications for  the $d\ge1$ case.
At a certain technical point (within the proof of Lemma~\ref{absG lemma}), the proof for the $d\le 1$ 
uses~\eqref{loc1} for the $d\ge1$ regime, but this lemma is not needed for the proof in the $d\ge 1$
regime, so our argument is not circular.  With the exception of Appendix~\ref{app:largeeta}, throughout the rest
of the paper we assume that $d\le 1$, hence $\eta\le 1$.

For traceless deterministic matrices \(A_j\), \(\norm{A_j}\le 1\), \(\braket{A_j}=0\), deterministic bounded vectors 
\(\vx,\vy\), \(\norm{\vx}+\norm{\vy}\le1\) and for $k\ge  1$ we introduce the normalized differences
\begin{equation}\label{Psi def}
    \begin{split}
        \Psi_k^\av(\bm z_k,\bm A_k)&: = N \eta^{k/2}\abs*{\braket{G_1A_1\cdots G_kA_k-M(z_1,A_1,\ldots,A_{k-1},z_k)A_k}}, \\
        \Psi_k^\iso(\bm z_{k+1},\bm A_k,\vx,\vy)&: = \sqrt{N\eta^{k+1}}\abs*{\Bigl(G_1A_1\cdots A_k G_{k+1}-M(z_1,A_1,\ldots,A_k,z_{k+1})\Bigr)_{\vx\vy}},
    \end{split}
\end{equation}
where 
\begin{equation}\label{defs}
    G_k:=G(z_k),\quad \eta:=\min_i\abs{\Im z_i},\quad \bm z_k:=(z_1,\ldots,z_k), \quad \bm A_k:=(A_1,\ldots, A_k).
\end{equation}
For convenience we extend these definitions to \(k=0\) by
\begin{equation}
    \Psi_0^\av(z):=N\eta\abs{\braket{G(z)-m_\mathrm{sc}(z)}},\quad \Psi_0^\iso(z,\vx,\vy):=\sqrt{N\eta} \abs{\braket{\vx,(G(z)-m_\mathrm{sc}(z))\vy}}, \quad \eta:=\abs{\Im z},
\end{equation}
and note that 
\begin{equation}\label{eq singleG}
    \Psi_0^\av+\Psi_0^\iso\prec 1
\end{equation} 
by the well known single-resolvent local law~\cite{MR3183577, MR2871147, MR3103909}. 
Note that the index $k$ counts the number of traceless matrices. %

 For notational convenience we also introduce the concept of \emph{\(\epsilon\)-uniform bounds}.
\begin{definition}\label{def uniform} Fix any \(\epsilon>0\) and \(l\in\N\).  Let \(k\in \N\), then  we say that the bounds 
    \begin{equation}\label{general error}
        \begin{split}
            \abs*{\braket*{G(z_1)B_1\cdots G(z_k)B_k-M(z_1,B_1,\ldots,B_{k-1},z_k)B_k}}&\prec \cE^\av,\\
            \abs*{\bigl(G(z_1)B_1\cdots B_kG(z_{k+1})-M(z_1,B_1,\ldots,z_k,B_k,z_{k+1})\bigr)_{\vx\vy}}&\prec \cE^\iso
        \end{split}
    \end{equation}
    hold \emph{\((\epsilon,l)\)-uniformly} for some control parameters \(\cE^{\av/\iso}=\cE^{\av/\iso}(N,\eta) \), 
     depending  only on \(N,\eta\), if the implicit constants in~\eqref{general error} 
     are uniform in bounded deterministic matrices \(\norm{B_j}\le 1\), 
    deterministic vectors \(\norm{\vx}, \norm{\vy}\le1\), and spectral parameters \(z_j\) with \(1\ge \eta:=\min_{j}\abs{\Im z_j}\ge l N^{-1+\epsilon}\),  
    \( \abs{z_j}\le N^{100}\). For simplicity, we say~\cref{general error} holds \(\epsilon\)-uniformly if it holds \((\epsilon,1)\)-uniformly.
    Moreover, we may allow for additional restrictions on the deterministic matrices,  and talk
    about  uniformity under the additional assumption that some of the matrices are traceless, 
    or some of them is a multiple of the identity matrix, etc.
\end{definition}

Note that~\eqref{general error} is stated for each fixed choice of the spectral parameters $z_j$ in the left hand side, but in
fact it is equivalent to an apparently stronger statement, when the same bounds hold with suprema over the spectral parameters \(z_j\).
More precisely, if $ \cE^\av\ge N^{-C}$ for some constant $C$, then~\eqref{general error} implies
\begin{equation}\label{withsup}
    \sup_{z_1, z_2, \ldots, z_k} \abs*{\braket*{G(z_1)B_1\cdots G(z_k)B_k-M(z_1,B_1,\ldots,B_{k-1},z_k)B_k}}\prec \cE^\av
\end{equation}
(and similarly for the isotropic bound),  
where the supremum is taken over all choices of $z_j$'s in the admissible spectral domain, i.e. with  \( \abs{z_j}\le N^{100}\) and 
\(1\ge \min_{j}\abs{\Im z_j}\ge lN^{-1+\epsilon}\). This bound follows from~\eqref{general error} by the usual {\it grid argument}.
Indeed,  we may apply~\eqref{general error} for a dense $N^{-10k}$-grid of $k$-tuples of complex numbers within the 
spectral domain.
The number of such tuples is at most polynomial in $N$ and we use the standard property of
stochastic domination to conclude $\max_i X_i\prec C$ from $X_i\prec C$ as long as the number of $i$'s
is at most polynomial in $N$. Finally, we can use
the Lipschitz  continuity  (with Lipschitz constant at most $\eta^{-k-1}\le N^{k+1}$)
of the left hand side of~\eqref{general error} to extend the bound for  all spectral parameters in the spectral domain.
In the sequel we will frequently use this equivalence between~\eqref{general error} and~\eqref{withsup}, e.g.
when we integrate such bounds over some spectral parameter.

We first establish the following key lemma which allows us 
to conclude multi-resolvent local laws for general deterministic matrices from the special case where 
each deterministic matrix is traceless. 
\begin{lemma}\label{G^2 lemma}
    Fix \(\epsilon>0,l\in\N\) and \(k>0\) and assume that for all \(1\le j \le k\) and some control parameters $\psi_j^{\av/\iso}$ 
    the a priori bounds 
    \begin{equation}\label{eq a priori j1}
        \Psi_j^\av(\bm z_j,\bm A_j)\prec \psi_j^\av,\qquad \Psi_j^\iso(\bm z_j,\bm A_{j},\vx,\vy)\prec \psi_j^\iso
    \end{equation}
    have been established \emph{\((\epsilon,l)\)-uniformly in traceless matrices}. Then it holds that
    \begin{equation}\label{eq G^2 lemma}
        \begin{split}
            \braket{G(z_1)B_1\cdots G(z_k)B_{k}}&=\braket{M(z_1,B_1,\ldots,B_{k-1},z_k)B_k} + \landauOprec*{\frac{\sum_{j=a}^{k-b}\psi_j^\av}{N\eta^{k-a/2}}}\\
            \Bigl(G(z_1)B_1G(z_2)\cdots B_{k}G(z_{k+1})\Bigr)_{\vx\vy} &= M(z_1,B_1,\ldots,B_k,z_{k+1})_{\vx\vy} + \landauOprec*{\frac{\sum_{j=a}^{k-b}\psi_j^\iso}{\sqrt{N}\eta^{k-a/2+1/2}}},
        \end{split}
    \end{equation}
    \((\epsilon,l+1)\)-uniformly in
    vectors $\vx, \vy$  and deterministic matrices \(B_1,\ldots,B_k\), out of which \(0\le a\le k\) are traceless and \(0\le b\le k\) are a multiple of the identity. 
\end{lemma}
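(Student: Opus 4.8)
The plan is to reduce the case of general deterministic matrices $B_j$ to the case of traceless matrices by decomposing each $B_j = \braket{B_j}I + \mathring B_j$, where $\mathring B_j$ is traceless, and then to expand the resolvent chain multilinearly. This produces a sum over subsets $S\subset[k]$: for $j\in S$ we keep the traceless part $\mathring B_j$, for $j\notin S$ we keep the scalar part $\braket{B_j}I$, which can be pulled out of the chain. After pulling out the scalars, each term in the expansion is (a scalar times) a chain $G(z_{i_1})\mathring B_{j_1}G(z_{i_2})\cdots$ in which consecutive resolvents with no traceless matrix in between have been merged using the resolvent identity $G(z)G(z') = (G(z)-G(z'))/(z-z')$ (or $G(z)^2 = G'(z)$ when the parameters coincide). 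The merging step is where we must be slightly careful: merging $r$ consecutive resolvents into one costs a factor controlled by the minimal spectral gap, but since all $|z_j|\le N^{100}$ and differences of parameters on the grid are controlled, and ultimately by the grid/Lipschitz argument around~\eqref{withsup} we may assume the $z_j$'s are separated or use divided-difference (contour) representations, this produces at worst polynomially many sub-chains of length between $a$ (all scalar parts trivial) and $k$, each of which is a chain of traceless matrices to which the a priori bounds~\eqref{eq a priori j1} apply. The lower summation limit $j\ge a$ comes from the fact that the terms with $|S|<a$ are impossible once we insist $a$ of the original matrices are genuinely traceless (their $\mathring B_j = B_j$ must be kept), and the upper limit $k-b$ comes from the $b$ matrices equal to a multiple of the identity, whose traceless part vanishes identically, so they never contribute to $S$.

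The key bookkeeping step is then matching the powers of $\eta$. For a term in the expansion with $|S|=j$ traceless matrices kept, the corresponding chain of length $j$ has, by~\eqref{eq a priori j1} and the definition~\eqref{Psi def}, fluctuation $\Psi_j^{\av}/(N\eta^{j/2}) \prec \psi_j^{\av}/(N\eta^{j/2})$ around its deterministic approximation $M_j$. Multiplying back the $k-j$ scalar factors $\braket{B_{j'}}$ (each bounded by $\norm{B_{j'}}\lesssim 1$) and the merging factors, and using that the deterministic main terms reassemble — by the very structure of the non-crossing-partition formula~\eqref{eq M def}–\eqref{eq pTr def} — exactly into $\braket{M(z_1,B_1,\ldots,z_k)B_k}$, we are left with an error of size $\psi_j^{\av}/(N\eta^{j/2})$ times the $\eta$-cost of the merging. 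The merging of the resolvents that were separated by scalar blocks produces, by the divided-difference representation~\eqref{msc dd} and the bound on $M$ from~\cref{lemma size M}, exactly the factor $\eta^{-(k-j)+\text{(correction)}}$ needed so that the total is bounded by $\psi_j^{\av}/(N\eta^{k-a/2})$; the worst case is $j$ as small as possible, i.e.\ $j=a$, which is why all $\psi_j^{\av}$ with $a\le j\le k-b$ appear in the sum. The isotropic case is identical, with the two half-open ends of the chain handled by the vectors $\vx,\vy$ and the extra $\eta^{-1/2}$ and $N^{-1/2}$ reflecting the different normalization in~\eqref{Psi def}.

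The main obstacle I anticipate is precisely the control of the merging step: when several consecutive $B_j$'s are scalar, one must merge three or more resolvents $G(z_{i})G(z_{i+1})\cdots$, and a naive use of $G G' = (G-G')/(z-z')$ loses factors of $1/(z-z')$ that are not a priori bounded by $\eta^{-1}$ when the real parts of the $z_j$ are close but the imaginary parts are not. The clean way around this is to represent the merged object directly via the single-resolvent local law~\eqref{eq singleG} (equivalently \cref{thm sG local law}) applied to the appropriate divided difference of $G$, rather than iterating the two-resolvent identity; the divided difference $m_{\mathrm{sc}}[z_i,\ldots]$ in~\eqref{msc dd} is bounded by $\eta^{-(\text{number of points}-1)}$ uniformly, and the fluctuation of the corresponding divided difference of $G$ is controlled by $\Psi_0^{\av/\iso}\prec 1$ together with the same $\eta$-power, via a contour-integral (Cauchy) representation and the grid argument of~\eqref{withsup}. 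One must also keep track, in the $d\le 1$ regime we are in, that $\eta\le 1$ so that all these negative powers of $\eta$ go in the direction of making the bound weaker in a controlled, monotone way; this is what makes the final $(\epsilon,l+1)$-uniformity (rather than $(\epsilon,l)$) natural, since merging can push the effective smallest $\eta$ up by a bounded factor. Once the merging is handled, the reassembly of the deterministic terms is a purely combinatorial identity about Kreweras complements and free cumulants that follows from the definitions in~\eqref{eq M def}–\eqref{eq:freecumulant}, and the error estimate is then immediate from the triangle inequality over the polynomially many terms of the multilinear expansion.
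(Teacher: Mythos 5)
Your overall plan — multilinear decomposition $B_j=\braket{B_j}I+\mathring B_j$, reduction to chains of traceless matrices, and control of the merged scalar-separated resolvent groups via some representation — is indeed the paper's route, and your identification of the summation range $a\le j\le k-b$ and the rough $\eta$-bookkeeping are correct. But the merging step, which you correctly flag as the main obstacle, is handled incorrectly. The resolvent identity $G(z)G(z')=(G(z)-G(z'))/(z-z')$ is dangerous precisely when $\Im z$ and $\Im z'$ have the \emph{same} sign and are close (then $|z-z'|$ may be arbitrarily small), and safe when they have \emph{opposite} signs (then $|z-z'|\ge|\Im z|+|\Im z'|\ge 2\eta$); your characterization ("real parts close but imaginary parts not") has this backwards. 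Moreover, the suggestion that "by the grid/Lipschitz argument around~\eqref{withsup} we may assume the $z_j$'s are separated" is not valid: the grid argument propagates a bound from a polynomial grid to all admissible parameters, it does not entitle you to impose a separation that is not there.

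The correct way to make the merging precise — which you gesture towards with "divided-difference (contour) representation" but do not carry out — is a two-stage reduction. First apply the resolvent identity only to pairs with opposite-sign imaginary parts, where $|z_i-z_j|\ge\eta$ is automatic; this reduces the chain to the form where each scalar-separated group of consecutive resolvents has a uniform sign of $\Im z$. Then, for each such group, use the residue-theorem representation
\begin{equation*}
\prod_{j=1}^{r} G(z_j) = \frac{1}{\pi}\int_\R \Im G(x+\ii\zeta)\prod_{j=1}^{r}\frac{1}{x-z_j+\sgn(\Im z_j)\ii\zeta}\,\dif x,
\end{equation*}
valid for $0<\zeta<\min_j\Im z_j$ (or $\max_j\Im z_j<-\zeta<0$), which replaces the whole group by a single $\Im G$ under an integral against an explicitly bounded kernel; the deterministic $M$ satisfies an exactly analogous identity, so the same manipulation can be applied to both sides of~\eqref{eq G^2 lemma}. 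The full chain then becomes an integral of alternating chains of single resolvents and traceless matrices, to which the a priori $\Psi_j^{\av/\iso}$-bounds apply directly, with the kernels absorbing the lost $\eta$-powers. Note also that it is the full $\Psi_j^{\av/\iso}$ for the resulting alternating chain of length $j$ that controls the fluctuation, not $\Psi_0$ applied to each merged group individually — the latter does not capture the joint fluctuations across the groups.
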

Using~\Cref{G^2 lemma} we reduce~\Cref{thm local law} to the following Lemma. 
\begin{lemma}[Final estimate on \(\Psi_k^{\av/\iso}\)]\label{final lemma}
    For any \(\epsilon>0\) and \(k\ge 1\) we have 
    \begin{equation}\label{eq final bound}
        \Psi_k^{\av}+\Psi_k^\iso\prec1
    \end{equation}        
    \(\epsilon\)-uniformly in traceless matrices.
\end{lemma}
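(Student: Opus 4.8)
\textbf{The overall plan.} The plan is to set up a closed self-improving system of inequalities ("master inequalities") for the quantities $\Psi_k^{\av}$ and $\Psi_k^{\iso}$ and to run a two-stage bootstrap. In the first stage, I would prove a non-optimal \emph{a priori} bound of the form $\Psi_k^{\av}+\Psi_k^{\iso}\prec \sqrt{N\eta}$, valid for all $k$ simultaneously; in the second stage, starting from this a priori bound, I would iteratively improve the exponent of $N\eta$ in the estimate until it reaches $0$, i.e.\ $\Psi_k^{\av}+\Psi_k^{\iso}\prec 1$. The key input for both stages is a system of coupled inequalities obtained from a "minimalistic" cumulant expansion applied to high moments $\E|\braket{(GA)^k-M_kA}|^p$ and its isotropic analogue. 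Expanding via the integration-by-parts / cumulant-expansion identity produces: (i) the main deterministic term $M_k$, which cancels by construction; (ii) lower-order chains of resolvents, which get fed back into the $\Psi_j$'s via Lemma~\ref{G^2 lemma}; and (iii) longer chains (up to length $2k$), which are the genuine difficulty. The crucial structural point is that a chain of length $2k$ is split into \emph{two} chains of length $k$ (via a Cauchy--Schwarz in the spectral decomposition of $W$, as sketched in the strategy section), losing only a bounded power of $N\eta$ — at most $(N\eta)^2$ in the averaged case and $(N\eta)^1$ in the isotropic case — independently of $k$. Combined with the Ward identity $GG^*=\Im G/\eta$ to control the $\Im G$ factors, this produces the master inequalities.

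\textbf{The master inequalities and the first bootstrap.} Concretely, the cumulant expansion should yield something of the schematic shape
\begin{equation*}
    \Psi_k^{\av}\prec 1 + \text{(lower-order/self terms)} + \frac{(\Psi_{2k}^{\av})^{1/2}(\cdots)^{1/2}}{(N\eta)^{c_1}}+\cdots,
\end{equation*}
where the $1$ comes from the Gaussian leading-order variance (estimated sharply using the optimality of Lemma~\ref{lemma size M}), and the remaining terms are genuinely smaller provided one already has an a priori control on $\Psi_{2k}$. The first step is to establish, by a step-two induction on $k$ (distinguishing the parity, since the $d\le1$ size of $M$ in Lemma~\ref{lemma size M} involves $\lceil a/2\rceil$), the crude bound $\Psi_k^{\av}+\Psi_k^{\iso}\prec \sqrt{N\eta}$. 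The base cases $k=0$ are \eqref{eq singleG}; in the inductive step one uses the trivial deterministic bound $|\braket{(GA)^k}|\le \eta^{-k}$ and $|\Im G|\le \eta^{-1}$ together with the chain-splitting Lemma~\ref{lemma doubling} to bound $\Psi_{2k}$ in terms of $\Psi_k$, and then closes the inequality. A second, shorter induction successively improves the power $1/2$ of $N\eta$ in this a priori bound (each round gaining a fixed fraction), getting down to, say, $\Psi_k\prec (N\eta)^{1/2}$ with a much better effective constant — but the point is that one does \emph{not} yet try to reach the optimal $(N\eta)^0$ this way, because the chain-splitting costs $(N\eta)$-factors that cannot be recovered for large $k$.

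\textbf{The second bootstrap and the main obstacle.} With $\Psi_{2k}\prec\sqrt{N\eta}$ in hand for all $k$, I would re-run the master inequalities, but this time, rather than reducing $\Psi_{2k}$ back to $\Psi_k$ (which would cost $(N\eta)$ or $(N\eta)^2$), I substitute the already-proven a priori bound $\Psi_{2k}\prec\sqrt{N\eta}$ directly. This substitution loses only a factor $\sqrt{N\eta}$, and the key miracle — which I expect to be the technical heart of the proof — is that this $\sqrt{N\eta}$ loss is exactly compensated by the extra smallness gained from the higher-order cumulants appearing in the "long-chain" terms of the expansion: third- and higher cumulants of $\sqrt N w_{ij}$ are $O(1)$ but come with extra $1/\sqrt N$ normalization factors that, after the Ward-identity bookkeeping, translate precisely into the needed gain. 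One then obtains a \emph{closed} inequality for $\Psi_k^{\av}+\Psi_k^{\iso}$ of the form $\Psi_k\prec 1 + (N\eta)^{-\delta}\,(\text{a priori bound})$ for some $\delta>0$, which iterated finitely many times gives $\Psi_k\prec 1$. The $\epsilon$-uniformity is maintained throughout by the grid argument recorded after Definition~\ref{def uniform} (at the cost of shrinking $l$ by a bounded amount at each use of Lemma~\ref{G^2 lemma}, which is harmless since only finitely many iterations are performed). The main obstacle is the bookkeeping in the second stage: one must verify that \emph{every} term produced by the minimalistic cumulant expansion either (a) reproduces the Gaussian leading term, (b) is a lower-order chain absorbed by Lemma~\ref{G^2 lemma}, or (c) is a long chain whose a-priori-bound contribution is beaten by the accompanying higher-cumulant smallness — and crucially that the traceless structure is preserved along the expansion so that the correct $\eta$-powers (with the $a/2$ reduction) are tracked; this is precisely where the previous approach in~\cite{MR4334253} struggled with "$A$ becoming $A^2$," and the gain here is that working at the level of the precise differences $\Psi$ rather than upper bounds on chain sizes makes this bookkeeping manageable.
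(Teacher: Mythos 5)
Your proposal is correct and follows essentially the same architecture as the paper: master inequalities from a minimalistic cumulant expansion (Proposition~\ref{prop Psi ineq}), a reduction lemma splitting length-$2k$ chains into two length-$k$ chains (Lemma~\ref{lemma doubling}), a first bootstrap via step-two induction establishing the a priori bound $\Psi_k^{\av/\iso}\prec\sqrt{N\eta}$ (Lemma~\ref{a priori lemma}), and a second pass plugging that a priori bound back into the master inequalities with induction on $k$ to conclude $\prec 1$.

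One small imprecision worth flagging: you attribute the compensation of the $\sqrt{N\eta}$ loss specifically to the extra $1/\sqrt N$ factors carried by higher-order cumulants. In fact, the long chain $\psi_{2k}$ already enters the variance of the \emph{Gaussian} (second-cumulant) contribution as $(\psi_{2k}^{\av})^{1/2}/\sqrt{N\eta}$ (see~\eqref{av Gaussian bound}), and the master inequalities uniformly place every occurrence of a $\psi_{2j}$-type term behind a $(N\eta)^{-1/2}$ or $(N\eta)^{-1/4}$ prefactor coming from the $1/N^2$ normalization and the Ward improvements. So the gain is structural, present in all cumulant orders, rather than a special feature of cumulants of order $\ge 3$ — which is exactly why substituting $\psi_{2j}\prec\sqrt{N\eta}$ closes the system. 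This does not affect the validity of your overall argument.
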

\begin{proof}[Proof of~\Cref{thm local law}]
    \Cref{thm local law} is equivalent to~\Cref{final lemma} in case when all matrices are traceless. The general case follows from~\Cref{G^2 lemma} and setting \(\psi_k^{\av/\iso}=1\) due to~\Cref{final lemma}. 
\end{proof}
We prove~\Cref{final lemma} in two steps and first establish a weaker bound as stated in the following lemma.
\begin{lemma}[A priori estimate on \(\Psi_k^{\av/\iso}\)]\label{a priori lemma}
    For any \(\epsilon>0\) and \(k\ge 1\) we have 
    \begin{equation}\label{eq a priori final bound}
        \Psi_k^{\av}+\Psi_k^\iso\prec\sqrt{N\eta}
    \end{equation}        
    \(\epsilon\)-uniformly in traceless matrices.
\end{lemma}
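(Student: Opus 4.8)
The plan is to run a bootstrap on a single exponent $\gamma$ in the ansatz $\Psi_k^\av+\Psi_k^\iso\prec (N\eta)^\gamma$ --- holding for a fixed $k$, $\epsilon$-uniformly in the traceless matrices, vectors and spectral parameters --- and to decrease $\gamma$ from a crude initial value to $1/2$ in an $N$-independent number of steps. Two ingredients feed the bootstrap. The first is a family of \emph{master inequalities} that bound $\Psi_k^{\av/\iso}$, after subtracting the explicit deterministic terms, by the quantities $\Psi_j^{\av/\iso}$ with $j\le 2k$, each entering with a genuine saving of a power of $N\eta$ in front; I would derive these by applying a minimalistic cumulant expansion to high moments of $\Psi_k$ in the spirit of the recursive moment estimates of~\cite{MR3800840}, arranging the expansion so that the leading terms reproduce the defining recursion for $M$ from~\eqref{eq M def} and all remaining chains (of length up to $2k$) are collected into the error. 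The second ingredient is a \emph{doubling lemma}: using a Cauchy--Schwarz step in the spectral decomposition of $W$ together with the Ward identity $GG^*=\Im G/\eta$, a chain of length $2k$ occurring in the master inequalities is split into a product of two chains of length $k$ plus deterministic $M$-factors, at the cost of a factor at most $(N\eta)^2$ in the averaged and $N\eta$ in the isotropic case. Composing the two gives, at each fixed $k$, a closed polynomial system among $\Psi_1^{\av/\iso},\ldots,\Psi_k^{\av/\iso}$, with $\Psi_0^{\av/\iso}\prec1$ from~\eqref{eq singleG} as input.

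The iteration is then straightforward in principle. For the base case, $\norm{G_i}\le\eta^{-1}$ and the size estimate of~\Cref{lemma size M} give $\abs{\braket{G_1A_1\cdots G_kA_k-M_kA_k}}\lesssim\eta^{-k}$ and the analogous isotropic bound, whence $\Psi_k^\av+\Psi_k^\iso\lesssim N^{1+k/2}\le(N\eta)^{\gamma_0}$ for some $\gamma_0=\gamma_0(k,\epsilon)$, using $N\eta\ge N^\epsilon$. Assuming $\Psi_j^{\av/\iso}\prec(N\eta)^{\gamma_n}$ for all $j\le k$ (hence, by the doubling lemma, for the length-$2k$ chains on the right-hand sides up to the factor $(N\eta)^2$), I substitute into the closed system; since the master inequalities divide by a strictly larger power of $N\eta$ than the doubling loses, the output is $\Psi_j^{\av/\iso}\prec(N\eta)^{\gamma_{n+1}}$ with $\gamma_{n+1}=a\gamma_n+b$ for $N$-independent $a\in(0,1)$, $b>0$ whose fixed point is $\le1/2$. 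A finite number of iterations of the contraction $\gamma\mapsto a\gamma+b$ brings $\gamma$ below $1/2+\delta$ for every $\delta>0$, and the leftover $(N\eta)^\delta$ is absorbed into the $N^\epsilon$ in the definition of $\prec$, yielding~\eqref{eq a priori final bound}. The reduction between~\eqref{general error} and~\eqref{withsup} via the grid-plus-Lipschitz argument is used both to preserve $(\epsilon,l)$-uniformity and to legitimise the spectral-parameter integrations implicit in the Ward identities.

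The real work --- and the place I expect the main difficulty --- is the master inequalities together with the bookkeeping of $\eta$-powers. In the cumulant expansion one must single out exactly those generated chains that reassemble into the deterministic term $M_k$, so that the prefactor $N\eta^{k/2}$ in~\eqref{Psi def} cancels them, and then show that every surviving chain carries enough factors $1/(N\eta)$ from higher-order cumulants for the bootstrap map to be a genuine contraction; the deterministic blocks produced along the way have size $\eta^{-\Theta(k)}$, and it is precisely the optimality of~\Cref{lemma size M} --- in particular the $\sqrt{\eta}$-gain per traceless matrix --- that is needed to verify that these powers of $\eta$ balance the normalisations rather than merely cancel formally. A further point requiring care is that the Cauchy--Schwarz/Ward step of the doubling lemma must be carried out locally in the spectrum and for both $G$- and $\Im G$-chains (the latter being more localised), and that splitting a length-$2k$ chain into \emph{two} chains of length $k$ (instead of $k$ chains of length two as in~\eqref{4G}) is what keeps the loss bounded by $(N\eta)^2$ uniformly in $k$; producing honest $\Psi$-type quantities of half the length out of this step, rather than only crude norm bounds, is the quantitative gain over~\cite{MR4334253} on which the whole argument rests.
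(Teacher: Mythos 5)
Your overall architecture --- minimalistic cumulant expansion producing master inequalities, a reduction (doubling) lemma to close the hierarchy, then a bootstrap on the size of $\Psi$ --- matches the paper's. But there is a genuine gap in the claim that the master inequalities, substituted with a uniform ansatz $\Psi_j\prec(N\eta)^{\gamma}$ for all $j\le k$, produce a contraction $\gamma_{n+1}=a\gamma_n+b$ with $a<1$. They do not: the actual inequality for $\Psi_k^\av$ is, schematically,
\begin{equation}
\Psi_k^\av\prec 1+\sum_{j=1}^{k-1}\psi_j^\av+\frac{\psi_{k-1}^\iso+\psi_k^\iso+\sum_{j\ge\lceil k/2\rceil}(\psi_{2j}^\av)^{1/2}}{\sqrt{N\eta}},
\end{equation}
and the lower-index terms $\psi_j^\av$, $j\le k-1$, enter with coefficient \emph{one}, not divided by any positive power of $N\eta$. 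These come from the telescopic replacement of sub-chains by $M$'s in~\eqref{eq av un repl0} and cannot be suppressed; they are an honest part of $\cE_k^\av$. If all $\psi_j$, $j\le k$, carry the same exponent $\gamma$, the right-hand side is again $\gtrsim(N\eta)^\gamma$, and the map on $\gamma$ fixes rather than contracts. The ``doubling beats the division'' mechanism only applies to the length-$\ge k$ terms (the ones that actually carry a $1/\sqrt{N\eta}$ or $1/(N\eta)^{1/4}$ prefactor); it does nothing for the $j<k$ terms.

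What rescues the argument in the paper --- and what is missing from your plan --- is an \emph{induction on $k$ in steps of two}, so that when you arrive at the inequality for $\Psi_k$ the lower-index quantities $\Psi_1,\ldots,\Psi_{k-2}$ are not ansatz variables at the same exponent but have already been upgraded to their final bound $\prec\sqrt{N\eta}$ (indeed $\Psi_1\prec 1$ in the base case). Only then does the remaining system in $(\Psi_{k-1},\Psi_k)$ become a genuine contraction when combined with the doubling bounds on $\Psi_{2k}^\av$ and $\Psi_{k+j}^\iso$. The base case $k\le 2$ is itself nontrivial and requires running the inner iteration on the coupled $(\Psi_1,\Psi_2)$ system with the $j\le 4$ master inequalities, which is why the paper treats $k=1,2$ separately before the induction starts. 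Relatedly, the two-step increment is forced by the parity distinction in the doubling lemma (even vs.\ odd $k$ in~\eqref{eq doubling}), and the isotropic reduction~\eqref{eq doubling iso} is only proved for $k$ even; a uniform exponent bootstrap that does not respect this parity structure would also break down there. Finally, the paper's ``iteration'' subroutine is of the form $X\prec A+x/B+x^{1-\alpha}C^\alpha\Rightarrow X\prec A+C$, which is not a linear exponent contraction; treating it as one loses the bookkeeping of $(\epsilon,l)$-uniformity that the paper needs to keep finite.
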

The rest of the proof is organised as follows: First, we prove~\Cref{G^2 lemma}, then in~\Cref{sec master} we state
the \emph{master inequalities} on the \(\Psi_k^{\av/\iso}\) parameters, which we then use to prove~\Cref{final lemma,a priori lemma} in~\Cref{sec lemmata}. Finally, the proof of the master inequalities will be presented in~\Cref{sec master proof}.
\begin{proof}[Proof of~\Cref{G^2 lemma}]
    We start the proof by splitting all those  \(k-a-b\) matrices \(B_i\) that are neither traceless nor multiples of the identity as \(B_i = \braket{B_i} + \mathring{B_i}\). Since~\cref{eq M def}  is multi-linear in the \(B\)-matrices and the error terms in~\cref{eq G^2 lemma} are monotonically decreasing as \(a\) or \(b\) are increased, it is sufficient to prove~\Cref{G^2 lemma} for the special case when \(a+b=k\), i.e.\ all matrices are either traceless or multiple  of the identity.
    
    Moreover, if \(\Im z_i \Im z_j<0\) then we use the resolvent identity \(G(z_i)G(z_j)=[G(z_i)-G(z_j)]/(z_i-z_j)\) and \(\abs{z_i-z_j}\ge \eta\) repeatedly to further reduce the lemma to the special case 
    \begin{equation}\label{lemma reduc1}
        \biggl(\prod_{j=1}^{k_1}G(z_{1,j})\biggr) A_1 \biggl(\prod_{j=1}^{k_2}G(z_{2,j})\biggr) A_2 \cdots
    \end{equation}
    where \(\braket{A_i}=0\) and \(\sgn(\Im z_{i,1})=\cdots=\sgn(\Im z_{i,k_i})\) for all $i$.
     We note that~\cref{eq M def} satisfies the same relation since 
    \begin{equation}
        M(\ldots,z_i,I,z_{i+1},\ldots)= \frac{M(\ldots,z_i,\ldots)-M(\ldots,z_{i+1},\ldots)}{z_i-z_{i+1}}
    \end{equation}
    due to  
    \begin{equation}
        m[z_i,z_{i+1}]=\frac{m_\mathrm{sc}(z_i)-m_\mathrm{sc}(z_{i+1})}{z_i-z_{i+1}}
    \end{equation}
    by definition. Finally, from the residue theorem we have that%
    \begin{equation}\label{lemma reduc2}
        \prod_{j=1}^{k}G(z_{j}) = \frac{1}{\pi}\int_\R \Im G(x+\ii\zeta) \prod_{j=1}^{k}\frac{1}{x-z_j+\sgn(\Im z_j)\ii\zeta}\dif x
    \end{equation} 
    whenever \(0<\zeta<\min_j\Im z_j\) or \(\max_j \Im z_j<-\zeta<0\). We note that \(M\) from~\cref{eq M def} satisfies the same relation since 
    \begin{equation}\label{lemma reduc2M}
        M(\ldots,z_i,I,z_{i+1},I,\ldots,I,z_{i+n},\ldots)= \frac{1}{2\pi\ii}\int_\R\frac{M(\ldots,x+\ii\zeta,\ldots)-M(\ldots,x-\ii\zeta,\ldots)}{(x+\sigma\ii\zeta-z_i)\cdots(x+\sigma\ii\zeta-z_{i+n})}\dif x
    \end{equation}
    for \(\sigma=\sgn(\Im z_i)=\cdots=\sgn(\Im z_{i+n})\) due to multi-linearity and 
    \begin{equation}
        m[z_i,\ldots,z_{i+n}] = \frac{1}{2\pi\ii}\int_\R \frac{m(x+\ii\zeta)-m(x-\ii\zeta)}{(x+\sigma\ii\zeta-z_1)\cdots(x+\sigma\ii\zeta-z_n)}\dif x
    \end{equation}
    from the residue theorem. By using~\cref{lemma reduc2} for each product in~\cref{lemma reduc1} obtain an alternating chain of traceless matrices and resolvents, so that the bound follows by the assumptions in~\eqref{eq a priori j1}.  
\end{proof}
\subsection{Master inequalities and reduction lemma}\label{sec master}
From now on every deterministic matrix \(A_i\) is assumed to be traceless and uniformity is understood as uniformity in traceless matrices. 
\begin{proposition}[A priori estimates on \(\Psi^{\av/\iso}\)]~\label{prop Psi ineq} 
    \begin{enumerate}[label=(\roman*)]
        \item Assume that
        \begin{equation}\label{eq a priori 4}
            \Psi_j^{\av/\iso} \prec \psi_j^{\av/\iso}, \qquad 1\le j\le4
        \end{equation}
        \((\epsilon,l)\)-uniformly. Then it holds that 
        \begin{subequations}
            \begin{align}\label{eq Psi1av}
                \Psi_1^\av&\prec 1+\frac{\psi_{1}^\iso + (\psi_1^\av)^{1/2}+(\psi_2^\av)^{1/2}}{\sqrt{N\eta}} \\\label{eq Psi2av}
                \Psi_2^\av&\prec 1+\psi_1^\av+\frac{\psi_{2}^\iso + (\psi_2^\av)^{1/2}+(\psi_4^\av)^{1/2}}{\sqrt{N\eta}}+\frac{(\psi_1^\iso)^2+(\psi_1^\av)^2+\psi_1^\iso(\psi_2^\av)^{1/2}}{N\eta} \\\label{eq Psi1iso}
                \Psi_1^\iso&\prec 
                1+\frac{\psi_1^\iso+\psi_1^\av}{\sqrt{N\eta}}+\frac{(\psi_2^\iso)^{1/2}}{(N\eta)^{1/4}}\\\label{eq Psi2iso}
                \Psi_2^\iso&\prec 1+\psi_1^\iso+\frac{\psi_2^\iso+(\psi_1^\iso\psi_3^\iso)^{1/2}+\psi_1^\av}{\sqrt{N\eta}}+\frac{\psi_1^\iso\psi_1^\av}{N\eta}+\frac{(\psi_3^\iso)^{1/2}+(\psi_4^\iso)^{1/2}}{(N\eta)^{1/4}},
            \end{align}
        \end{subequations}
        \((\epsilon,l+1)\)-uniformly. 
        \item Now, let \(k>2\) and assume that a priori bounds 
        \begin{equation}\label{eq a priori k}
            \begin{split}
                \Psi_j^\av&\prec \begin{cases}
                    \psi_j^\av:= \sqrt{N\eta}, &j\le k-2,\\ \psi_j^\av,&k-1\le j\le 2k,
                \end{cases}\\
                \Psi_j^\iso&\prec \begin{cases}
                    \psi_j^\iso:= \sqrt{N\eta}, &j\le k-2,\\ \psi_j^\iso,&k-1\le j\le 2k,
                \end{cases}
            \end{split}
        \end{equation}
        have been established \((\epsilon,l)\)-uniformly. Then it holds that 
        \begin{subequations}
            \begin{align}\label{eq Psikav}
                \Psi_k^\av&\prec1+\sum_{j=1}^{k-1}\psi_j^\av+\frac{\psi_{k-1}^\iso+\psi_{k}^\iso + \sum_{j=\lceil k/2\rceil}^{k}(\psi_{2j}^\av)^{1/2}}{\sqrt{N\eta}} \\\label{eq Psikiso}
                \Psi_k^\iso&\prec 1+ \sum_{j=1}^{k-1}\psi_{j}^\iso + \frac{\psi_k^\iso+(\psi_{k+1}^\iso\psi_{k-1}^\iso)^{1/2}+\psi_{k-1}^\av}{\sqrt{N\eta}} +\frac{\sum_{j=k+1}^{2k}(\psi_{j}^\iso)^{1/2}}{(N\eta)^{1/4}  }   
            \end{align} 
        \end{subequations}
      \((\epsilon,l+1)\)-uniformly.
    \end{enumerate}
\end{proposition}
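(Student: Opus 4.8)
The plan is to derive every inequality in~\cref{prop Psi ineq} from one algebraic identity---the renormalized self-consistent equation for a resolvent chain---combined with a single high-moment estimate of the renormalized fluctuation. I would use the Wigner stability identity in the form $G_i = m_i - m_i\,\un{W G_i} - m_i\braket{G_i-m_i}\,G_i$, where $m_i:=m_{\mathrm{sc}}(z_i)$ and $\un{W G_i}:=W G_i - \braket{G_i}\,G_i$ is the second-order renormalization (in the real case $\beta=1$ there is an extra transpose term of the same structure, which I suppress). Iterating this through the chain $G_1A_1\cdots G_kA_k$ and matching the output against the recursion defining $M$ in~\cref{eq M def}---this is where the non-crossing/Kreweras structure is reproduced---the difference $\braket{G_1A_1\cdots G_kA_k - M(z_1,A_1,\ldots,A_{k-1},z_k)A_k}$ and its isotropic counterpart get written as a finite sum of three kinds of terms: (i) \emph{renormalized} terms $\braket{\un{W\,\cG}}$ and $(\un{W\,\cG})_{\vx\vy}$, for sub-chains $\cG$ obtained by cutting the original chain; (ii) \emph{genuinely lower-order} products of the form (shorter chain $-$ its $M$) $\times$ (shorter chain), controlled by the a priori inputs~\cref{eq a priori 4}/\cref{eq a priori k} on $\Psi_j$, $j<k$, together with the deterministic bound~\cref{lemma size M}; and (iii) pure deterministic $M$-terms whose size, after the $N\eta^{k/2}$ (resp.\ $\sqrt{N\eta^{k+1}}$) rescaling of~\cref{Psi def}, combines with the leading second-cumulant contribution to give precisely the additive ``$1$'' on the right-hand sides. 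Since every $A_i$ is traceless, the self-referential term $m_i\braket{G_i-m_i}G_iA_i\cdots$ reproduces the quantity being estimated with one extra factor $\braket{G_i-m_i}\prec(N\eta)^{-1}$ (by~\cref{thm sG local law}), so one copy of $\Psi_k$ on the left is harmlessly absorbed.

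The core step is the estimate of $\E\abs{\braket{\un{W\,\cG}}}^{2p}$ and $\E\abs{(\un{W\,\cG})_{\vx\vy}}^{2p}$ for every $p$, via a \emph{minimalistic} cumulant expansion of the single explicit $W$ against the product of the remaining $2p-1$ factors. The second-cumulant (Gaussian) part splits into two families. In the first, $\partial_{w_{ab}}$ hits another explicit $W$; there are $2p-1$ such ``self-energy'' terms, and after a Hölder/Young step they reabsorb into $\E\abs{\,\cdot\,}^{2p}$, which is what produces the additive ``$1+\dots$'' shape rather than a divergent series. In the second family, $\partial_{w_{ab}}$ hits a resolvent, $\partial_{w_{ab}}G_i=-G_i E_{ab}G_i$, either splitting $\cG$ into two sub-chains of lengths summing to at most $k$, or elongating it to a single chain of length at most $2k$; the subsequent $a,b$-summation is done with the Ward identities $\sum_b\abs{G_{ab}}^2=(\Im G)_{aa}/\eta$ and $\sum_b\abs{(\vx^\ast G)_b}^2=(\vx^\ast\Im G\,\vx)/\eta$. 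Each resulting sub-chain of length $j$ is replaced by its deterministic part---whose size, via~\cref{lemma size M}, exhibits the $\sqrt\eta$-gain from the traceless matrices---plus its fluctuation $\Psi_j\prec\psi_j$, with $j$ running up to $2k$. Because the moment bound has the schematic form $\E\abs{X}^{2p}\lesssim_p (N\eta)^{-cp}\prod(\text{one factor per copy})$, taking the $2p$-th root turns length-$2j$ chain contributions into the square roots $(\psi_{2j}^\av)^{1/2}$, resp.\ $(\psi_j^\iso)^{1/2}$, multiplied by the deterministic prefactor $1/\sqrt{N\eta}$ in the averaged lines and $(N\eta)^{-1/4}$ in the isotropic lines---the weaker isotropic prefactor reflecting that an isotropic chain has one fewer trace available for a Ward reduction. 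Third and higher cumulants each carry an extra $N^{-1/2}$ and, after the same bookkeeping, only feed into the harmless ``$1$''; the expansion is truncated at order $\sim p/\epsilon$ with a trivially negligible tail.

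With these two ingredients, each displayed inequality follows by explicit power counting. For part (i), $k\in\{1,2\}$: in~\cref{eq Psi1av}, cutting and differentiating $\braket{\un{W G_1}A_1}$ produces a length-two averaged chain ($\Rightarrow(\psi_2^\av)^{1/2}$), mixed isotropic pieces of the form $(\,\cdot\,G_1A_1\,\cdot\,)_{\vx\vy}$ ($\Rightarrow\psi_1^\iso$), and the self-referential length-one chain ($\Rightarrow(\psi_1^\av)^{1/2}$), each with the $\sqrt{N\eta}$-bookkeeping dictated by~\cref{Psi def}; the treatment of~\cref{eq Psi2av}, \cref{eq Psi1iso}, \cref{eq Psi2iso} is identical in spirit but additionally generates chains of length three and four (hence the $(\psi_3^{\av/\iso})^{1/2}$, $(\psi_4^{\av/\iso})^{1/2}$ terms) and the $(N\eta)^{-1}$ quadratic terms from two Ward reductions. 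For part (ii), $k>2$: the same expansion of a length-$k$ chain spawns sub-chains of all lengths $1,\dots,2k$; those of length $\le k-2$ are bounded by the a priori value $\psi_j=\sqrt{N\eta}$ built into~\cref{eq a priori k}, while those of length $k-1,\dots,2k$ keep their genuine $\psi_j$, yielding exactly the form of~\cref{eq Psikav}, \cref{eq Psikiso}. The loss of uniformity $(\epsilon,l)\to(\epsilon,l+1)$ is the standard buffer consumed by the grid-plus-continuity argument, since the differentiation introduces new resolvents whose spectral parameters must stay in the admissible domain; the coupling of averaged with isotropic quantities is forced, because differentiating an averaged chain produces isotropic-type pieces and conversely.

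The main obstacle will be the power counting inside the second-cumulant family: one must verify that \emph{every} way the derivative can cut or elongate the chain yields a total $N\eta$-power no worse than claimed, exploiting simultaneously the Ward gain $1/\eta$ per contracted index and the $\sqrt\eta$-gain per traceless matrix (\cref{lemma size M}), and---crucially---without double-counting either gain. The delicate configurations are those where the derivative cuts the chain immediately next to the explicit $W$, so that one of the two resulting pieces has length $0$ or $1$ and must be matched against the \emph{isotropic} hierarchy rather than the averaged one (this is the origin of the $\psi^\iso$ inputs appearing in the averaged inequalities), versus those where the derivative lands on a resolvent far from the cut and produces a single long chain up to length $2k$. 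The second genuinely delicate point is making the self-energy absorption rigorous: showing that the $2p-1$ renewal terms, which formally look like $\E\abs{X}^{2p}\times(\text{small})$, can be moved to the left via Young's inequality with constants that do not deteriorate in the relevant way as $p\to\infty$, so that one really obtains $\Psi_k\prec 1+(\dots)$ and not a geometric series. Everything else is routine resolvent calculus together with the standard properties of stochastic domination.
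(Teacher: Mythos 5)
Your proposal follows the paper's own proof strategy essentially step by step: the renormalized self-consistent equation $G=m-m\,\un{WG}+m\braket{G-m}G$, matching against the non-crossing recursion for $M$ (this is the paper's \cref{lemma M rec}, whose combinatorial content you correctly flag as the place where the Kreweras structure enters), a minimalistic cumulant expansion of the single explicit $W$ inside $\braket{\un{W(GA)^k}}$ tested against $2p-1$ copies of the fluctuation, Ward identities to convert $\sum_{ab}$ into $(N\eta)^{-1/2}$-gains, replacement of sub-chains by $\psi_j$ with $j\le 2k$, and a Young/Hölder step to reabsorb the $\E\lvert X\rvert^{2p-m}$ remainders. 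This is precisely Section~\ref{sec master proof}. One small inaccuracy worth correcting: your "first family," in which $\partial_{w_{ab}}$ hits "another explicit $W$," is not quite what happens — there is only one explicit $W$, and the second-cumulant contraction of that $W$ back into its own chain is exactly what the underline kills; what survives is the derivative landing on a resolvent either in the underlined chain (giving the length-$2k$ Gaussian term $\tilde\Xi_k$) or in one of the remaining $2p-1$ factors $\braket{(GA)^k-M_kA}$ (giving the $\Xi_k$ terms, hence the drop from $\lvert X\rvert^{2p}$ to $\lvert X\rvert^{2p-1-\abs{J\cup J_*}}$). This reading does not change your power-counting scheme, so the gap is cosmetic rather than substantive.
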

Since in~\Cref{prop Psi ineq} resolvent chains of length \(k\) are estimated by resolvent chains of length up to \(2k\) 
we will need the following {\it reduction lemma}  in order avoid an infinite hierarchy of inequalities with higher and higher $k$-indices.
\begin{lemma}[Reduction inequality]\label{lemma doubling}
    Fix \(k\ge 1\) and assume that \(\Psi_n^{\av/\iso}\prec \psi_n^{\av/\iso}\) holds for \(0\le n\le 2k\) 
    \((\epsilon,l)\)-uniformly. Then it holds that 
    \begin{align}\label{eq doubling}
        \Psi_{2k}^\av &\prec\begin{cases}
            (N\eta)^2 + (\psi_k^\av)^2, &\text{\(k\) even} \\
            (N\eta)^2 + N\eta (\psi_{k-1}^\av+\psi_{k+1}^\av) + \psi_{k-1}^\av\psi_{k+1}^\av, &\text{\(k\) odd}.
        \end{cases}\intertext{\((\epsilon,l)\)-uniformly. Moreover,  for \(j\le k\) and for \(k\) even, we have}\label{eq doubling iso}
        \Psi_{k+j}^\iso&\prec
        N\eta\Big(1+\frac{\psi_k^\iso}{\sqrt{N\eta}}\Big)\Big(1+ \frac{ (\psi_{2j}^\av)^{1/2}}{\sqrt{N\eta}}\Big),
    \end{align}
    also \((\epsilon,l)\)-uniformly.
\end{lemma}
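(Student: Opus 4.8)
The plan is to prove both displays by a single Cauchy--Schwarz step, performed either in the spectral decomposition of $W$ or, equivalently, in the Hilbert--Schmidt inner product, which cuts the length-$2k$ (resp.\ length-$(k+j)$) chain into only \emph{two} pieces, in contrast to the $k$ pieces of length two used in~\eqref{4G}. The algebraic tools are: the Ward/square identity $G(z_i)G(z_i)^{*}=\Im G(z_i)/\abs{\Im z_i}$, applied at the seams created by the cut; the resolvent identity, used exactly as in the proof of~\Cref{G^2 lemma} to reduce to the case where all $\Im z_i$ have the same sign; the operator monotonicity $RXR^{*}\le\norm{X}RR^{*}$ for $X\ge0$, together with monotonicity of the trace on positive semidefinite matrices, used to discard the non-traceless factors $A_iA_i^{*}$ that appear at the seams; the assumed a priori bounds $\Psi_n^{\av/\iso}\prec\psi_n^{\av/\iso}$; and the size bounds of~\Cref{lemma size M} for all deterministic pieces.

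For the averaged bound~\eqref{eq doubling}, I would rotate the cyclic trace so that a traceless matrix stands in front, write it as $\braket{XY}$ with $X,Y$ the two halves -- carrying $k$ resolvents each when $k$ is even, and $k\mp1$ resolvents when $k$ is odd, the asymmetric split in the odd case being forced by the parity $\lceil a/2\rceil$ in~\Cref{lemma size M} (this is exactly what turns $(\psi_k^\av)^2$ into $\psi_{k-1}^\av\psi_{k+1}^\av$) -- and apply $\abs{\braket{XY}}\le\braket{XX^{*}}^{1/2}\braket{Y^{*}Y}^{1/2}$. In $\braket{XX^{*}}$ the seam factor $A_iA_i^{*}$ is bracketed between two copies of the same resolvent; discarding it by trace monotonicity, folding the remaining $GG^{*}$ by the Ward identity, and iterating this fold on the resulting palindromic chain reduces $\braket{XX^{*}}$ to (a power of $\eta^{-1}$ times) a chain of length $k$. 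The decisive point is that one must \emph{not} estimate $\abs{\braket{(GA)^{2k}}}$ directly but keep differences from the deterministic terms throughout: subtracting from each half its deterministic matrix-approximation, the purely deterministic contributions recombine -- through the recursive identities for $M$ recorded after~\eqref{eq M def} -- into the deterministic term subtracted in $\Psi_{2k}^\av$, leaving only (i) cross terms (a large deterministic piece of one half against a fluctuation of the other), which after the normalization of~\eqref{Psi def} produce the base term $(N\eta)^2$, and (ii) the product of two fluctuations controlled by~\Cref{lemma size M} and the a priori $\psi_k^\av$ (resp.\ $\psi_{k-1}^\av,\psi_{k+1}^\av$), which produces $(\psi_k^\av)^2$ (resp.\ $\psi_{k-1}^\av\psi_{k+1}^\av$).

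For the isotropic bound~\eqref{eq doubling iso} the two halves are treated \emph{asymmetrically}. Writing the quadratic form as $\langle P^{*}\vx,Q\vy\rangle\le\norm{P^{*}\vx}\norm{Q\vy}$ with $P$ the long block of length $k$ and $Q$ the short block of length $j\le k$, I would keep $P$ at the optimal isotropic level: split $P$ into its deterministic approximation $M_P$ and the remainder, bound $\norm{M_P^{*}\vx}\le\norm{M_P}\lesssim\eta^{-k/2}$ by~\Cref{lemma size M}, and control the remainder through $\Psi_k^\iso\prec\psi_k^\iso$ -- this is the origin of the factor $1+\psi_k^\iso/\sqrt{N\eta}$. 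For the short block I would instead pass to an \emph{averaged} quantity through the crude but cheap bound $\norm{Q\vy}^2=\langle\vy,Q^{*}Q\vy\rangle\le\Tr(Q^{*}Q)=N\braket{QQ^{*}}$, and then estimate $\braket{QQ^{*}}$ -- a chain of length $\sim2j$ after discarding the seam factor and one Ward fold -- by~\Cref{lemma size M} and the averaged a priori bound $\psi_{2j}^\av$, giving $1+(\psi_{2j}^\av)^{1/2}/\sqrt{N\eta}$. Here only the single step $\norm{Q\vy}^2\le N\braket{QQ^{*}}$ is lossy, which is why the isotropic reduction costs only one power of $N\eta$ rather than the two lost in the averaged case.

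The main obstacle I anticipate is organizing this so that the deterministic pieces of the two halves really cancel and the residual loss is \emph{exactly} $(N\eta)^2$, resp.\ $N\eta$. Concretely this requires: (i) a careful treatment of the non-traceless seam factors $A_iA_i^{*}$, which carry no $\sqrt\eta$-smallness and therefore must be split as $\braket{A_iA_i^{*}}I+\mathring{(A_iA_i^{*})}$ and fed into~\Cref{G^2 lemma} (the version of the a priori bounds allowing one non-traceless matrix); (ii) getting the parity right, i.e.\ realizing that for odd $k$ the symmetric cut is inadmissible and one must split into halves of length $k\mp1$; and (iii), whenever the contour representation from the proof of~\Cref{G^2 lemma} is used to reduce a same-sign sub-chain to $\Im G$'s, the upgrade from pointwise-in-$z$ to uniform-in-$z$ bounds via the grid argument around~\eqref{withsup}. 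After the lengthy but routine bookkeeping of the $\eta$-exponents produced by~\Cref{lemma size M}, the two inequalities follow.
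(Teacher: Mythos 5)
There is a genuine gap, and it is precisely in the Cauchy--Schwarz step that you declare to be the heart of the argument. The claimed equivalence between ``Cauchy--Schwarz in the spectral decomposition of $W$'' and ``Cauchy--Schwarz in the Hilbert--Schmidt inner product'' is false in this setting, and the two give qualitatively different outcomes. The paper does \emph{not} apply $\abs{\braket{XY}}\le\braket{XX^*}^{1/2}\braket{YY^*}^{1/2}$ to the two length-$k$ halves. Instead, for even $k$ it writes $(GA)^{2k}=G_1TG_{k/2+1}TG_{k+1}TG_{3k/2+1}T$ with $T:=A(GA)^{k/2-1}$, spectrally decomposes the \emph{four} anchor resolvents to get $\frac1N\sum_{ijml}T_{ij}T_{jm}T_{ml}T_{li}/[(\lambda_i-z_1)\cdots]$, and applies the AM--GM/Cauchy--Schwarz $\abs{T_{ij}T_{jm}T_{ml}T_{li}}\le\tfrac12(\abs{T_{ij}}^2\abs{T_{ml}}^2+\abs{T_{jm}}^2\abs{T_{li}}^2)$, which pairs \emph{opposite} edges of the $4$-cycle of eigenvalue indices. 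After this the sum factorizes into a product of two copies of $N\braket{\abs{G}T\abs{G}T^*}$, each of which is a chain of length $k$ (not $2k$!) whose \emph{matrices are all traceless}. By Lemmas~\ref{absG lemma} and~\ref{lemma size M} this is $\lesssim\eta^{-(k/2-1)}+\psi_k^\av/(N\eta^{k/2})$, giving exactly $\Psi_{2k}^\av\lesssim (N\eta+\psi_k^\av)^2$.

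Your HS Cauchy--Schwarz instead produces $\braket{XX^*}$ with $XX^*=(GA)^k(A^*G^*)^k$, a chain of length $2k$ whose matrices include a non-traceless seam $AA^*$ and a non-traceless cyclic closure $I$, so only $2k-2$ of the $2k$ matrices are traceless. By~\Cref{lemma size M} the deterministic size is then $\eta^{-k}$ rather than the $\eta^{-(k/2-1)}$ obtained above, so after normalization you would obtain $\Psi_{2k}^\av\lesssim N$, which is far weaker than $(N\eta)^2$ in the relevant small-$\eta$ regime $N^{-1+\epsilon}\le\eta\ll N^{-1/2}$ and would not close the subsequent induction in~\Cref{a priori lemma}. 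Your proposed remedy — iterated Ward folding to reduce $\braket{XX^*}$ from length $2k$ down to length $k$ — is no remedy: each fold of $GG^*$ into $\Im G/\eta$ costs a factor $\eta^{-1}$ and shortens the chain by one, so going from $2k$ to $k$ costs $\sim\eta^{-k}$, which is exactly the overexpansion of~\eqref{4G} that the lemma is designed to avoid. A similar problem appears in your isotropic estimate: $\Psi_k^\iso$ controls a single matrix element $(\cdot)_{\vx\vy}$, not the vector norm $\norm{P^*\vx}=\sup_{\vy}\abs{(\cdot)_{\vx\vy}}$, and in any case $\norm{P^*\vx}^2=\braket{\vx,PP^*\vx}$ is a chain of length $2k$ if $P$ has length $k$; the paper's spectral Cauchy--Schwarz instead produces $\braket{\vx,(GA)^{k/2}\abs{G}(AG^*)^{k/2}\vx}$, a genuine length-$k$ isotropic chain because the inserted $\abs{G}$ fills the gap between the two halves of $P$. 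Your treatment of the short block $Q$ via $\norm{Q\vy}^2\le N\braket{QQ^*}$ is, by contrast, essentially correct and corresponds to the paper's $(\psi_{2j}^\av)^{1/2}$ factor.
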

The proofs of~\Cref{prop Psi ineq} and~\Cref{lemma doubling} will be given in Section~\ref{sec master proof}
and Section~\ref{sec:redin}, respectively.

\subsection{Proof of the bounds on \(\Psi^{\av/\iso}\) in~\Cref{final lemma,a priori lemma}}\label{sec lemmata}
\begin{proof}[Proof of~\Cref{a priori lemma}]
    Within the proof we repeatedly appeal to a simple argument we call \emph{iteration}. By this we mean 
    the following procedure. Fix an $\epsilon>0$. Suppose that for any $l\in \N$
     whenever \(X\prec x\) holding \((\epsilon,l)\)-uniformly implies
    \begin{equation}\label{X iter}
        X\prec A + \frac{x}{B} + x^{1-\alpha} C^{\alpha},
    \end{equation}
    \((\epsilon,l+l')\)-uniformly for some constants \(l'\in\N\), \(B\ge N^\delta\), \(A,C>0\), and 
    exponent \(0<\alpha<1\), and we know that \(X\prec N^D\) \((\epsilon,1)\)-uniformly initially
     (here $\delta, \alpha$ and $D$ are $N$-independent 
    positive constants,
    other quantities  may depend on $N$).
    Then  by iterating~\eqref{X iter} finitely many  times  (depending only  on $\delta, \alpha$ and $D$)   we arrive at 
    \begin{equation}\label{XAC}
        X\prec A + C
    \end{equation} 
   \((\epsilon,1+Kl')\)-uniformly. 
    Here \(K\) may depend on \(K=K(\delta,\alpha, D)\) but does not depend on \(\epsilon\). In our application \(B\ge(N\eta)^{1/4}\) and therefore \(\delta\) is practically some order one parameter depending only on the fixed \(\epsilon\) in~\cref{thm local law}.
    
    The proof of~\Cref{a priori lemma}  is a two-step induction on $k$. 
    Our first step is to  establish the induction hypothesis
    \begin{equation}\label{psi12 claim}
        \Psi_{1}^{\av/\iso}\prec 1\le\sqrt{N\eta},\quad \Psi_2^{\av/\iso}\prec\sqrt{N\eta},
    \end{equation}
    \(\epsilon'\)-uniformly for some \(\epsilon'>0\). In fact for $ \Psi_{1}^{\av/\iso}$ we will establish the stronger $\prec 1$ bound immediately. 
   We assume that for some $l\in \N$ \[\Psi_k^{\av/\iso}\prec\psi_k^{\av/\iso}\] \((\epsilon,l)\)-uniformly initially, Then~\cref{eq Psi2av} together with~\cref{eq doubling} implies
    \begin{equation}\label{eq Psi2av 0th step}
        \begin{split}
            \Psi_2^\av &\prec 1+\psi_1^\av+\frac{\psi_{2}^\iso + (\psi_2^\av)^{1/2}+(\psi_4^\av)^{1/2}}{\sqrt{N\eta}}+\frac{(\psi_1^\iso)^2+(\psi_1^\av)^2+\psi_1^\iso(\psi_2^\av)^{1/2}}{N\eta}\\
            &\prec \sqrt{N\eta} + \psi_1^\av + \frac{\psi_{2}^\iso + \psi_2^\av}{\sqrt{N\eta}}+\frac{(\psi_1^\iso)^2+(\psi_1^\av)^2+\psi_1^\iso(\psi_2^\av)^{1/2}}{N\eta}        
        \end{split}
    \end{equation}
    \((\epsilon,l+1)\)-uniformly and hence, using iteration and a Schwarz inequality  $\psi_1^\iso(\psi_2^\av)^{1/2}\le (\psi_1^\iso)^2+\psi_2^\av$ for the last term,
    we get
    \begin{equation}
    \label{eq Psi2av 1st step}
        \begin{split}
            \Psi_2^\av &\prec \sqrt{N\eta} + \psi_1^\av + \frac{\psi_{2}^\iso}{\sqrt{N\eta}}+\frac{(\psi_1^\iso)^2+(\psi_1^\av)^2}{N\eta},
        \end{split}
    \end{equation}
    again \((\epsilon,l+1)\)-uniformly. 
    Next, we consider~\cref{eq Psi2iso} and   eliminate $\psi_3^\iso, \psi_4^\iso$ from it
    by first using~\cref{eq doubling,eq doubling iso} 
    in the form 
    \begin{equation}
        \begin{split}
            \Psi_3^\iso &\prec N\eta \Bigl(1+\frac{\psi_2^\iso}{\sqrt{N\eta}}\Bigr)\Bigl(1+\frac{\psi_2^\av}{N\eta}\Bigr)^{1/2}\\
            &\prec N\eta \Bigl(1+\frac{\psi_2^\iso}{\sqrt{N\eta}}\Bigr)\Bigl(1+\frac{\psi_2^\iso}{(N\eta)^{3/2}}+\frac{(\psi_1^\av)^{2}+(\psi_1^\iso)^{2}}{(N\eta)^{2}}\Bigr)^{1/2},  \\
            \Psi_4^\iso &\prec  N\eta\Bigl(1+\frac{\psi_2^\iso}{\sqrt{N\eta}}\Bigr)\Bigl(1+\frac{\psi_4^\av}{N\eta}\Bigr)^{1/2}\prec (N\eta )^{3/2}\Bigl(1+\frac{\psi_2^\iso}{\sqrt{N\eta}}\Bigr)\Bigl(1+\frac{\psi_2^\av}{N\eta}\Bigr)\\
            &\prec (N\eta )^{3/2}\Bigl(1+\frac{\psi_2^\iso}{\sqrt{N\eta}}\Bigr)\Bigl(1+\frac{\psi_2^\iso}{(N\eta)^{3/2}}+\frac{(\psi_1^\av)^{2}+(\psi_1^\iso)^{2}}{(N\eta)^{2}}\Bigr),
        \end{split}
    \end{equation}
    \((\epsilon,l+1)\)-uniformly, where in the second step we also eliminated $\psi_2^\av$ using~\cref{eq Psi2av 1st step}.   
    Plugging these bounds into~\cref{eq Psi2iso}
    yields
    \begin{equation}\label{eq Psi2iso 0th step}
        \begin{split}
            \Psi_2^\iso&\prec 1+\psi_1^\iso+\frac{\psi_2^\iso+(\psi_1^\iso\psi_3^\iso)^{1/2}+\psi_1^\av}{\sqrt{N\eta}}+\frac{\psi_1^\iso\psi_1^\av}{N\eta}+\frac{(\psi_3^\iso)^{1/2}+(\psi_4^\iso)^{1/2}}{(N\eta)^{1/4}}\\
            &\prec \sqrt{N\eta} + \psi_1^\iso + \frac{\psi_2^\iso}{\sqrt{N\eta}} + \frac{(\psi_1^\iso)^2+(\psi_1^\av)^2}{N\eta} +\sqrt{\psi_2^\iso}(N\eta)^{1/4}\Bigl(1+\frac{\psi_1^\av+\psi_1^\iso}{N\eta}\Bigr). %
        \end{split}
    \end{equation}
    \((\epsilon,l+2)\)-uniformly. By iteration we thus obtain 
    \begin{equation}\label{eq Psi2iso 1st step}
        \begin{split}
            \Psi_2^\iso&\prec \sqrt{N\eta}+\psi_1^\iso + \frac{(\psi_1^\iso)^2+(\psi_1^\av)^2}{N\eta},
        \end{split}
    \end{equation}
    \((\epsilon,l+K)\)-uniformly
    and by feeding~\cref{eq Psi2iso 1st step} back into~\cref{eq Psi2av 1st step} we conclude 
    \begin{equation}\label{psi2av step}
        \Psi_2^\av \prec \sqrt{N\eta} + \psi_1^\av + \frac{(\psi_1^\iso)^2+(\psi_1^\av)^2}{N\eta}
    \end{equation}
    \((\epsilon,l+K')\)-uniformly. By using~\eqref{eq Psi2iso 1st step} in~\cref{eq Psi1iso} we immediately obtain
    \begin{equation}\label{psi1iso step}
        \Psi_1^\iso \prec 1 + \frac{\psi_1^\iso+\psi_1^\av}{\sqrt{N\eta}} + \frac{\psi_1^\iso+\psi_1^\av}{(N\eta)^{3/4}}\prec 1 + \frac{\psi_1^\av}{(N\eta)^{1/2}}
    \end{equation}
   \((\epsilon,l+K'')\)-uniformly and together with~\cref{eq Psi1av} we also have that 
    \begin{equation}\label{psi1av step}
        \Psi_1^\av \prec 1 + \frac{(\psi_2^\av)^{1/2}}{\sqrt{N\eta}}
    \end{equation}
    \((\epsilon,l+K''')\)-uniformly. 
    Finally, by combining~\cref{psi2av step,psi1iso step,psi1av step} we obtain 
    \begin{equation}
        \Psi_2^\av\prec\sqrt{N\eta} + \frac{(\psi_2^\av)^{1/2}}{\sqrt{N\eta}} +  \frac{\psi_2^\av}{(N\eta)^2}\prec\sqrt{N\eta}
    \end{equation}
    and therefore \(\Psi_1^{\av/\iso}\prec 1\) \((\epsilon,l+K'''')\)-uniformly and finally, by~\eqref{eq Psi2iso 1st step}, all statements  
    in the claim~\cref{psi12 claim} hold for \(\epsilon'=\epsilon/2\) uniformly to absorb the factor \(K''''\).
    This  completes the initial step of the induction. In the sequel we refrain from specifying the precise \((\epsilon,l)\)-uniformity since in the end \(\epsilon\) can be chosen arbitrarily small and we only use~\cref{prop Psi ineq} finitely many often.
    
    Now we turn to the induction step: we assume that \(k\ge 4\) is even and that the bounds
    \begin{equation}\label{ind}
        \Psi_{n}^{\av/\iso}\prec \sqrt{N\eta}, \qquad n\le k-2
    \end{equation}
    have already been proved. We will prove the same bounds for $n=k-1, k$.
    
    For any \(j\le k\) and under the assumption~\eqref{ind}
    the reduction inequalities~\cref{eq doubling,eq doubling iso} simplify (recall that \(k\) is even) to
    \begin{equation}\label{j iso doubling}
        \begin{split}
            \Psi_{k+j}^\iso &\prec N\eta\Bigl(1+\frac{\psi_k^\iso}{\sqrt{N\eta}}\Bigr)\Bigl(1+\frac{\psi_{2j}^\av}{N\eta}\Bigr)^{1/2}\\ 
            &\prec N\eta\Bigl(1+\frac{\psi_k^\iso}{\sqrt{N\eta}}\Bigr)\begin{cases}
                \sqrt{N\eta}+ \frac{\psi_j^\av}{\sqrt{N\eta}}, & j\text{ even},\\
                \sqrt{N\eta} + \sqrt{\psi_{j-1}^\av+\psi_{j+1}^\av+\psi_{j-1}^\iso\psi_{j+1}^\av/N\eta}, & j\text{ odd}, 
            \end{cases} \\
            &\prec (N\eta)^{3/2}\Bigl(1+\frac{\psi_k^\iso}{\sqrt{N\eta}}\Bigr)\Bigl(1+ \frac{\psi_k^\av}{N\eta}\Bigr)^{\bm 1(j=k)+\bm1(j=k-1)/2} \prec (N\eta)^{3/2}\Bigl(1+\frac{\psi_k^\iso}{\sqrt{N\eta}}\Bigr)\Bigl(1+ \frac{\psi_k^\av}{N\eta}\Bigr)
        \end{split}
    \end{equation}
    and 
    \begin{equation}\label{j doubling}
        \Psi_{2j}^\av \prec   \left.\begin{cases}(N\eta)^2+
            (\psi_k^\av)^{2},& j=k,\\
            (N\eta)^2+N\eta \psi_k^\av,&  j=k-1,\\
            (N\eta)^2,&\text{else},
        \end{cases}\right\}\prec (N\eta)^2 +  (\psi_k^\av)^{2}.
    \end{equation}
    Then together with~\cref{eq Psikav,eq Psikiso} it follows that 
    \begin{equation}\label{Psik-1aviso 1st step}
        \begin{split}
            \Psi_{k-1}^\av &\prec \sqrt{N\eta} +\frac{\psi_{k-1}^\iso+\sum_{j=k/2}^{k-1} (\psi_{2j}^\av)^{1/2}}{\sqrt{N\eta}}\prec  \sqrt{N\eta} +\frac{\psi_{k-1}^\iso+\psi_k^\av}{\sqrt{N\eta}}  \\
            \Psi_{k-1}^\iso&\prec \sqrt{N\eta} + \frac{\sum_{j=k}^{2k-2}(\psi_j^\iso)^{1/2}}{(N\eta)^{1/4}}\prec \sqrt{N\eta} \Bigl(1+\frac{\psi_k^\iso}{\sqrt{N\eta}}\Bigr)^{1/2}\Bigl(1+\frac{\psi_k^\av}{N\eta}\Bigr)^{1/2}
        \end{split}
    \end{equation}
    and
    \begin{equation}\label{Psikaviso 1st step}
        \begin{split}
            \Psi_{k}^\av &\prec \sqrt{N\eta} + \psi_{k-1}^\av +\frac{\psi_{k-1}^\iso+\psi_{k}^\iso+\sum_{j=k/2}^k (\psi_{2j}^\av)^{1/2}}{\sqrt{N\eta}} \\
            &\prec \sqrt{N\eta} + \psi_{k-1}^\av +\frac{\psi_{k-1}^\iso+\psi_{k}^\iso+\psi_k^\av}{\sqrt{N\eta}} \\
            \Psi_{k}^\iso&\prec \sqrt{N\eta}  + \psi_{k-1}^\iso+ \frac{\psi_{k-1}^\av+(\psi_{k-1}^\iso\psi_{k+1}^\iso)^{1/2}}{\sqrt{N\eta}} + \frac{\sum_{j=k+1}^{2k}(\psi_j^\iso)^{1/2}}{(N\eta)^{1/4}}\\
            &\prec \sqrt{N\eta}  + \psi_{k-1}^\iso+ \frac{\psi_{k-1}^\av+\psi_k^\iso}{\sqrt{N\eta}} + \sqrt{N\eta} \Bigl(1+\frac{\psi_k^\iso}{\sqrt{N\eta}}\Bigr)^{1/2} \Bigl(1+\frac{\psi_k^\av}{N\eta}\Bigr)^{1/2}%
        \end{split}
    \end{equation} 
    where we used the first inequality of~\cref{j iso doubling} to estimate $\psi_{k+1}^\iso$ in the \(\sqrt{\psi_{k-1}^\iso\psi_{k+1}^\iso}\)-term with \(\psi_2^\av=\sqrt{N\eta}\). Iterating~\cref{Psikaviso 1st step} yields 
    \begin{equation}\label{Psikaviso 1.5st step}
        \begin{split}
            \Psi_{k}^\av &\prec \sqrt{N\eta} + \psi_{k-1}^\av +\frac{\psi_{k-1}^\iso+\psi_{k}^\iso}{\sqrt{N\eta}} \\
            \Psi_{k}^\iso&\prec \sqrt{N\eta}  + \psi_{k-1}^\iso +  \frac{\psi_{k-1}^\av+\psi_k^\av}{\sqrt{N\eta}},
        \end{split}
    \end{equation} 
    and by using~\cref{Psik-1aviso 1st step} in~\eqref{Psikaviso 1.5st step} it follows that 
    \begin{equation}\label{Psikaviso 2nd step}
        \begin{split}
            \Psi_{k}^\av &\prec \sqrt{N\eta} + \frac{\psi_{k}^\iso+\psi_k^\av}{\sqrt{N\eta}}+\Bigl(1+\frac{\psi_k^\iso}{\sqrt{N\eta}}\Bigr)^{1/2}\Bigl(1+\frac{\psi_k^\av}{N\eta}\Bigr)^{1/2}\\
            &\prec \sqrt{N\eta} + \frac{\psi_{k}^\iso}{\sqrt{N\eta}}\\
            \Psi_k^\iso &\prec \sqrt{N\eta} \Bigl(1+\frac{\psi_k^\iso}{\sqrt{N\eta}}\Bigr)^{1/2}\Bigl(1+\frac{\psi_k^\av}{N\eta}\Bigr)^{1/2} +  \frac{\psi_k^\av}{\sqrt{N\eta}}\\
            &\prec \sqrt{N\eta}+\sqrt{\psi_k^\av} + \frac{\psi_k^\av}{\sqrt{N\eta}}\prec \sqrt{N\eta}+ \frac{\psi_k^\av}{\sqrt{N\eta}}.
        \end{split}
    \end{equation}
    From~\cref{Psikaviso 2nd step} we immediately conclude \(\Psi_k^{\av/\iso}\prec\sqrt{N\eta}\) and by feeding this back into~\cref{Psikaviso 1st step} finally that 
    \begin{equation}
        \Psi_{k-1}^{\av/\iso}+\Psi_{k}^{\av/\iso}\prec\sqrt{N\eta},
    \end{equation}
    concluding the induction step. 
\end{proof}
\begin{proof}[Proof of~\Cref{final lemma}]
    This follows directly from~\Cref{a priori lemma,prop Psi ineq} and induction on $k$.
\end{proof}

\section{Proof of the master inequalities, \Cref{prop Psi ineq}}\label{sec master proof}
We recall the definition of the \emph{second order renormalisation}, denoted by underlining,
from~\cite{MR4334253}. For functions \(f(W),g(W)\) of the random matrix \(W\) we define 
\begin{equation}\label{underline}
    \underline{f(W)Wg(W)}:=f(W)Wg(W)-\E_{\widetilde W     } \Bigl[ (\partial_{\widetilde W} f)(W) \widetilde W g(W)+ f(W) \widetilde W (\partial_{\widetilde W} g)(W)\Bigr],
\end{equation}
where \(\partial_{\wt W}\) denotes the directional derivative in the direction of a GUE matrix \(\wt W\)
that is independent of $W$. The expectation is w.r.t.\ this GUE matrix.  Note that if $W$ itself is a GUE matrix, 
then $\E  \underline{f(W)Wg(W)}=0$, while for $W$ with a general distribution  this expectation
is independent of the first two moments of $W$;
in other words the underline renormalises  $f(W)Wg(W)$ up to second order.  
We note that underline in~\eqref{underline}  is a well-defined notation 
only when the position of the ``middle'' \(W\)  to which the renormalisation refers
is unambiguous.  This is the case in all of our proof since \(f,g\) will be products of resolvents  not  explicitly 
involving monomials of $W$.

We also note that the directional derivative of the resolvent is given by 
\begin{equation}
    \partial_{\widetilde W} G = - G \widetilde W G,
\end{equation}
furthermore, we have
\begin{equation}
    \E_{\widetilde W } \widetilde W A \widetilde W = \braket{A}\cdot I. 
\end{equation}  
For example, in case of $f= I$ and $g(W)= (W-z)^{-1}=G$ we have 
\[
\un{WG} = WG + \braket{G}G.
\]
Similarly, for $G_i= G(z_i)$ we also have
\[ \un{WG_1 G_2} =  WG_1G_2 + \braket{G_1} G_1 G_2 + \braket{G_1 G_2} G_2, \quad
\un{G_1 WG_2} =  G_1W G_2 + \braket{G_1} G_1 G_2 + \braket{G_2} G_1 G_2
\]
indicating that the definition of the underline in~\eqref{underline} depends on the "left" and "right" functions
$f$ and $g$, and even though $f(W) W g(W) = W f(W) g(W) = f(W) g(W) W$, their second order renormalisations
are not the same.

Using this underline notation and the defining equation for \(m=m_\mathrm{sc}\), we have 
\begin{equation}
    \label{eq:resG}
    G = m - m\un{WG} + m \braket{G-m} G = m - m\un{GW} + m \braket{G-m} G.
\end{equation}

The key idea of the proof of  \Cref{prop Psi ineq}
 is using~\cref{eq:resG} for some \(G_j\) in \(G_1A_1\ldots A_{k-1}G_k\) and extending the renormalisation to the whole product at the expense adding  resolvent products of  lower order.  For example, 
\begin{equation}\label{3G exp ex}
    \begin{split}
        &G_1A_1G_2A_2G_3\Bigl(1+\landauO*{\frac{1}{N\eta}}\Bigr)  \\
        &= m_2 G_1 A_1 A_2 G_3 - m_2 G_1A_1 \un{WG_2}A_2G_3 \\
        &= m_2\Bigl( G_1 A_1 A_2 G_3 + \braket{G_1A_1}G_1G_2A_2G_3 + G_1A_1G_3\braket{G_2 A_2 G_3} - \un{G_1A_1 WG_2A_2G_3}\Bigr),
    \end{split}
\end{equation}
where on the rhs.\ only products of resolvent with one deterministic matrix need to be understood. 
 The renormalisation of the whole product will be handled by cumulant expansion exploiting that its expectation
vanishes up to second order. 
We note that while \(\un{WG}=\un{GW}\), replacing \(G_2\) by \(m_2-m_2\un{G_2W}\) instead of \(m_2-m_2\un{WG_2}\) 
in~\eqref{3G exp ex} still gives a slightly different expression:
\begin{equation}\label{3G exp ex2}
    \begin{split}
        &G_1A_1G_2A_2G_3\Bigl(1+\landauO*{\frac{1}{N\eta}}\Bigr)  \\
        &= m_2 G_1 A_1 A_2 G_3 - m_2 G_1A_1 \un{G_2W}A_2G_3 \\
        &= m_2 \Bigl(G_1 A_1 A_2 G_3 + \braket{G_1A_1G_2}G_1A_2G_3 + G_1A_1G_2G_3\braket{A_2 G_3} - \un{G_1A_1 G_2WA_2G_3}\Bigr).
    \end{split}
\end{equation}
A key ingredient for the proof is the following lemma which shows that the deterministic approximation \(M\) defined in~\cref{eq M def} satisfies the same recursive relations as suggested by~\cref{3G exp ex,3G exp ex2}
after ignoring the full underline term and the $1/(N\eta)$ error terms.

\begin{lemma}\label{lemma M rec}
    Let \(z_1,\ldots,z_k\) by spectral parameters, and \(A_1,\ldots,A_{k-1}\) be deterministic matrices. Then for any \(1\le j\le k\) we have the relations 
    \begin{equation}\label{M rec 1}
        \begin{split}
            M(z_1,\ldots,z_k) &= m_j M(z_1,\ldots,z_{j-1},A_{j-1}A_j,z_{j+1},\ldots,z_k) \\
            &\quad + m_j\sum_{l=1}^{j-1} M(z_1,\ldots,A_{l-1},z_l,I,z_j,A_j,\ldots,z_k) \braket{M(z_l,A_l,\ldots,z_{j-1})A_{j-1}}\\
            &\quad + m_j\sum_{l=j+1}^k M(z_1,\ldots,A_{j-1},z_l,A_l,\ldots,z_k)\braket{M(z_j,A_j,\ldots,z_{l})},
        \end{split}
    \end{equation}
    and 
    \begin{equation}\label{M rec 2}
        \begin{split}
            M(z_1,\ldots,z_k) &= m_j M(z_1,\ldots,z_{j-1},A_{j-1}A_j,z_{j+1},\ldots,z_k) \\
            &\quad + m_j\sum_{l=1}^{j-1} M(z_1,\ldots,A_{l-1},z_l,A_j,\ldots,z_k) \braket{M(z_l,A_l,\ldots,z_{j})}\\
            &\quad + m_j\sum_{l=j+1}^k M(z_1,\ldots,A_{j-1},z_j,I, z_l,A_l\ldots,z_k)\braket{M(z_l,A_j,\ldots,z_{l-1})A_{l-1}}.
        \end{split}
    \end{equation}
\end{lemma}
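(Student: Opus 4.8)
Both identities are purely algebraic facts about the combinatorial object $M$ from \eqref{eq M def}; they record, at the level of deterministic approximations, the two ways of inserting the self-consistent equation \eqref{eq:resG} into the resolvent $G_j$ of a chain, exactly as in \eqref{3G exp ex} and \eqref{3G exp ex2}. The plan is to prove \eqref{M rec 1} in detail and then to deduce \eqref{M rec 2} by repeating the computation with the second form $G=m-m\un{GW}+m\braket{G-m}G$ of \eqref{eq:resG} for $G_j$ (equivalently, via the reflection symmetry $M(z_1,A_1,\ldots,A_{k-1},z_k)^t=M(z_k,A_{k-1}^t,\ldots,A_1^t,z_1)$, which is immediate from \eqref{eq M def} since reversal is an involution of $\NCP[k]$ that commutes with the Kreweras complement, $m[B]$ depends only on the set $\set{z_i\given i\in B}$ by \eqref{msc dd}, and this symmetry exchanges the two forms of \eqref{eq:resG}).

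\textbf{The core computation.} To prove \eqref{M rec 1} I would substitute \eqref{eq M def} into both sides and match the contributions of individual non-crossing partitions. The only nontrivial input is the self-consistent equation $1+zm_\mathrm{sc}(z)+m_\mathrm{sc}(z)^2=0$: via the Leibniz rule for divided differences it furnishes, for every block $B$ of a partition $\pi\in\NCP[k]$ containing the site $j$, an identity expressing $m[B]$ — hence, through the free-cumulant relation \eqref{eq:freecumulant}, the single factor $m_\circ[B]$ occurring in $\prod_{B\in\pi}m_\circ[B]$ — as $m_j=m_\mathrm{sc}(z_j)$ times a sum over the ways of splitting $B$ at $j$. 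Feeding this into \eqref{eq M def} and re-summing over $\pi$ re-organises the left-hand side according to the shape near $j$ of the block of $\pi$ through $j$, yielding exactly the three families on the right of \eqref{M rec 1}: (i) the partitions in which the arcs immediately left and right of $j$ merge across $j$ — in bijection with the non-crossing partitions of the shortened chain with $A_{j-1},A_j$ fused, contributing $m_j M(z_1,\ldots,A_{j-1}A_j,\ldots,z_k)$; (ii) the partitions in which $j$ sits in a block whose left part cuts off a closed sub-loop on the consecutive indices $\set{l,\ldots,j-1}$ — summing over the left endpoint $l<j$ and reading off from $\mathrm{pTr}_{K(\pi)}$ (cf.\ \eqref{eq pTr def}) that the loop's matrices $A_l,\ldots,A_{j-1}$ assemble into the scalar $\braket{M(z_l,A_l,\ldots,z_{j-1})A_{j-1}}$, while the remaining matrices, with an identity inserted between $z_l$ and $z_j$, form $M(z_1,\ldots,z_l,I,z_j,A_j,\ldots,z_k)$, gives the first sum; (iii) the mirror configurations, where $j$ sits in a block cutting off a closed sub-loop on $\set{j,\ldots,l}$ with $l>j$, give the second sum with scalar $\braket{M(z_j,A_j,\ldots,z_l)}$. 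Comparing with \eqref{3G exp ex}, these three families are precisely the deterministic shadows of the three explicit non-underlined terms there; the $\braket{G_j-m_j}G_j$ term and the full underline contribute nothing to $M$.

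\textbf{Main obstacle and an alternative.} The delicate part is entirely bookkeeping: verifying that the Kreweras complement together with the partial-trace rule \eqref{eq pTr def} redistributes the $A_i$ exactly as in (i)--(iii) — in particular tracking which $A_i$ land in the distinguished block $B(k)$ (forming an honest matrix product) versus the remaining blocks (forming normalised traces), a split that changes when a sub-loop is detached, and handling the degenerate endpoints of the sums (e.g.\ $l=j\pm1$, or $j\in\set{1,k}$ read with the conventions $A_0=A_k=I$). A cleaner but essentially equivalent route is induction on $k$: the cases $j=1$ and $j=k$ are the standard first-order ``Dyson'' equation for $M$, already available from \cite[Section~2]{MR4372147} or directly checkable from \eqref{eq M def}, and the interior cases follow by peeling $G_j$ off a chain whose recursion at an adjacent slot is already known, the divided-difference identities ensuring that the scalar prefactors recombine correctly. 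Either way this is an algebraic identity among the matrix-valued functions $M(z_1,A_1,\ldots,z_k)$, and it reproduces the explicit formulas for $M(z_1,A_1,z_2)$ and $M(z_1,A_1,z_2,A_2,z_3)$ displayed before the lemma (e.g.\ \eqref{M rec 1} with $k=3$, $j=2$).
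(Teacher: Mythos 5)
Your sketch takes a genuinely different route from the paper's, and its central step is left as an assertion with a real gap. The paper does not manipulate the free cumulants $m_\circ[B]$ of \eqref{eq:freecumulant} directly; it passes to the weighted non-crossing-graph representation of $M$ from \cite[Eq.~(5.12)]{MR4372147} (recalled here as \eqref{eq M q def}), namely $M/(m_1\cdots m_k)=\sum_{E\in\NCG[1,k]}\pTr_{K(\pi(E))}(\cdots)\,q_E$ with pairwise edge weights $q_{ij}=m_im_j/(1+m_im_j)$. In that representation the recursion at vertex $j$ is a literal set decomposition of $\NCG[1,k]$ according to whether $j$ is isolated and, if not, its extremal neighbour; the partial trace factorizes across the resulting inside/outside sub-graphs; and the only algebra needed is $q_{lj}/(1+q_{lj})=m_lm_j$. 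Your proposed engine --- that $1+zm+m^2=0$, via a Leibniz rule for divided differences, produces a splitting of each $m_\circ[B]$ at the site $j$ which reorganises $\NCP[k]$ into your families (i)--(iii) --- is neither carried out nor obvious: the $m_\circ$ are defined only implicitly by \eqref{eq:freecumulant}, they do not satisfy a Leibniz rule, and the block of a non-crossing partition through $j$ does not factor across $j$ the way a non-crossing graph does (this factorisation is precisely what the $q$-weights buy). The bookkeeping you flag as delicate is the content of the lemma; without the $\NCG$ representation or an equivalent device, your reorganisation does not close. Your fallback of induction on $k$ is equally unsubstantiated: peeling off $G_j$ and invoking the known recursion at an adjacent slot does not, by itself, regenerate the two sums over $l$ in \eqref{M rec 1}.

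You do, however, brush past what could have been a complete alternative: the paper itself remarks that \eqref{M rec 1} can be obtained analytically by taking the GUE expectation of the full expansion \eqref{3G exp ex}, letting $N\to\infty$ so that the underlined term and all errors vanish, and identifying the surviving chains with their $M$'s via \cite[Theorem~3.4]{MR4372147}. Your observation that the three families are \emph{the deterministic shadows} of \eqref{3G exp ex} points exactly there; had you pursued it as the proof rather than a consistency check you would have sidestepped the $m_\circ$ combinatorics entirely. Finally, your reflection-symmetry derivation of \eqref{M rec 2} from \eqref{M rec 1} is fine in spirit (the paper merely says the two are analogous), but it is not immediate from \eqref{eq M def}: the partial trace \eqref{eq pTr def} distinguishes the block $B(k)$, and the reversal $i\mapsto k+1-i$ commutes with the Kreweras complement only up to a cyclic rotation, so the identity $M(z_1,A_1,\ldots,z_k)^t=M(z_k,A_{k-1}^t,\ldots,z_1)$ needs a short separate argument.
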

 We remark that the special $j=1$ case of this lemma was already proven in~\cite[Lemma 5.4]{MR4372147}.
We will present a direct combinatorial proof for the general case %
in~\cref{sec:addproofs}.  Alternatively, \cref{lemma M rec}  can also be deduced
from the original expansions for  resolvent products with the full underline term.
For example, taking the expectation 
of \eqref{3G exp ex} for $W$ being a GUE matrix and letting $N\to\infty$
removes the full underline term and the error terms. 
Since the local law~\cite[Theorem 3.4]{MR4372147} asserts that $G_1A_1G_2A_2G_3$
asymptotically  equals $M(z_1, A_1, z_2, A_2, z_3)$ in the $N\to\infty$ limit
for any fixed spectral parameters, we obtain the corresponding
identity~\eqref{M rec 1} for $k=3$. The argument for general $k$ is identical.

\subsection{Proof of~\Cref{prop Psi ineq}} 
The proofs of the averaged and isotropic bounds are done separately below. For simplicity we do not carry the dependence on the spectral parameters \(z_j\) and traceless matrices \(A_j\) but instead simply write \(G\) and \(A\).
\subsubsection{Averaged bounds~\cref{eq Psi1av,eq Psi2av,eq Psikav}}\label{sec:avve}
Within the proof we repeatedly make use of the a priori bounds~\cref{eq a priori 4,eq a priori k} for \(j\le 2k\). It is important to stress that 
after possibly applying Lemma~\ref{G^2 lemma} no chains of length more than $2k$
arise along our expansion hence the a priori bounds are needed up to index $2k$ only.

By~\cref{eq:resG} for the first $G$ and using the local law \(\abs{\braket{G-m}}\prec 1/(N\eta)\) we obtain
\begin{equation}\label{eq av un repl0}
    \begin{split}
        &\braket{(GA)^k} \Bigl(1+\landauOprec*{(N\eta)^{-1}}\Bigr)\\
        &\quad= m \braket{A(GA)^{k-1}} - m \braket{\un{WGA}(GA)^{k-1}} \\
        &\quad = m \braket{A(GA)^{k-1}} + m\sum_{j=1}^{k-1}\braket{(GA)^j G}\braket{(GA)^{k-j}} - m\braket{\un{W(GA)^k}}.        
    \end{split}
\end{equation}
By assumption~\cref{eq a priori 4,eq a priori k} and~\Cref{G^2 lemma} we have 
\begin{equation}
    \begin{split}
        \abs*{\braket{A(GA)^{k-1}}- \braket{A M_{k-1}A}} &\prec \frac{\psi^\av_{k-2}}{N\eta^{k/2}}+\frac{\psi^\av_{k-1}}{N\eta^{k/2-1/2}}\lesssim  \frac{\psi^\av_{k-1}+\psi^\av_{k-2}}{N\eta^{k/2}}\\
        \abs{\braket{(GA)^jG-M_{j+1}}} &\prec \frac{\psi_j^\av}{N\eta^{j/2+1}}, \\ 
        \abs*{\braket{(GA)^{k-j}}-\braket{M_{k-j}A}}&\prec \frac{\psi_{k-j}^\av}{N\eta^{(k-j)/2}},
    \end{split}    
\end{equation}
so we can replace each resolvent chain by its deterministic $M$-value plus the error term.
In particular, for the middle term in the third line of~\eqref{eq av un repl0} by  a telescopic summation we have
\begin{equation}\label{telescope}
    \begin{split}
        &\abs*{\sum_{j=1}^k\Bigr(\braket{(GA)^j G}\braket{(GA)^{k-j}}-\sum_{j=1}^k\braket{M_{j+1}}\braket{M_{k-j}A}\Big)} \\
        & \prec   \sum_{j=2}^{k-1}\frac{1}{\eta^{\lfloor j/2\rfloor}}\frac{\psi_{k-j}^\av}{N\eta^{(k-j)/2}} + \sum_{j=1}^{k-4} \frac{1}{\eta^{\lfloor(k-j)/2\rfloor}}\frac{\psi_j^\av}{N\eta^{j/2}}+ \frac{\psi_{k-2}^\av}{N\eta^{k/2}}+\frac{\psi_{k-3}^\av}{N\eta^{k/2-1/2}} + \sum_{j=1}^{k-1}\frac{\psi_j^\av\psi_{k-j}^\av}{N^2\eta^{k/2+1}}\\
        &\lesssim \frac{1}{N\eta^{k/2}}\biggl(\psi_{k-2}^\av + \sum_{j=1}^{k-1}\psi_j^\av\Big(1+\frac{\psi_{k-j}^\av}{N\eta}\Bigr) \biggr),
    \end{split}
\end{equation}
where we used that by assumption \(\eta\lesssim 1\), the bounds~\cref{Msize} and \(\braket{M_{2}}=\braket{M_1A}=0\). Together with the deterministic identity~\cref{M rec 1} 
we conclude 
\begin{equation}\label{eq av un repl}
    \begin{split}
        \bigl(\braket{(GA)^k}-\braket{M_kA}\bigr)\Bigl(1+\landauOprec*{(N\eta)^{-1}}\Bigr) ={} - m\braket{\un{W(GA)^k}}  + \landauOprec*{\cE_k^\mathrm{av}}
    \end{split}
\end{equation}
with 
\begin{equation}
    \cE_k^\av := \frac{1}{N\eta^{k/2}}\biggl(1+\sum_{j=1}^{k-1}\psi_j^\av\Bigl(1+\frac{\psi_{k-j}^\av}{N\eta}\Bigr)\biggr),
\end{equation}
where we used \(\abs{\braket{M_2A}}\lesssim1\) and \(\abs{\braket{M_kA}}\lesssim\eta^{1-k/2}\) for \(k\ge3\). 

We recall the cumulant expansion 
\begin{equation}\label{eq cum exp2}
  \begin{split}
    \E w_{ab} f(W) &=  \E\frac{\partial_{ba} f(W) + \sigma \partial_{ab} f(W)}{N}+\sum_{k=2}^R\sum_{q+q'=k} \frac{\kappa^{q+1,q'}_{ab}}{N^{(k+1)/2}} \E \partial_{ab}^q \partial_{ba}^{q'} f(W) + \Omega_R,
  \end{split}
\end{equation}
from~\cite[Eq.~(79)]{MR4334253} with an error term \(\Omega_R\) which for the application in~\cref{eq av cum exp} below can be easily seen to be of size \(\Omega_R=\landauO{N^{-2p}}\) for \(R=12p\).
Here the first fraction represents the Gaussian contribution and \(\sigma=N \E w_{12}^2\in\set{0,1}\) is determined by the complex/real symmetry class of \(W\) due to~\Cref{def:wigner}. The sum in~\cref{eq cum exp2} represents the non-Gaussian contribution and \(\kappa^{p,q}_{ab}\) denotes the joint cumulant of \(p\) copies of \(\sqrt{N}w_{ab}\) and \(q\) copies of \(\sqrt{N}\ov{w_{ab}}\). 
Using~\cref{eq cum exp2,eq av un repl} and distributing the derivatives we obtain
\begin{equation}\label{eq av cum exp}
    \begin{split}
        &\E \abs{\braket{( GA)^k-M_k A }}^{2p} \\
        &= \abs*{-m \E  \braket{\un{W(GA)^k}} \braket{ (GA)^k-M_kA}^{p-1}\braket{ (G^\ast A)^k-M_k^\ast A}^p} + \landauOprec*{(\cE_k^\mathrm{av})^{2p}}\\
        &\lesssim \E |m|\frac{\abs{\braket{(GA)^{2k}G}}+\abs{\braket{(GA)^{k}(G^\ast A)^kG^\ast}}}{N^2} \abs*{\braket{(GA)^k-M_kA}}^{2p-2} \\
        &\quad+ \sum_{\abs{\vl}+\sum (J\cup J_\ast) \ge 2}  \E  \Xi_k^\mathrm{av}(\vl,J,J^\ast)\abs*{\braket{(GA)^k-M_kA}}^{2p-1-\abs{J\cup J_\ast}}+\landauOprec*{(\cE_k^\mathrm{av})^{2p}},
    \end{split}
\end{equation}
where \(\Xi_k^\mathrm{av}(\vl,J,J_\ast)\) is defined as
\begin{equation}\label{Xi av}
    \begin{split}
        \Xi^\mathrm{av}_k &:= |m| N^{-(\abs{\vl}+\sum (J\cup J_\ast)+3)/2}\sum_{ab} \abs{\partial^{\vl} ((GA)^k)_{ba}} \prod_{\vj\in J}\abs{\partial^\vj\braket{(GA)^k}}\prod_{\vj\in J_\ast}\abs{\partial^{\vj}\braket{(G^\ast A)^k}}, 
    \end{split}
\end{equation}
and the summation in~\cref{eq av cum exp} is taken over tuples \(\vl\in\Z_{\ge0}^2\) and multisets of tuples \(J,J_\ast\subset\Z_{\ge0}^2\setminus\set{0,0}\). Moreover, we set \(\partial^{(l_1,l_2)}:=\partial_{ab}^{l_1}\partial_{ba}^{l_2}\), \(\abs{(l_1,l_2)}=l_1+l_2\) and \(\sum J:=\sum_{\vj\in J}\abs{\vj}\). For the first term in the third line of~\eqref{eq av cum exp} we have 
\begin{equation}\label{av Gaussian bound}
    \begin{split}
        |m|\frac{\abs{\braket{(GA)^{2k}G}}+\abs{\braket{(GA)^{k}(G^\ast A)^kG^\ast}}}{N^2} \prec  \frac{1}{N^2\eta^{k}}\Bigl(1+\frac{\psi_{2k}^\mathrm{av}}{N\eta}\Bigr)
    \end{split}
\end{equation}
due to~\Cref{G^2 lemma} and \(\abs{\braket{M_{2k+1}}}\lesssim \eta^{-k}\) from~\eqref{Msize}. 

We now turn to the estimate on \(\Xi^\mathrm{av}\) from~\cref{Xi av}. Due to the Leibniz rule the derivatives can be written as a sum of products of \((aa,bb,ab,ba)\)-entries of resolvent chains of the form \(GAG\cdots AG(A)\), e.g.\ 
\begin{equation}
    \begin{split}
        \partial_{ab}\partial_{ba}(GAGA)_{ba}  &= G_{ba}G_{bb}(GAGA)_{aa}+G_{bb}G_{aa}(GAGA)_{ba}+G_{bb}(GAG)_{aa}(GA)_{ba} \\
        &\quad + G_{ba}(GAG)_{bb}(GA)_{aa}+(GAG)_{ba} G_{bb}(GA)_{aa}+(GAG)_{bb}G_{aa}(GA)_{ba}\\
        \partial_{ba}\partial_{ab}\braket{GA} &= \frac{G_{bb}(GAG)_{aa}+(GAG)_{bb}G_{aa}}{N}.
    \end{split}
\end{equation}
Thus we have the \emph{naive bounds} 
\begin{equation}\label{eq:naiveb}
    \begin{split}
        \abs{\partial^{\vl} ((GA)^k)_{ba}} &\prec \frac{1}{\eta^{(k-1)/2}}\sum_{k_0+\cdots+k_{\abs{\vl}}=k-1}\prod_i\Bigl(1+\frac{\psi_{k_i}^\mathrm{iso}}{\sqrt{N\eta}}  \Bigr)\prec \frac{1}{\eta^{(k-1)/2}}\Bigl(1 + \frac{\psi_{k-1}^\iso}{\sqrt{N\eta}}\Bigr),\\
        \abs{\partial^{\vj} \braket{(G^{(\ast)}A)^k}} &\prec \frac{1}{N\eta^{k/2}}\sum_{k_1+\cdots+k_{\abs{\vj}}=k}\prod_i\Bigl(1+\frac{\psi_{k_i}^\mathrm{iso}}{\sqrt{N\eta}}  \Bigr)\prec \frac{1}{N\eta^{k/2}} \Bigl(1 + \frac{\psi_{k}^\iso+\psi_{k-1}^\iso}{\sqrt{N\eta}}\Bigr),
    \end{split}
\end{equation}
where we used that $\psi_{k_i}^\iso=\sqrt{N\eta}$ for $k_i \le k-2$ by~\eqref{eq a priori k}
by assumption. In the proof of the bounds~\eqref{eq:naiveb} we used that
\begin{equation}
    \label{eq:addandsub}
    \abs*{( (GA)^{k_i})_{ab}}=\abs*{(M_{k_i}A)_{ab}+\landauOprec*{\frac{\psi_{k_i}^{\iso}}{\sqrt{N\eta^{k_i+1}}}}}\prec \frac{1}{\eta^{k_i/2}}\left(1+\frac{\psi_{k_i}^{\iso}}{\sqrt{N\eta}}\right),
\end{equation}
by~\eqref{Psi def} and the norm bound in~\eqref{Msize} for the deterministic term. 
We will use~\eqref{eq:naiveb} for any $k\ne 2$, the $k=2$ case will be done slightly differently later.

For $k\ne 2$, by~\eqref{eq:naiveb} we obtain 
\begin{equation}\label{Xi av bound without Ward}
    \begin{split}
        \abs{\Xi_k^\mathrm{av}} &\prec N^{(2-\abs{\vl}-\sum (J\cup J_\ast))/2}  \frac{\sqrt{N\eta}}{N\eta^{k/2}}\Bigl(\frac{1}{N\eta^{k/2}}\Bigr)^{\abs{J\cup J_\ast}} \biggl(1+\frac{\psi_{k-1}^\iso}{\sqrt{N\eta}} \biggr) \biggl(1 + \frac{\psi_{k}^\iso+\psi_{k-1}^\iso}{\sqrt{N\eta}}\biggr)^{\abs{J\cup J_\ast}}. %
    \end{split}
\end{equation}
Note that  estimating $\Xi_k^\mathrm{av}$ is necessary only if \(\abs{\vl}+\sum (J\cup J_\ast)\ge 2\)
by~\eqref{eq av cum exp},
so the $N$-prefactor in~\eqref{Xi av bound without Ward}
comes with a non-positive power. In fact, if \(\abs{\vl}+\sum (J\cup J_\ast)\ge 3\), then this factor
removes  the $\sqrt{N\eta}$ factor from the numerator, which will be sufficient for our purpose.

In case \(\abs{\vl}+\sum (J\cup J_\ast)=2\) we still wish to remove the $\sqrt{N\eta}$ factor,
so we need to improve~\eqref{Xi av bound without Ward}. We use a standard procedure, called the \emph{Ward improvement},
which relies on the fact that sums of the form $\sum_{ab} ((GA)^n G)_{ab}$  can be estimated more efficiently 
then just estimating each term one by one.
Note that in~\cref{Xi av}, after distributing the derivatives according to the Leibniz rule, necessarily some resolvent chain\footnote{Here the \([A]\) in square brackets indicates an optional matrix \(A\) which may or may not be present.} 
\((GAG\cdots AG[A])\) 
appears with off-diagonal indices \((a,b)\) or \((b,a)\). Indeed, an off-diagonal term comes from  one of the products in~\eqref{Xi av} when 
\(\abs{\vj}=1\) for some \(\vj\in J\cup J_\ast\), and it comes from  the $\partial^\vl ((GA)^k)_{ba}$ factor when  \(\abs{\vl}=0\) 
or \(\abs{\vl}=2\), by parity considerations.
For such off-diagonal resolvent chains we use 
\begin{equation}\label{ex with Ward}
    \begin{split}
        \abs*{\sum_{ab}(G[A])_{ab}} &\le N\Big( \sum_{ab} \abs{(G[A])_{ab}}^2\Big)^{1/2}= N^{3/2} \sqrt{\braket{G[A^2]G^\ast}}\\ 
        &\le N^{3/2} [\norm{A}]\sqrt{\braket{GG^\ast}} \prec  \frac{N^{3/2}}{\eta^{1/2}}\Bigl(1+\frac{(\psi_0^\av)^{1/2}}{\sqrt{N\eta}}\Bigr)\\
        \sum_{ab} \abs{((GA)^n G[A])_{ab}} &\le N^{3/2}\sqrt{\braket{(GA)^{n}G[A^2]G^\ast (AG^\ast)^{n}  }} \\
        &\le [\norm{A}] N^{3/2}\sqrt{\braket{(GA)^{n}GG^\ast (AG^\ast)^{n}  }}\prec \frac{N^{3/2}}{\eta^{(n+1)/2}}\biggl( 1+ \sqrt{\frac{\psi_{2n}^\mathrm{av}}{N\eta}} \biggr)
    \end{split}
\end{equation}
for \(n\ge 1\).
This allows us to gain a factor of \((N\eta)^{-1/2}\) 
compared with the naive bounds
\begin{equation}
    \begin{split}
        \abs*{\sum_{ab}(G[A])_{ab}} &\prec N^2 \Bigl(1+\frac{\psi_0^\iso}{\sqrt{N\eta}}\Bigr)\\
        \sum_{ab} \abs{((GA)^n G[A])_{ab}} &\prec \frac{N^2}{\eta^{n/2}} \Bigl(1+\frac{\psi_n^\mathrm{iso}}{\sqrt{N\eta}} \Bigr).
    \end{split}
\end{equation}
that were used in~\cref{Xi av bound without Ward}, at the expense
at the expense of replacing \(1+\psi_n^\iso/\sqrt{N\eta}\) by \(1+\sqrt{\psi_{2n}^\av/N\eta}\). Thus, in case \(\abs{\vl}+\sum (J\cup J_\ast)=2\) we can also improve upon~\cref{Xi av bound without Ward} by a factor of \((N\eta)^{-1/2}\) and obtain 
\begin{equation}\label{Xi av bound with Ward}
    \begin{split}
        \abs{\Xi_k^\mathrm{av}} &\prec  \Bigl(\frac{1}{N\eta^{k/2}}\Bigr)^{1+\abs{J\cup J^\ast}} \biggl(1+\frac{\psi_{k-1}^\iso+\psi_{k}^\iso + \sum_{j=\lfloor k/2\rfloor}^{k}(\psi_{2j}^\av)^{1/2}}{\sqrt{N\eta}}\biggr)^{\abs{J\cup J^\ast}+1},
    \end{split}
\end{equation}
where we used that  \(\psi_{2j}^\av=\sqrt{N\eta}\) for \(j< \lfloor k/2\rfloor\) 
from~\eqref{eq a priori k}. Combining  this with the earlier discussed \(\abs{\vl}+\sum (J\cup J_\ast)\ge 3\) case, we
obtain~\eqref{Xi av bound with Ward} for all cases.
By plugging~\cref{av Gaussian bound,Xi av bound with Ward} 
into~\cref{eq av cum exp}  we conclude  
\begin{equation}\label{Y1}
    \begin{split}
        &\E\abs{\braket{(GA)^k-M_kA}}^{2p} \prec (\cE_k^\av)^{2p} \\
        &+ \sum_{m=1}^{2p}\Bigl[\frac{1}{N\eta^{k/2}}\Bigl(1+\frac{\psi_{k-1}^\iso+\psi_{k}^\iso + \sum_{j=\lfloor k/2\rfloor}^{k}(\psi_{2j}^\av)^{1/2}}{\sqrt{N\eta}}\Bigr)\Bigr]^{m}\Bigl(\E\abs*{\braket{(GA)^k-M_kA}}^{2p}\Bigr)^{1-m/2p}
    \end{split}
\end{equation}
and get the appropriate estimate $\E\abs{ \cdots}^{2p}$ using Young inequalities. 
Since \(p\) is arbitrary, it follows that 
\begin{equation}\label{Y2}
    \begin{split}
        \abs{\braket{(GA)^k-M_kA}} &\prec  \cE_k^\av + \frac{1}{N\eta^{k/2}}\Bigl(1+\frac{\psi_{k-1}^\iso+\psi_{k}^\iso + \sum_{j=\lfloor k/2\rfloor}^{k}(\psi_{2j}^\av)^{1/2}}{\sqrt{N\eta}}\Bigr)\\
        &\prec \frac{1}{N\eta^{k/2}}\biggl(1+\sum_{j=1}^{k-1}\psi_j^\av+\frac{\psi_{k-1}^\iso+\psi_{k}^\iso + \sum_{j=\lceil k/2\rceil}^{k}(\psi_{2j}^\av)^{1/2}}{\sqrt{N\eta}}\biggr),
    \end{split}
\end{equation}
concluding the proof of~\cref{eq Psi1av,eq Psikav}.  Here we used that at least one factor in the $\psi_j^\av \psi_{k-j}^\av$ product
from  $\cE_k^\av$ is equal to $\sqrt{N\eta}$  by using~\eqref{eq a priori k},  since either $j$ or $k-j$ is smaller or equal than $k-2$ for \(k\ne 2\).

The proof of~\cref{eq Psi2av}, i.e.\ the \(k=2\) case, is identical except that in the second line of~\cref{eq:naiveb}
\begin{equation}\label{derfinal}
    \abs{\partial^j \braket{(G^{(\ast)}A)^2}} \prec \frac{1}{N\eta}\sum_{k_1+\cdots+k_{\abs{j}}=2}\prod_i\Bigl(1+\frac{\psi_{k_i}^\mathrm{iso}}{\sqrt{N\eta}}  \Bigr)\prec \frac{1}{N\eta^{k/2}} \Bigl(1 + \frac{\psi_{2}^\iso}{\sqrt{N\eta}}+\frac{(\psi_{1}^\iso)^2}{N\eta}\Bigr)
\end{equation}
and in \(\cE_2^\av\) there are quadratic terms resulting in \((\psi_1^\iso)^2,(\psi_1^\av)^2\) in~\cref{eq Psi2av}. 
This completes the estimates for the averaged quantities. 

\subsubsection{Isotropic bounds~\cref{eq Psi1iso,eq Psi2iso,eq Psikiso}}\label{sec:isso}
Similarly to~\eqref{eq av un repl0}, for the isotropic local law we start by comparing \(((GA)^k G-M_{k+1})_{\vx\vy}\) and \((\un{GAW(GA)^{k-1}G})_{\vx\vy}\) 
\begin{equation}\label{eq iso un repl0}
    \begin{split}
        &((GA)^kG)_{\vx\vy }\Bigl(1+\landauOprec*{(N\eta)^{-1}}\Bigr) \\
        &\quad= m (GA(AG)^{k-1})_{\vx\vy } - m (GA\un{WG}(AG)^{k-1})_{\vx\vy }\\
        &\quad= m (GA(AG)^{k-1})_{\vx\vy } - m (\un{GAWG(AG)^{k-1}})_{\vx\vy } +m \braket{GA}(G^2(AG)^{k-1})_{\vx\vy }\\
        &\qquad+ m\sum_{j=1}^{k-1} \braket{(GA)^jG} ((GA)^{k-j}G)_{\vx\vy }.
    \end{split}
\end{equation}
We again replace the $G$-chains with their deterministic counterparts using
\begin{equation}
    \begin{split}
        (GA(AG)^{k-1})_{\vx\vy } &= (M(z_1,A^2,z_3,\ldots))_{\vx\vy } + \landauOprec*{\frac{\psi_{k-1}^\iso}{\sqrt{N\eta^{k}}}+\frac{\psi_{k-2}^\iso}{\sqrt{N\eta^{k+1}}}}\\
        &=(M(z_1,A^2,z_3,\ldots))_{\vx\vy } + \landauOprec*{\frac{\psi_{k-1}^\iso+\psi_{k-2}^\iso}{\sqrt{N\eta^{k+1}}}}\\
        \abs*{\braket{GA}(G^2(AG)^{k-1})_{\vx\vy} }&\prec \frac{\psi_1^\av}{N\eta^{1/2}}\frac{1}{\eta^{(k+1)/2}}\Bigl(1+\frac{\psi_{k-1}^\iso}{\sqrt{N\eta}}\Bigr),
    \end{split}
\end{equation}
where we used the upper bound on $(G^2(AG)^{k-1})_{\vx\vy} $ from~\eqref{eq G^2 lemma}. 
By a telescopic replacement we have 
\begin{equation}
    \begin{split}
        &\abs*{\sum_{j=1}^{k-1} \Bigl(\braket{(GA)^jG} ((GA)^{k-j}G)_{\vx\vy }-\braket{M_{j+1}}(M_{k-j+1})_{\vx\vy}\Bigr)}\\
        &\prec \sum_{j=2}^{k-1} \abs{\braket{M_{j+1}}} \frac{\psi_{k-j}^\iso}{\sqrt{N\eta^{k-j+1}}} + \sum_{j=1}^{k-1} \frac{\psi_j^\av}{N\eta^{j/2+1}} \abs{(M_{k-j+1})_{\vx\vy}}+\sum_{j=1}^{k-1} \frac{\psi_j^\av}{N\eta^{j/2+1}}\frac{\psi_{k-j}^\iso}{\sqrt{N\eta^{k-j+1}}} \\
        &\lesssim \sum_{j=1}^{k-1}\frac{\psi_{k-j}^\iso}{\sqrt{N\eta^{k+1}}}\Bigl(1+\frac{\psi_j^\av}{N\eta}\Bigr) + \sum_{j=1}^{k-1} \frac{\psi_j^\av}{N\eta^{k/2+1}}.
    \end{split}
\end{equation}
and together with~\cref{M rec 1} we conclude from~\eqref{eq iso un repl0} that
\begin{equation}
    \begin{split}
        ((GA)^kG-M_{k+1})_{\vx\vy }\Bigl(1+\landauOprec*{(N\eta)^{-1}}\Bigr)&\quad= -m (\un{GAWG(AG)^{k-1}})_{\vx\vy } + \landauOprec*{\cE^\mathrm{iso}_k},
    \end{split}
\end{equation}
where 
\begin{equation}
    \cE^\mathrm{iso}_k:=\frac{1}{\sqrt{N}\eta^{(k+1)/2}} \Bigg(1+
    \sum_{j=1}^{k-1}\Big[\psi_{k-j}^\mathrm{iso}\Big(1+\frac{\psi_j^\mathrm{av}}{N\eta}\Big)+
    \frac{\psi_j^\mathrm{av}}{\sqrt{N\eta}}\Big]+\bm1(k=1)\frac{\psi_1^\mathrm{av}}{\sqrt{N \eta}} \Bigg).
\end{equation}

Thus, 
\begin{equation}\label{eq iso cum exp}
    \begin{split}
        &\E \abs{(( GA)^kG-M_{k+1} )_{\vx\vy }}^{2p} \\
        &= \abs*{-m \E  (\un{GAWG(AG)^{k-1}})_{\vx\vy } ( (GA)^kG-M_{k+1})_{\vx\vy }^{p-1}( (G^\ast A)^kG^\ast -M_{k+1}^\ast )_{\vy\vx }^p}\\
        &\qquad + \landauOprec*{(\cE_k^\mathrm{iso})^{2p}}\\
        &\lesssim \E\wt\Xi_k^\mathrm{iso} \abs*{((GA)^kG-M_{k+1})_{\vx\vy }}^{2p-2} +\landauOprec*{(\cE_k^\mathrm{iso})^{2p}}\\
        &\qquad + \sum_{\abs{\vl}+\sum (J\cup J_\ast) \ge 2}  \E \Xi_k^\mathrm{iso}(\vl,J,J_\ast)\abs*{((GA)^kG-M_{k+1})_{\vx\vy }}^{2p-1-\abs{J\cup J_\ast}},
    \end{split}
\end{equation}
where 
\begin{equation}\label{wt Xi iso}
    \begin{split}
        \wt\Xi_k^\mathrm{iso}&:=|m|\sum_{j=0}^{k} \biggl(\frac{\abs{(G(AG)^jG(AG)^{k-1})_{\vx\vy }  ( G(AG)^{k-j+1} )_{\vx\vy }}}{N} \\
        &\qquad\qquad + \frac{\abs{(G^\ast (AG^\ast)^jG(AG)^{k-1})_{\vy\vy }  ( GA(G^\ast A)^{k-j}G^\ast )_{\vx\vx }}}{N}\biggr)
    \end{split}
\end{equation}
and \(\Xi_k^\mathrm{iso}(\vl,J,J_\ast)\) is defined as
\begin{equation}\label{Xi iso}
    \begin{split}
        \Xi^\mathrm{iso}_k := |m|N^{-(\abs{\vl}+\sum (J\cup J_\ast)+1)/2}\sum_{ab}& \abs{\partial^\vl [(GA)_{\vx a}(G(AG)^{k-1})_{b\vy}]} \\
        & \times \prod_{\vj\in J}\abs{\partial^{\vj}((GA)^kG)_{\vx\vy }}\prod_{\vj\in J_\ast}\abs{\partial^{\vj}((G^\ast A)^kG^\ast)_{\vy\vx}} .
    \end{split}
\end{equation}
For~\cref{wt Xi iso} we estimate
\begin{equation}\label{iso Gaussian bound a priori}
    \begin{split}
        \wt\Xi_k^\iso &\prec \sum_{j=0}^k \Bigl(\norm{M_{k+j}} + \frac{\psi_{k+j-1}^\mathrm{iso}}{\sqrt{N}\eta^{(k+j)/2}}\Bigr)\Bigl(\norm{M_{k-j+2}} + \frac{\psi_{k-j+1}^\mathrm{iso}}{\sqrt{N}\eta^{(k-j+2)/2}}\Bigr) \\
        &\prec   \frac{1}{N\eta^{k+1}} \sum_{j=0}^k\Bigl(1+\frac{\psi_{k+j-1}^\iso}{\sqrt{N\eta}}\Bigr)\Bigl(1+\frac{\psi_{k-j+1}^\iso}{\sqrt{N\eta}}\Bigr).
    \end{split}
\end{equation}

In order to estimate \(\Xi^\mathrm{iso}_k\) we use the entrywise bounds
\begin{equation}\label{iso der bound}
    \begin{split}
        \abs{\partial^\vj ((G^{(\ast)}A)^kG^{(\ast)})_{\vx\vy }} &\prec \frac{1}{\eta^{k/2}}\sum_{k_0+\cdots+k_{\abs{\vj}}=k}\prod_i\Bigl(1+\frac{\psi_{k_i}^\mathrm{iso}}{\sqrt{N\eta}}  \Bigr)\\
        &\prec  \frac{1}{\eta^{k/2}}\Bigl(1 + \frac{\psi_{k}^\iso+\psi_{k-1}^\iso}{\sqrt{N\eta}}\Bigr)\\
        \abs{\partial^\vl (G(AG)^{k-1})_{b\vy}} & \prec \frac{1}{\eta^{(k-1)/2}}\sum_{k_0+\cdots+k_{\abs{\vl}+1}=k-1}\prod_i\Bigl(1+\frac{\psi_{k_i}^\mathrm{iso}}{\sqrt{N\eta}}  \Bigr)\\
        &\prec  \frac{1}{\eta^{(k-1)/2}}\Bigl(1+\frac{\psi_{k-1}^\mathrm{iso}}{\sqrt{N\eta}}  \Bigr).
    \end{split}
\end{equation}
Note that in the second step of the first inequality we tacitly assumed that \(k\ne 2\); the special case $k=2$ will be discussed
at the end of the proof. 
From~\eqref{iso der bound} we directly obtain  the naive bound
\begin{equation}\label{Xi iso bound with one Ward}
    \begin{split}
        \abs{\Xi^\mathrm{iso}_k} &\prec  \frac{\sqrt{N\eta}}{N^{(\abs{\vl}+\sum (J\cup J_\ast)-2)/2}} \Bigl(\frac{\sqrt{N\eta}}{\sqrt{N\eta^{k+1}}}\Bigr)^{1+\abs{J\cup J_\ast}} \Bigl(1+\frac{\psi_{k-1}^\mathrm{iso}}{\sqrt{N\eta}}  \Bigr)\Bigl(1 + \frac{\psi_{k}^\iso+\psi_{k-1}^\iso}{\sqrt{N\eta}}\Bigr)^{\abs{J\cup J_\ast}}.
    \end{split}
\end{equation}
Recalling the definition~\cref{Xi iso} and that we need to estimate $\Xi^\mathrm{iso}_k$ only when $\abs{\vl}+\sum (J\cup J_\ast) \ge 2$
by~\eqref{eq iso cum exp},
we claim that we can improve upon~\cref{Xi iso bound with one Ward} by 
\begin{enumerate}[label=(\alph*)]
    \item\label{casea} \(4\) factors of \((N\eta)^{-1/2}\) in case \(\abs{\vl}=0\) and \(\abs{\vj}=1\) for some \(\vj\in J\cup J_\ast\) (implying \(\abs{J\cup J_\ast}\ge 2\)),
    \item\label{caseb} \(3\) factors of \((N\eta)^{-1/2}\) in case \(\abs{\vl}=0\) and \(\abs{J\cup J_\ast}\ge 1\),
    \item\label{casec} \(3\) factors of \((N\eta)^{-1/2}\) in case \(\abs{\vj}=1\) for some \(\vj\in J\cup J_\ast\),
    \item\label{cased} \(2\) factor of \((N\eta)^{-1/2}\) otherwise, 
\end{enumerate}
at the expense of replacing of a multiplicative factor of \(1+\psi_{k_i}^\iso/\sqrt{N\eta}\)
by \(1+(\psi_{2k_i}^\mathrm{iso})^{1/2}/(N\eta)^{1/4}\) for each such improvement.
Indeed, estimating 
\begin{subequations}
    \begin{align} \nonumber
        \sum_{a} \abs{((GA)^n G)_{\vx a}} &\le  \sqrt{N} \sqrt{\sum_{a} \abs{((GA)^n G)_{\vx a}}^2} \le N^{1/2} \sqrt{((GA)^{n}GG^\ast (AG^\ast)^n)_{\vx\vx}} \\\label{iso ward1}
        &\prec \sqrt{\frac{N}{\eta^{n+1}}}\biggl( 1+ \frac{\psi_{2n}^\mathrm{iso}}{\sqrt{N\eta}} \biggr)^{1/2}   \\\nonumber
        \sum_{a} \abs{((GA)^n G)_{\vx a}} \abs{((GA)^m G)_{\vy a}} &\le \sqrt{((GA)^{n}GG^\ast (AG^\ast)^n)_{\vx\vx}}\sqrt{((GA)^{m}GG^\ast (AG^\ast)^m)_{\vx\vx}}\\\label{iso ward2}
        &\prec \frac{1}{\sqrt{\eta^{n+1}}}\biggl( 1+ \frac{\psi_{2n}^\mathrm{iso}}{\sqrt{N\eta}} \biggr)^{1/2}\frac{1}{\sqrt{\eta^{m+1}}}\biggl( 1+ \frac{\psi_{2m}^\mathrm{iso}}{\sqrt{N\eta}} \biggr)^{1/2}    
    \end{align}
\end{subequations}
gains factors of \((N\eta)^{-1/2}\) and  \((N\eta)^{-1}\) respectively, compared to the naive bounds 
\begin{equation}
    \begin{split}
        \sum_{a} \abs{((GA)^n G)_{\vx a}} &\prec \frac{N}{\eta^{n/2}}\biggl( 1+ \frac{\psi_{n}^\mathrm{iso}}{\sqrt{N\eta}} \biggr)^{1/2}   \\
        \sum_{a} \abs{((GA)^n G)_{\vx a}} \abs{((GA)^m G)_{\vy a}} &\prec \frac{N}{\eta^{n/2+m/2}}\biggl( 1+ \frac{\psi_{n}^\mathrm{iso}}{\sqrt{N\eta}} \biggr)^{1/2}\biggl( 1+ \frac{\psi_{m}^\mathrm{iso}}{\sqrt{N\eta}} \biggr)^{1/2},
    \end{split}
\end{equation}
for one and two off-diagonal chains per summation index.  Similar gains are possible for the summation over the $b$-index. 
We call a chain evaluated in \(\vx,a\) or \(\vy,a\) an \(a\)-chain (as in~\eqref{iso ward1}-\eqref{iso ward2}),
and a chain evaluated in \(\vx,b\) or \(\vy,b\) a \(b\)-chain.

We now check that, when performing the $a$  and $b$ summations,
in each of the cases~\cref{casea,caseb,casec,cased} the 
gains~\cref{iso ward1,iso ward2} can be used sufficiently often to obtain the claimed number of $(N\eta)^{-1/2}$ factors. 
Note that even if  there were many \(a\)-chains, a gain is possible from at most two of them. 
\begin{itemize}
    \item[\cref{casea}] Here both the \(\vl\)-factor $[(GA)_{\vx a}(G(AG)^{k-1})_{b\vy}]$ (see~\cref{Xi iso}) and 
    the \(\vj\)-factor $\partial^{\vj}((GA)^kG)_{\vx\vy }$, after performing the derivative,   
    contain  exactly one \(a\)- and one \(b\)-chain each. Hence~\cref{iso ward2} can be used for both summations, 
    and we gain four factors.
    \item[\cref{caseb}] Here the \(\vl\)-factor contains one \(a\)-chain and one \(b\)-chain, while the \(\vj\)-factor contains either an \(a\)- or \(b\)-chain, and thus both~\cref{iso ward1,iso ward2} can be used once for the $a$ and once for the $b$-summation, gaining three factors. 
    \item[\cref{casec}] Due to \(\abs{\vj}=1\), the \(\vj\)-factor contains one \(a\)- and one \(b\)-chain, while the \(\vl\)-factor 
    contains either an \(a\)- or \(b\)-chain, and thus both~\cref{iso ward1,iso ward2} can be used once,
    gaining three factors. 
    \item[\cref{cased}] The \(\vl\)-factor contains either one \(a\)- and one \(b\)-chain, or two \(a\)-chains, or two \(b\)-chains. In the first case we use~\cref{iso ward1} twice, and in the latter two cases we use~\cref{iso ward2} once in order to gain two factors in total.  
\end{itemize}

Now we collect these improvements for~\eqref{Xi iso bound with one Ward}.
If \(\abs{\vl}+\sum (J\cup J_\ast)-\abs{J\cup J_\ast}=0\), then we are in case~\cref{casea} and can gain \(4\) factors. If \(\abs{\vl}+\sum (J\cup J_\ast)-\abs{J\cup J_\ast}=1\), then either \(\abs{\vl}=0\) and we are in case~\cref{caseb}, or \(\abs{\vj}=1\) for all \(\vj\in J\cup J_\ast\) and we are in case~\cref{casec}, yielding three gained factors in both cases.
Finally, if \(\abs{\vl}+\sum (J\cup J_\ast)-\abs{J\cup J_\ast}\ge 2\), then case~\cref{cased} applies 
with a two factor gain. Note that the fewer gains are compensated by the higher power of $1/N$ in 
the prefactor   in~\eqref{Xi iso bound with one Ward}.
Altogether we can conclude that 
\begin{equation}\label{Xi iso bound with Ward}
    \abs{\Xi^\mathrm{iso}_k} \prec \Bigl(\frac{1}{N\eta^{k+1}}\Bigr)^{(1+\abs{J\cup J_\ast})/2} \biggl(1+\frac{\psi_{k-1}^\iso+\psi_k^\iso}{\sqrt{N\eta}}+\frac{(\psi_{k+1}^\iso)^{1/2}+\cdots+(\psi_{2k}^\iso)^{1/2}}{(N\eta)^{1/4}}\biggr)^{\abs{J\cup J_\ast}+1}.
\end{equation}
By plugging~\cref{iso Gaussian bound a priori,Xi iso bound with Ward} into~\cref{eq iso cum exp} we conclude~\cref{eq Psi1iso,eq Psikiso}. 
This proves~\cref{eq Psi1iso,eq Psikiso}. 

For the special $k=2$ case, i.e. for the proof of~\cref{eq Psi2iso} we note that
in the first equality of~\cref{iso der bound} and in the estimate on \(\cE_k^\iso\) there are additional quadratic terms \((\psi_1^\iso)^2\) and \(\psi_1^\iso\psi_1^\av\) but otherwise the proof remains unchanged.\qed

\section{Proof of the reduction inequalities, \cref{lemma doubling} }
\label{sec:redin}

In order to prove~\cref{lemma doubling} we first infer local laws for resolvent chains including some absolute value \(\abs{G}\) from resolvent chains without absolute value. To formulate the precise statement,  for any choices 
of \(g_i(x)\in\set{1/(x-{z_i}),1/\abs{x-z_i}}\) we first generalise~\cref{eq M def} to 
\begin{equation}\label{defM g}
    M(g_1,A_1,g_2,\ldots,A_{k-1},g_{k}) := \sum_{\pi\in\NCP[k]} \pTr_{K(\pi)} (A_1,\ldots,A_{k-1}) \prod_{B\in\pi} \mathrm{sc}_\circ[B],
\end{equation}
where \(\mathrm{sc}_\circ\) is the free cumulant function of \(\mathrm{sc}[i_1,\ldots,i_n]:=\braket{g_{i_1}\cdots g_{i_k}}_\mathrm{sc}\). We note that the bounds~\cref{Msize} and their proofs verbatim also apply to this more generalised \(M\). The following lemma generalises~\cref{G^2 lemma} to absolute values.
\begin{lemma}\label{absG lemma}
    Fix \(\epsilon>0\) and \(\ell,k>0\) and assume that for \(1\le j \le k\) a priori bounds 
    \begin{equation}\label{eq a priori j}
        \Psi_j^\av(\bm z_j,\bm A_j)\prec \psi_j^\av,\qquad \Psi_j^\iso(\bm z_j,\bm A_{j},\vx,\vy)\prec \psi_j^\iso
    \end{equation}
    have been established \emph{\((\epsilon,\ell)\)-uniformly in traceless matrices}. Then \(z_1,\ldots,z_{k+1}\in\C\) with \(\eta=\min_i\abs{\Im z_i}\) and \(G_i\in\set{G(z_i),\abs{G(z_i)}}\) and corresponding \(g_i(x)\in\set{1/(x-z_i),1/\abs{x-z_i}}\) it holds that  
    \begin{equation}\label{eq absG lemma}
        \begin{split}
            \braket{G_1B_1\cdots G_kB_{k}}&=\braket{M(g_1,B_1,\ldots,B_{k-1},g_k)B_k} + \landauOprec*{\frac{\sum_{j=a}^{k}\psi_j^\av \wedge 1}{N\eta^{k-a/2}}}\\
            \Bigl(G_1B_1G_2\cdots B_{k}G_{k+1}\Bigr)_{\vx\vy} &= M(g_1,B_1,\ldots,B_k,g_{k+1})_{\vx\vy} + \landauOprec*{\frac{\sum_{j=a}^{k}\psi_j^\iso\wedge 1}{\sqrt{N}\eta^{k-a/2+1/2}}},
        \end{split}
    \end{equation}
  \((\epsilon,\ell+1)\)-uniformly in vectors $\vx, \vy$  and deterministic matrices \(B_1,\ldots,B_k\), out of which \(a\) are traceless. Furthermore, if all the $B_1,\dots,B_k$ are traceless then \eqref{eq absG lemma} holds \((\epsilon,\ell)\)-uniformly.
\end{lemma}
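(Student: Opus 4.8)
The plan is to strip off every absolute value by an integral representation that writes \(\abs{G(z)}\) as a superposition of imaginary parts of ordinary resolvents at spectral parameters with imaginary part \(\ge\eta\), and then to feed the resulting genuine resolvent chains into \Cref{G^2 lemma}. By \(\abs{G(z)}=\abs{G(\bar z)}\) I may assume \(\Im z_i>0\) in every slot where \(G_i=\abs{G(z_i)}\); if there is no such slot the statement is precisely \Cref{G^2 lemma}. In the general-\(B\) case I will, at the end, invoke the averaged part of \Cref{G^2 lemma} on the expanded chains (this being the only step incurring the loss \(\ell\to\ell+1\)), while if all \(B_i\) are already traceless I instead apply the hypotheses \eqref{eq a priori j} directly, which yields the final \((\epsilon,\ell)\)-uniform assertion.

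\textbf{Integral representation.} For \(\Im z=\eta_0>0\) I use \(a^{-1/2}=\pi^{-1}\int_0^\infty t^{-1/2}(a+t)^{-1}\dif t\) with \(a=\abs{W-z}^2=(W-\Re z)^2+\eta_0^2\). Since \(a+t=\abs{W-\zeta_t}^2\) for \(\zeta_t:=\Re z+\ii\sqrt{\eta_0^2+t}\) and \(\abs{W-\zeta_t}^{-2}=G(\zeta_t)G(\zeta_t)^\ast=\Im G(\zeta_t)/\sqrt{\eta_0^2+t}\), one gets
\[
\abs{G(z)}=\frac1\pi\int_0^\infty\frac{\Im G(\zeta_t)}{\sqrt t\,\sqrt{\eta_0^2+t}}\dif t .
\]
The integrand decays like \(t^{-3/2}\); truncating the integral at \(t=N^{100}\) leaves a norm error \(\lesssim N^{-50}\), negligible against every error term below, and for \(t\le N^{100}\) the parameter \(\zeta_t\) is admissible with \(\eta\le\Im\zeta_t\le N^{51}\). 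Substituting the truncated representation into each absolute-value slot and writing \(\Im G(\zeta_t)=(2\ii)^{-1}\bigl(G(\zeta_t)-G(\bar\zeta_t)\bigr)\), the chain \(G_1B_1\cdots G_kB_k\) becomes an average over the \(t\)-variables, against \(\prod_i\frac{\dif t_i}{\pi\sqrt{t_i}\sqrt{\eta_i^2+t_i}}\) on \([0,N^{100}]^m\), of at most \(2^m\) genuine resolvent chains \(G(w_1)B_1\cdots G(w_k)B_k\) with all \(w_j\) admissible and of imaginary part \(\ge\eta\).

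\textbf{Reduction to \Cref{G^2 lemma}.} The averaged part of \Cref{G^2 lemma} (with \(b=0\)) writes each such chain as \(\braket{M(w_1,B_1,\ldots,z_k)B_k}+\landauOprec*{\frac{\sum_{j=a}^k\psi_j^\av}{N\eta_w^{k-a/2}}}\) with \(\eta_w:=\min_j\abs{\Im w_j}\ge\eta\); integrating this bound back over the \(t_i\)'s costs only \(\prod_i\int_0^{N^{100}}(\sqrt{t_i}\sqrt{\eta_i^2+t_i})^{-1}\dif t_i\lesssim(\log N)^m\), harmless for \(\prec\). The deterministic parts reassemble into \(\braket{M(g_1,B_1,\ldots,g_k)B_k}\): by the Möbius-invertible relation \eqref{eq:freecumulant}, \(M\) is a polynomial in the divided differences \(m_\mathrm{sc}[z_B]=\int\rho_\mathrm{sc}(x)\prod_{i\in B}(x-z_i)^{-1}\dif x\) (cf.\ \eqref{msc dd}) in every monomial of which each index occurs in exactly one factor, so the \(t_i\)-integrals can be carried inside the \(x\)-integral, where they turn \((x-z_i)^{-1}\) into \(g_i(x)=\abs{x-z_i}^{-1}\); this is exactly \eqref{defM g}. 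The isotropic bound is proven identically, using the isotropic halves of \Cref{G^2 lemma} and integrating the corresponding \(M\)-identities.

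\textbf{The \(\wedge1\) refinement and the large-scale part — the main obstacle.} The above already gives the claim with \(\sum_{j=a}^k\psi_j^\av\) in place of \((\sum_{j=a}^k\psi_j^\av)\wedge1\); the sharper form is what makes the reduction inequalities \Cref{lemma doubling} usable (there some \(\psi_j\) are as large as \(\sqrt{N\eta}\)), so no a priori bound exceeding \(1\) may be propagated. On the range \(t_i\ge1\) the resolvent \(G(\zeta_{t_i})\) lives on scale \(\sqrt{\eta_i^2+t_i}\gtrsim1\): there I write \(G(\zeta_{t_i})=m(\zeta_{t_i})I+\bigl(G(\zeta_{t_i})-m(\zeta_{t_i})I\bigr)\); the \(m(\zeta_{t_i})\)-term shortens the chain by one and supplies the gain \(\abs{m(\zeta_{t_i})}\le(\eta_i^2+t_i)^{-1/2}\), so that the residual integral \(\int_1^{N^{100}}(\sqrt{t_i}(\eta_i^2+t_i))^{-1}\dif t_i\lesssim1\) converges without logarithmic help and the contribution is controlled by the a priori bounds for shorter chains, while the remaining fluctuation \(G(\zeta_{t_i})-m(\zeta_{t_i})I\), a scale-\(\gtrsim1\) object, is absorbed — together with the neighbouring resolvents — into the multi-resolvent local law \eqref{loc1}--\eqref{loc2} in the \(d\ge1\) regime, which carries its own \(\sqrt{t_i}\)-decay. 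Since the \(d\ge1\) case of \Cref{thm local law} is established independently (Appendix~\ref{app:largeeta}), no circularity arises; this is precisely the use of \eqref{loc1} for \(d\ge1\) announced at the start of the section. The complementary range \(t_i\le1\), where every scale is comparable to \(\eta\), is treated exactly as above, and assembling the \(\le2^m\) pieces gives \eqref{eq absG lemma}. I expect this last step — arranging the large-\(t\) regime so that the far spectral parameter's decay is fully exploited while tracking exactly which neighbouring resolvents get absorbed into the \(d\ge1\) law — to be the genuine difficulty; Steps above are essentially bookkeeping around \Cref{G^2 lemma}.
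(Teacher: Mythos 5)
The first two steps of your proposal—the integral representation of \(\abs{G}\) as a superposition of resolvents at larger imaginary parts, the truncation of the integral, and the reassembly of the deterministic part into \(M(g_1,\ldots,g_k)\) by carrying the \(t\)-integration through the multilinear, free-cumulant structure of \(M\)—match the paper's proof, modulo the inconsequential reparametrisation \(s\mapsto s^2\). The paper uses the identity
\(\abs{G(E+\ii\eta)}=\tfrac{1}{\ii\pi}\int_0^\infty (\eta^2+s^2)^{-1/2}\bigl(G(E+\ii\sqrt{\eta^2+s^2})-G(E-\ii\sqrt{\eta^2+s^2})\bigr)\dif s\)
and the matching multilinear identity for \(M\); this is your representation in disguise.

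Your Step 3, the "\(\wedge 1\)'' refinement, is where you genuinely depart from the paper, and I think there is a real gap. The paper does \emph{not} split \(G(\zeta_t)=m(\zeta_t)I+(G-m)\): it keeps the post-representation resolvent chain intact, partitions the \(s,t\)-domain into \(\set{\min(s,t)\le1}\) (where it invokes the a priori bound \(\Psi_k^\av\prec\psi_k^\av\) on the full chain, and the \(\log\)-integrable weight bounds the contribution by \(\psi_k^\av/(N\eta^{k/2})\)) and \(\set{s,t\ge1}\) (where it invokes the already-established \(d\ge1\) multi-resolvent local law on the full chain, giving a \(\psi\)-free contribution). Your \(m I\)-splitting has two problems it cannot recover from. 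First, replacing a far resolvent by \(m(\zeta_t)I\) contracts the chain and merges two adjacent traceless \(A\)'s into \(A^2\), which is generically \emph{not} traceless; the resulting estimate from \Cref{G^2 lemma} then produces \(\psi_{k-1}^\av\) and \(\psi_{k-2}^\av\) rather than \(\psi_k^\av\wedge1\). Since the a priori bounds in play in \Cref{lemma doubling} allow \(\psi_j^\av\) to be as large as \(\sqrt{N\eta}\), this is \emph{not} bounded by one, and you would be propagating exactly the large constants the \(\wedge 1\) is designed to suppress. Second, in the \(a=k\) all-traceless case the paper obtains the \emph{\((\epsilon,\ell)\)-uniform} statement precisely by avoiding any appeal to \Cref{G^2 lemma}; your contraction of the chain forces a general-\(B\) application of \Cref{G^2 lemma} even in the all-traceless case, which costs one unit of \(\ell\) and therefore fails to reproduce the final sentence of the lemma. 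Finally, you yourself flag the absorption of the \((G-m)\) fluctuation into the \(d\ge1\) law as "the genuine difficulty'': indeed, when \(k_1,k_2>0\) the chain still contains near-spectrum resolvents, so the \(d\ge1\) law does not apply to the full chain and the step as written is not a proof. The paper's route—apply the a priori bound to the \emph{unshortened} chain, and the \(d\ge1\) law only in the regime where every slot is far—sidesteps both of the problems your decomposition creates.
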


\begin{proof}
    The proof is analogous to the special case given in~\cref{G^2 lemma}, with the additional step first of representing any \(\abs{G}\) via 
    \begin{equation}\label{absG rep}
        \abs{G(E+\ii\eta)} = \frac{1}{\ii\pi}\int_0^\infty \frac{G(E + \ii(\eta^2+s^2)^{1/2})-G(E - \ii(\eta^2+s^2)^{1/2})}{(\eta^2+s^2)^{1/2}}\dif s
    \end{equation}
    as an integral over resolvents. 
   Here we used the identity 
      \begin{equation}
        \frac{1}{\abs{x-\ii\eta}} = \frac{1}{\ii\pi}\int_0^\infty \biggl(\frac{1}{x-\ii(\eta^2+s^2)^{1/2}}-\frac{1}{x-\ii(\eta^2+s^2)^{1/2}}\biggr)\frac{1}{(\eta^2+s^2)^{1/2}} \dif s.
    \end{equation}
    We note that \(M\) for \(g(x)=\abs{x-E-\ii\eta}^{-1}\) satisfies the analogous identity 
    \begin{equation}\label{M g z equiv}
        M(\ldots,g,\ldots) = \frac{1}{\ii\pi}\int_0^\infty \frac{M(\ldots,E+\ii(\eta^2+s^2)^{1/2},\ldots)-M(\ldots,E-\ii(\eta^2+s^2)^{1/2},\ldots)}{(\eta^2+s^2)^{1/2}}\dif s
    \end{equation}
    by multi-linearity.
        In~\cref{M g z equiv} the lhs.\ is understood in the sense of~\eqref{defM g}, and the rhs.\ in the sense of~\cref{eq M def}. 
    
    It remains to estimate the integral of the error term obtained from using~\cref{absG rep} for each \(\abs{G}\) and replacing the resulting resolvent chains by their deterministic equivalents. 
    From now on we only consider the case $a=k$ in the averaged version (the isotropic one is analogous). Proceeding as in~\cref{G^2 lemma}, the general case $0\le a \le k-1$ is completely analogous and so omitted. The application of~\cref{G^2 lemma} is the only reason why \eqref{eq absG lemma} holds \((\epsilon,\ell+1)\)-uniformly. The proof that now follows for $a=k$ holds  \((\epsilon,\ell)\)-uniformly. For notational simplicity in the following we denote all the deterministic matrices by $A$ and resolvents by $G$ (even if they are evaluated at different spectral parameters). For concreteness we assume that only two $g_i(x)$'s are equal to $|x-z_i|^{-1}$, the rest is $(x-z_i)^{-1}$, i.e. $k_1+k_2+2=k$.  
  Introducing the shorthand notations $z_{i,s}:=E_i+\ii\sqrt{\eta_i^2+t^2}$, $M(z_{1,s},z_{k_1+2,s}):=M(z_{1,s},A,z_2,\dots,z_{k_1+1},A, z_{k_1+2,s},A, z_{k_1+3},\dots, z_k)$, we have
    \begin{equation}
        \begin{split}
            &\big|\braket{|G(E_1+\ii\eta_1)|A(GA)^{k_1}|G(E_{k_1+2}+\ii\eta_{k_1+2})|A(GA)^{k_2}}-\braket{M(g_1,A,\dots,A,g_k)A}\big| \\
            &\lesssim \Bigg|\iint_0^{\infty} \braket{ G(z_{1,s})A(GA)^{k_1} G(z_{k_1+2,s})A(GA)^{k_2}-M(z_{1,s},z_{k_1+2,t})A}\, \frac{\dif s\dif t}{\sqrt{\eta_1^2+s^2}\sqrt{\eta_{k_1+2}^2+t^2}}\Bigg| \\
            &\lesssim\Bigg|\iint_0^{N^{5k}} \braket{ G(z_{1,s})A(GA)^{k_1} G(z_{k_1+2,s})A(GA)^{k_2}-M(z_{1,s},z_{k_1+2,t})A}\, \frac{\dif s\dif t}{\sqrt{\eta_1^2+s^2}\sqrt{\eta_{k_1+2}^2+t^2}}\Bigg| \\
            &\quad+\mathcal{O}\left(N^{-2}\right) \\
            &\prec \frac{\psi_k^\av}{N\eta^{k/2}}\left(\int_0^1\int_0^{N^{5k}}+\int_0^{N^{5k}}\int_0^1\right) \frac{\dif s\dif t}{\sqrt{\eta_1^2+s^2}\sqrt{\eta_{k_1+2}^2+t^2}} \\
            &\quad+\frac{1}{N\eta^k}\int_1^{N^{5k}}\int_1^{N^{5k}} \frac{\dif s\dif t}{\sqrt{\eta_1^2+s^2}\sqrt{\eta_{k_1+2}^2+t^2}} +\mathcal{O}\left(N^{-2}\right) \\
            &\prec \frac{\psi_k^\av\wedge 1}{N\eta^{k/2}}.
        \end{split}
    \end{equation}
    Note that to go from the second to the third line we used the trivial norm bound $\norm{G(E+\ii\eta)}\lesssim \eta^{-1}$ to remove the very large $s$ and $t$ regime (and a similar bound for the deterministic term). Additionally, in the penultimate inequality we used \eqref{eq a priori j} to bound the regime $\eta\le 1$, with $\eta:=\min_i |\Im z_i|$, and the averaged local law~\eqref{loc1} in the regime $\eta\ge 1$. Alternatively, we could have used
  \cite[Theorem 3.4]{MR4372147} in this latter regime. 
\end{proof}

\begin{proof}[Proof of~\Cref{lemma doubling}]
    
    Similarly to Section~\ref{sec master proof}, to make the presentation simpler we do not carry the dependence on the spectral parameters \(z_j\) and traceless matrices \(A_j\) but instead simply write \(G\) and \(A\). 
    
    We first start with the bound in the average case  and
     we distinguish two cases depending on whether $k$ is even or odd. Let $\{\lambda_i\}_{i\in [N]}$ be the eigenvalues of $W$, and let ${\bm u}_i$ be the corresponding eigenvectors. For even $k$, using the shorthand notation $T:=A(GA)^{k/2-1}$, we have
    \begin{equation}
        \label{eq:holde}
        \begin{split}
            &\Psi_{2k}^\mathrm{av} = N\eta^{k} \abs{\braket{(GA)^{2k} -M_{2k}A}} \\
            &\lesssim N\eta+\frac{N\eta^k}{N}\left|\sum_{ijml}\frac{\braket{{\bm u}_i, T{\bm u}_j}\braket{{\bm u}_j, T{\bm u}_m}\braket{{\bm u}_m, T{\bm u}_l}\braket{{\bm u}_l, T{\bm u}_i}}{(\lambda_i-z_1)(\lambda_j-z_{k/2+1})(\lambda_m-z_{k+1})(\lambda_l-z_{(3k)/2+1})}\right| \\
            &\lesssim N\eta+ \frac{N\eta^k}{N}\sum_{ijml}\frac{|\braket{{\bm u}_i, A(GA)^{k/2-1}{\bm u}_j}|^2|\braket{{\bm u}_m, A(GA)^{k/2-1}{\bm u}_l}|^2}{|(\lambda_i-z_1)(\lambda_j-z_{k/2+1})(\lambda_m-z_{k+1})(\lambda_l-z_{(3k)/2+1})|} \\
            &=N\eta+ N^2\eta^k \braket{|G|A(GA)^{k/2-1}|G|A(G^*A)^{k/2-1}}\braket{|G|A(GA)^{k/2-1}|G|A(G^*A)^{k/2-1}} \\
            &\lesssim N\eta+N^2\eta^k\left(\frac{1}{\eta^{k/2-1}}+\frac{\psi_k^\av}{N\eta^{k/2}}\right)^2\le\left(N\eta+\psi_k^\av\right)^2.
        \end{split}
    \end{equation}
   In the last line we used Lemma~\ref{absG lemma} for $a=k$. This concludes the bound for even $k$.

    Similarly, for odd $k$ we have
    \begin{equation}
        \label{eq:hopg}
        \begin{split}
            \Psi_{2k}^\mathrm{av} &= N\eta^{k} \abs{\braket{(GA)^{2k} -M_{2k}A}} \\
            &\lesssim N\eta+  N^2\eta^{k} \braket{|G|A(GA)^{(k+1)/2-1}|G|A(GA)^{(k+1)/2-1}} \\
            &\qquad\quad\times\braket{|G|A(GA)^{(k-1)/2-1}|G|A(GA)^{(k-1)/2-1}} \\
            &\lesssim (N\eta)^2+N\eta(\psi_{k+1}^\mathrm{av}+\psi_{k-1}^\mathrm{av})+\psi_{k+1}^\mathrm{av}\psi_{k-1}^\mathrm{av},
        \end{split}
    \end{equation}
    where to go to the last line we again used Lemma~\ref{absG lemma} for $a=k$. Additionally, to go from the first to the second line of \eqref{eq:hopg} we used (with  the shorthand notation $T:=A(GA)^{(k+1)/2-1}$, $S:=A(GA)^{(k-1)/2-1}$)
    \begin{equation}
        \label{eq:specdec}
        \begin{split}
            &\braket{(GA)^{2k}}\\
            &=\frac{1}{N}\sum_{ijml}\frac{\braket{{\bm u}_i, T{\bm u}_j}\braket{{\bm u}_j, T{\bm u}_m}
            \braket{{\bm u}_m, S{\bm u}_l}\braket{{\bm u}_l, S{\bm u}_i}}{(\lambda_i-z_1)(\lambda_j-z_{(k+1)/2+1})(\lambda_m-w_{k+2})(\lambda_l-w_{(3k+1)/2+1})} \\
            &\lesssim \frac{1}{N}\sum_{ijml}\frac{|\braket{{\bm u}_i, A(GA)^{(k+1)/2-1}{\bm u}_j}|^2|\braket{{\bm u}_m, A(GA)^{(k-1)/2-1}{\bm u}_l}|^2}{|(\lambda_i-z_1)(\lambda_j-z_{(k+1)/2+1})(\lambda_m-w_{k+2})(\lambda_l-w_{(3k+1)/2+1})|} \\
            &= N \braket{|G|A(GA)^{(k+1)/2-1}|G|A(G^*A)^{(k+1)/2-1}}\braket{|G|A(GA)^{(k-1)/2-1}|G|A(G^*A)^{(k-1)/2-1}}.
        \end{split}
    \end{equation}

    We now consider the isotropic case when $k$ is even and $j \ge 1$:

    \begin{equation}
        \label{eq:newrediso}
        \begin{split}
            \Psi_{k+j}^\iso &\lesssim \sqrt{N\eta}+\sqrt{N}\eta^{(k+j+1)/2}\braket{{\bm x}, (GA)^{k+j}G{\bm y}} \\
            &=\sqrt{N\eta}+\sqrt{N}\eta^{(k+j+1)/2}\braket{{\bm x}, (GA)^{k/2} GA(GA)^{j-1}G(AG)^{k/2}{\bm y}} \\
            &\lesssim \sqrt{N\eta}+N\eta^{(k+j+1)/2} \braket{{\bm x}, (GA)^{k/2}|G|(AG^*)^{k/2}{\bm x}}^{1/2}\braket{{\bm y}, (GA)^{k/2}|G|(AG^*)^{k/2}{\bm y}}^{1/2} \\
            &\qquad\qquad\qquad\quad\times \braket{|G|A(GA)^{j-1}|G|(AG^*)^{j-1}A}^{1/2}\\
            &\lesssim \sqrt{N\eta}+N\eta^{(k+j+1)/2} \left(\frac{1}{\eta^{k/2}}+\frac{\psi_k^\iso}{\sqrt{N\eta^{2k+1}}}\right)\left(\frac{1}{\eta^{j-1}}+\frac{\psi_{2j}^\av}{N\eta^j}\right)^{1/2} \\
            &\lesssim \big(N\eta+(N\eta)^{1/2}\psi_k^\iso\big)\big(1+(N\eta)^{-1/2}(\psi_{2j}^\av)^{1/2}\big),
        \end{split}
    \end{equation}
    Additionally, to go from the second to the third line we used that
    \begin{equation}
        \label{eq:goodschwarz}
        \begin{split}
            &\braket{{\bm x}, (GA)^{k/2} GA(GA)^{j-1}G(AG)^{k/2}{\bm y}}  \\
            &\quad = \sum_{ij}\frac{\braket{{\bm x}, (GA)^{k/2} {\bm u}_i}\braket{{\bm u}_i,A(GA)^{j-1}{\bm u}_j}\braket{{\bm u}_j,(AG)^{k/2}{\bm y}}}{(\lambda_i-z_{k/2+1})(\lambda_j-z_{k/2+j+1})} \\
            &\quad \le \left(\sum_{ij} \frac{\braket{|{\bm x}, (GA)^{k/2} {\bm u}_i}|^2|\braket{{\bm u}_j,(AG)^{k/2}{\bm y}}|^2}{|(\lambda_i-z_{k/2+1})(\lambda_j-z_{k/2+j+1})|}\right)^{1/2} \left(\sum_{ij}\frac{|\braket{{\bm u}_i,A(GA)^{j-1}{\bm u}_j}|^2}{|(\lambda_i-z_{k/2+1})(\lambda_j-z_{k/2+j+1})|}\right)^{1/2} \\
            &\quad=N^{1/2}\braket{{\bm x}, (GA)^{k/2}|G|(AG^*)^{k/2}{\bm x}}^{1/2}\braket{{\bm y}, (GA)^{k/2}|G|(AG^*)^{k/2}{\bm y}}^{1/2} \\
            &\quad \quad\quad\times\braket{|G|A(GA)^{j-1}|G|(AG^*)^{j-1}A}^{1/2}.
        \end{split}
    \end{equation}
\end{proof}

\section{Proof of Corollary~\ref{cor:HS}}
The proof of this corollary relies on the Helffer-Sj\"ostrand representation~\cite{MR1345723}, i.e. we express 
each $f_i(W)$ in \(f_1(W)A_1\cdots f_k(W)\) as an integral  of resolvents at different spectral parameters. 
Note that by eigenvalue rigidity (see e.g.~\cite[Theorem 7.6]{MR3068390} or~\cite{MR2871147}) the spectrum of \(W\) is contained in \([-2-\epsilon,2+\epsilon]\), for any small \(\epsilon>0\), with very high probability. In particular this implies that it is enough to consider test functions \(f_i\in H_0^{\lceil k-a/2\rceil}([-3,3])\), i.e. Sobolev functions on $\R$ which are non-zero only on $[-3,3]$.
In fact, this can be always achieved by multiplying the original $f$ with a smooth cut-off function without changing $f(W)$ up to an event of very small probability.

We present the proof only when all the matrices are traceless, i.e. when $a=k$. The proof in the general case is completely analogous and so omitted.

Let \(f\in H_0^{\lceil k/2\rceil}([-3,3])\) then we define its almost analytic extension by
\begin{equation}
    \label{eq:almostanalest}
    f_\mathbf{C}(z)=f_{\mathbf{C},k}(z)=f_{\mathbf{C},k}(x+\ii\eta):=\left[\sum_{j=0}^{\lceil k/2\rceil-1} \frac{(\ii\eta)^j}{j!} f^{(j)}(x)\right]\chi(\eta),
\end{equation}
where \(\chi(\eta)\) is a smooth cut-off equal to one on \([-5, 5]\) and equal to zero on \([-10,10]^c\)
and $f^{(j)}$ denotes the $j$-th derivative. Then we have
\begin{equation}
    \label{eq:HS}
    f(\lambda)=\frac{1}{\pi}\int_\mathbf{C}\frac{\partial_{\overline{z}}f_\mathbf{C}(z)}{\lambda-z}\,\dif^2 z,
\end{equation}
where \(\dif^2 z=\dif x\dif \eta\) denotes the Lebesgue measure on \(\mathbf{C}\equiv\mathbf{R}^2\) with \(z=x+\ii\eta\).

Consider $f_1,\dots,f_k\in H_0^{\lceil k/2\rceil}([-3,3])$, then by~\eqref{eq:HS} we get
\begin{equation}\label{eq:intrep}
    f_1(W)A_1\cdots f_k(W)=\frac{1}{\pi^k}\int_{\C^k} \prod_{i=1}^k\dif^2z_i\left[\prod_{i=1}^k (\partial_{\bar z}(f_i)_\C)(z_i)\right] G(z_1)A_1\cdots A_{k-1}G(z_k),
\end{equation}
where \(G(z_i):=(W-z_i)^{-1}\).

\begin{proof}[Proof of Corollary~\ref{cor:HS}]
    This argument  is very similar to the proof of \cite[Theorem 2.6]{MR4372147}, hence here we only explain 
     the main differences.
  
    Pick any  $\xi>0$ as a tolerance exponent  in the definition of $\landauOprec*{}$.
    Without loss of generality we can assume that \(\max_i \norm{f_i}_{H^{\lceil k/2\rceil}}\lesssim N^{1-\xi}\) 
(otherwise there is nothing to prove). 
We first prove the averaged case in~\eqref{eq:niceintform}, and then we explain the very minor changes required in the isotropic case.
    
    We start with the bound
    \begin{equation}
        \label{eq::bder}
        \int_\mathbf{R} \dif x |\partial_{\overline{z}} f_{\mathbf{C},k}(x+\ii\eta)|\lesssim \eta^{\lceil k/2\rceil-1}\norm{f}_{H^{\lceil k/2\rceil}}
    \end{equation}
    which easily follows from~\eqref{eq:almostanalest}. 
   Set   \(\eta_0:=N^{-1+\xi/2}\);
    first  we prove that the regime \(|\eta_i|\le \eta_0\), for some \(i\in [k]\)  in the integral representation of \(\braket{f_1(W)A_1\dots f_k(W)A_k}\) from~\eqref{eq:intrep} is negligible. Here we only present the proof in the case when \(|\eta_i|\le \eta_0\)
    happens only for a single index $i$; the changes when more than one \(\eta_i\)'s are small are exactly the same as explained above \cite[Eq. (3.21)]{MR4372147}, giving an even smaller bound.
    
    Without loss of generality we assume that \(|\eta_1|\le \eta_0\). In this regime we claim that  (with $z_i=x_i+\ii \eta_i$)
    \begin{equation}
        \label{eq:prelred}
        \begin{split}
            &\Bigg|\int \dif x_1\cdots \dif x_k\int_{\substack{|\eta_i|\ge \eta_0,\\ i\in [2,k]}}\dif \eta_2\cdots\dif \eta_k \int_{-\eta_0}^{\eta_0} \dif \eta_1  \left( \prod_{i=1}^k(\partial_{\bar z}(f_i)_\C)(z_i)\right)\braket{G(z_1)A_1\cdots G(z_k)A_k}\Bigg|\\
            &\qquad\prec \eta_0(N\eta_0)^{k/2-1}\max_i\norm{f_i}_{H^{\lceil k/2\rceil}}.
        \end{split}
    \end{equation}
    
    To prove~\eqref{eq:prelred} we will use Stokes theorem in the following form:
    \begin{equation}
        \label{eq:stokes}
        \int_{-10}^{10}\int_{\widetilde{\eta}}^{10} \partial_{\overline{z}}\psi(x+\ii\eta)h(x+\ii\eta)\, \dif x\dif \eta=\frac{1}{2\ii}\int_{-10}^{10}\psi(x+\ii\widetilde{\eta})h(x+\ii\widetilde{\eta})\, \dif x,
    \end{equation}
    for any \(\widetilde{\eta}\in[0,10]\), and for any \(\psi,h\in H^1(\mathbf{C})\equiv H^1(\mathbf{R}^2)\) such that \(\partial_{\overline{z}}h=0\) on the domain of integration and for \(\psi\) vanishing at the left, right and top boundary of the domain of integration.
   We will use~\eqref{eq:stokes} and the compact support of $(f_i)_\C$ to conclude that
    \begin{equation}
        \label{eq:ineeduse}
        \begin{split}
            &\int_\mathbf{R} \dif x_i \int_{\eta_0}^{10} \dif \eta_i  (\partial_{\overline{z}}(f_i)_\mathbf{C})(z_i) \braket{G(z_1)A_1\dots A_{i-1}G(z_i)A_i\dots G(z_k)A_k}\\
            &\qquad=\frac{1}{2\ii}\int_\mathbf{R} \dif x_i (f_i)_\mathbf{C}(x_i+\ii\eta_0) \braket{G(z_1)A_1\dots A_{i-1}G(x_i+\ii\eta_0)A_i\dots G(z_k)A_k},
        \end{split}
    \end{equation}
    for any fixed \(z_1,\dots, z_{i-1},z_{i+1},\dots,z_k\). Using~\eqref{eq:ineeduse} repeatedly for the \(z_2,\dots,z_k\)-variables,
    we conclude
    \begin{equation}
        \label{eq:intbpbettb}
        \begin{split}
            |\text{lhs.\ of~\eqref{eq:prelred}}|&=\frac{1}{2^{k-1}}\Bigg|\int \prod_{i=1}^k \dif x_i \int_{-\eta_0}^{\eta_0} \dif \eta_1 (\partial_{\overline{z}}(f_1)_\mathbf{C})(x_1+\ii\eta_1)\prod_{i=2}^k (f_i)_\mathbf{C}(x_i+\ii\eta_0) \\
            &\qquad\qquad\qquad\qquad\quad\times \braket{G(z_1)A_1G(x_2+\ii\eta_0)\cdots G(x_k+\ii\eta_0)A_k}|\Bigg|.
         \end{split}
    \end{equation}  
 
 Additionally, we will use the following bound on products of $k$ resolvents which holds uniformly in $|\eta|\ge N^{-10k}$. 
  For this bound we introduce $\rho(z):=\pi^{-1}|\Im m_{\mathrm{sc}}(z)|$, for any $z\in \mathbf{C}\setminus\mathbf{R}$, as the harmonic extension of the semicircle density noting that  $\rho(x+\ii 0)=\rho_\mathrm{sc}(x)$.
    \begin{lemma}
    \label{lem:addbsma}
    For any $k\in \mathbf{N}$, $z_i:=x_i+\ii \eta_i$, with $|x_i|\le 2$ and $|\eta_i|\ge N^{-10k}$, with $i\in [k]$, it holds 
            \begin{equation}
                    \label{eq:newresb}
           |\braket{G(z_1)AG(z_2)\dots  AG(z_k)A}| \prec N^{k/2-1} \prod_{i\in [k]}\frac{1}{\rho(x_i+\ii N^{-2/3})}\left(1+\frac{1}{N|\eta_i|}\right),
        \end{equation}
         \begin{equation}
                    \label{eq:newresbiso}
           |\braket{{\bm x},G(z_1)AG(z_2)\dots  AG(z_k){\bm y}}| \prec N^{(k-1)/2} \prod_{i\in [k]}\frac{1}{\rho(x_i+\ii N^{-2/3})}\left(1+\frac{1}{N|\eta_i|}\right),
        \end{equation}
        uniformly for deterministic 
        traceless matrices $\norm{A}\lesssim 1$, vectors $\norm{\bm x}+\norm{\bm y}\lesssim 1$, and $z_i$ as above.
    \end{lemma}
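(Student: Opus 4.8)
The plan is to combine the optimal local law of~\Cref{thm local law}, which holds on spectral scales \(\eta\ge N^{-1+\delta}\) for any fixed small \(\delta>0\), with a Cauchy--Schwarz and monotonicity argument --- modelled on the proof of~\Cref{lemma doubling} --- that descends all the way down to scale \(N^{-10k}\). By the grid argument below~\eqref{withsup} it suffices to prove the bounds for each fixed admissible choice of \(z_1,\dots,z_k\); moreover, since \(\rho(x+\ii N^{-2/3})\le\pi^{-1}\) for \(\abs x\le 2\), the factors \(\rho(x_i+\ii N^{-2/3})^{-1}\ge\pi\) only make the bound weaker and are merely produced for free by the single-resolvent local law. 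If \(\eta:=\min_i\abs{\eta_i}\ge N^{-1+\delta}\), both bounds follow at once from~\Cref{thm local law} (applied with \(a=k\) traceless matrices in the averaged case~\eqref{loc1}, and \(a=k-1\) in the isotropic case~\eqref{loc2}) together with the size bounds of~\Cref{lemma size M}: in this range \(\eta^{-\lfloor k/2\rfloor+1}+(N\eta^{k/2})^{-1}\lesssim N^{k/2-1}\) and \(\eta^{-\lfloor(k-1)/2\rfloor}+(\sqrt N\,\eta^{k/2})^{-1}\lesssim N^{(k-1)/2}\), which is dominated by the right-hand sides of~\eqref{eq:newresb} and~\eqref{eq:newresbiso}.

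If instead \(\abs{\eta_i}<N^{-1+\delta}\) for some \(i\), we descend from scale \(N^{-1+\delta}\) by the Cauchy--Schwarz step of~\Cref{lemma doubling}. Writing the chain in the spectral decomposition of \(W\) and inserting a bounded number of resolutions of the identity at positions chosen so that every resulting arc is flanked by traceless matrices, Cauchy--Schwarz in the summation indices bounds it by a bounded product of \emph{non-negative} ``doubled'' quantities of the form \(\braket{\abs{G(z_a)}\,T\,\abs{G(z_b)}\,T^\ast}\) and \(\braket{v,T\abs{G(z_a)}T^\ast v}\), \(v\in\set{\vx,\vy}\), where each \(T\) is a resolvent chain of roughly half the original length, still flanked by traceless matrices --- this is exactly the mechanism of~\eqref{eq:specdec}--\eqref{eq:goodschwarz}. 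Each such quantity is monotone increasing in every diagonal entry \(\abs{\lambda_j-z_a}^{-1}\) and \(\abs{\lambda_j-z_b}^{-1}\) of its two \emph{outer} resolvents, and since \(\abs{G(x+\ii\eta)}\le(\eta'/\eta)\abs{G(x+\ii\eta')}\) and \(\Im G(x+\ii\eta)\le(\eta'/\eta)\Im G(x+\ii\eta')\) entrywise in the eigenbasis whenever \(0<\eta\le\eta'\), an outer factor at a scale \(\abs{\eta_a}<N^{-1+\delta}\) may be replaced by the corresponding one at scale \(N^{-1+\delta}\) at the cost of a factor \(\eta'/\abs{\eta_a}\le N^{\delta}(1+(N\abs{\eta_a})^{-1})\).

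Iterating this reduction a bounded number of times (roughly \(\log_2 k\)) lifts every resolvent to scale \(N^{-1+\delta}\) and leaves bottom-level chains of bounded length entirely at that scale, which are estimated by~\Cref{thm local law},~\Cref{lemma size M}, and the single-resolvent estimates of~\Cref{thm sG local law} --- the latter, through \(\abs{G}^2=\Im G/\eta\) and \(\braket{v,\Im G(x_i+\ii N^{-1+\delta})v}\prec\norm{v}^2(\rho(x_i+\ii N^{-1+\delta})+N^{-\delta})\lesssim\norm{v}^2\rho(x_i+\ii N^{-2/3})\), being where the \(\rho(x_i+\ii N^{-2/3})\)-factors enter. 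The accumulated \(N^{O(\delta)}\)-losses are absorbed into \(\prec\) once \(\delta\) is chosen below the tolerance exponent, the accumulated factors \((1+(N\abs{\eta_i})^{-1})\) reproduce the product in the statement, and the prefactor comes out as \(N^{k/2-1}\) (resp.\ \(N^{(k-1)/2}\)) because the doubling relation \(\text{length }2m\sim N\cdot(\text{length }m)^2\) is consistent with exactly this scaling. The main work lies in the combinatorial bookkeeping of the iterated reduction --- tracking precisely which factor \((1+(N\abs{\eta_i})^{-1})\) is produced for each \(i\), and ensuring that the bottom-level chains always retain the traceless cancellation encoded in~\Cref{thm local law} --- together with controlling the extreme subregime where some \(\abs{\eta_i}\) is close to \(N^{-10k}\) and only the trivial bound \(\norm{G(z_i)}\le\abs{\eta_i}^{-1}\) is available for intermediate resolvents; this last point is harmless since the target bound is then a large positive power of \(N\).
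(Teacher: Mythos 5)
Your proposal takes a genuinely different route from the paper and would require substantially more work to make rigorous. The paper's proof is a one--step argument built on the ETH overlap bound: it evaluates the two--resolvent averaged local law (Theorem~\ref{thm local law} with $k=a=2$) at the density--adapted spectral parameters $z(\gamma_i,J)$ with $N\eta\rho=J$ to obtain $|\braket{\bm u_i,A\bm u_j}|^2\prec (N\rho_i\rho_j)^{-1}$, substitutes this into the spectral decomposition of the full length--$k$ chain, and observes that the multi-index sum then \emph{factorizes} into $k$ independent single sums $\frac1N\sum_i|\lambda_i-z_j|^{-1}\rho(\gamma_i+\ii N^{-2/3})^{-1}$, each estimated by $\rho(x_j+\ii N^{-2/3})^{-1}(1+(N|\eta_j|)^{-1})$ via rigidity. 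That is the entire argument, it is uniform in the parity of $k$, and the $\rho^{-1}$ factors are produced by the overlap bound itself. Your approach instead iterates the Cauchy--Schwarz doubling of Lemma~\ref{lemma doubling} together with an entrywise monotonicity lift $\abs{G(x+\ii\eta)}\le(\eta'/\eta)\abs{G(x+\ii\eta')}$; the monotonicity claim is correct, but you never use the overlap bound, which is the one ingredient the paper's proof actually rests on.

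Two specific points are off. First, your account of where the $\rho(x_i+\ii N^{-2/3})^{-1}$ factors come from is confused: you cite $\braket{v,\Im G v}\prec\rho$, but that puts $\rho$ in the numerator, whereas the target has $\rho^{-1}$ in the numerator. What you can legitimately say is only the trivial remark $\rho^{-1}\ge\pi$, so these factors may be inserted gratuitously once a bound without them is proved --- the $\Im G$ estimate does no work here and the phrase ``produced for free by the single--resolvent local law'' is a red herring. Second, the claim that ``iterating $\log_2 k$ times lifts every resolvent to scale $N^{-1+\delta}$'' is not correct as stated: each Cauchy--Schwarz step of the form \eqref{eq:specdec}--\eqref{eq:goodschwarz} \emph{discards} two of the four arcs, so roughly half the spectral parameters drop out at every iteration. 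This happens to be harmless for an upper bound (the corresponding factors are $\ge1$), but you need to say so, and the induction tracking which arcs survive, which resolvents actually get lifted, and how the odd--$k$ case is handled is a real piece of work you have not supplied. You would also need Lemma~\ref{absG lemma} rather than Theorem~\ref{thm local law} at the bottom level, since after the Schwarz step the chains contain $\abs{G}$'s, not plain resolvents. Incidentally, for even $k$ a \emph{single} Cauchy--Schwarz step --- pairing the overlaps $(\bm u_{i_1},A\bm u_{i_2}),(\bm u_{i_3},A\bm u_{i_4}),\dots$ --- already reduces the length--$k$ trace to a product of $k/2$ length--two positive traces $\braket{\abs{G_j}A\abs{G_{j+1}}A}$ without discarding any spectral parameter, making the iteration unnecessary; but even that shortcut leaves the odd--$k$ case and the bottom--level estimate to be done, and is still heavier than the paper's overlap--bound argument.
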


Armed with all these ingredients, we have  the following chain of inequalities in order to prove~\eqref{eq:prelred}:
   \begin{equation}
        \label{eq:intbpbettb2}
        \begin{split}
            |\text{rhs.\ of~\eqref{eq:prelred}}|&\prec \int \dif x_1 \frac{  | f_1^{(\lceil k/2\rceil)}(x_1) | }{\rho(x_1+\ii N^{-2/3})}\int_{\eta_r\le|\eta_1|\le \eta_0}\eta_1^{\lceil k/2\rceil-1}\left(1+\frac{1}{N|\eta_1|}\right)\dif \eta_1 \left(\prod_{i=2}^k\int\dif x_i \frac{1}{\rho(x_i+\ii N^{-2/3})}\right) 
             \\
            &\quad +\eta_0\norm{f_1}_{H^{\lceil k/2\rceil}} \int_{|\eta_1|\le \eta_r}\eta_1^{\lceil k/2\rceil-2}\eta_0^{-k+1}  \dif \eta_1 \\
            &\lesssim \eta_0 (N\eta_0)^{k/2-1}\left(\int\dif x_1\left|f_1^{(\lceil k/2\rceil)}(x_1)\right|^2\right)^{1/2}
             \left(\int \frac{\dif x_1}{\rho(x_1+\ii N^{-2/3})^2}\right)^{1/2} +\eta_0 \norm{f_1}_{H^{\lceil k/2\rceil}} \\
            &\lesssim \eta_0 (N\eta_0)^{k/2-1}\norm{f_1}_{H^{\lceil k/2\rceil}},
        \end{split}
    \end{equation}
where in the first step we first used
$\norm{f_i}_\infty\lesssim 1$ for $i\in [2,k]$ and  after
  splitting the $\eta_1$ integration,  in the regime $\eta_r\le |\eta_1|\le \eta_0$ we used
     \eqref{eq:newresb} together  with
$$
|\partial_{\overline{z}}(f_1)(x_1+\ii\eta_1)|\lesssim  \eta_1^{\lceil k/2\rceil-1} | f_1^{(\lceil k/2\rceil)}(x_1) |
$$
for any $|x_1|\le 2$, $|\eta_1|\le \eta_0$ 
from \eqref{eq:almostanalest}. 
In the complementary regime  $|\eta_1|<\eta_r$ we used the trivial 
      norm bound  \(|\braket{G(z_1)A_1\cdots G(z_k)A_k}|\le \prod_i \norm{G(z_i)A_i} \le \prod_i |\eta_i|^{-1}\)
       together with \eqref{eq::bder}.  
      In the penultimate inequality of~\eqref{eq:intbpbettb2} we also used that $\int 1/\rho$ is finite 
    due to the square root singularity of $\rho$, and  that $\int 1/\rho^2\lesssim \log N$ thanks to the tiny $N^{2/3}$-regularisation. This concludes the proof of \eqref{eq:prelred}.

    We now estimate the integration regime   in~\eqref{eq:intbpbettb}  where
     \(|\eta_i|\ge \eta_0\) for all \(i\in [k]\). By~\eqref{eq:intrep} and the local law~\eqref{loc1}, we conclude that
    \begin{equation}
        \label{eq:mintegralformhs}
        \begin{split}
            &\braket{f_1(W)A_1\cdots f_k(W)A_k} \\
            &\qquad=\frac{1}{\pi^k}\int_{\R^k}\int_{\eta_0\le |\eta_i|\le 10}\dif^2z_1\cdots \dif^2z_k (\partial_{\bar z}(f_1)_\C)(z_1)\cdots (\partial_{\bar z}(f_k)_\C)(z_k)\braket{M_{[k]}A_k} \\
            &\qquad\quad+\mathcal{O}_\prec\left(\eta_0(N\eta)^{k/2-1}\max_i\norm{f_i}_{H^{\lceil k/2\rceil}}\right).
        \end{split}
    \end{equation}
where we abbreviated $M_{[k]}=M(z_1,A_1,\ldots,z_{k-1},A_{k-1},z_k)$.
    Note that in~\eqref{eq:mintegralformhs} we estimated the error term 
$N^{-1} (\min |\eta_i|)^{-k/2}$ coming from the local law~\eqref{loc1} by
    \begin{equation}
        \label{eq:mintegralformhs2}
        \begin{split}
            &\frac{1}{\pi^k}\int_{\R^k}\int_{\eta_0\le |\eta_i|\le 10}\dif^2{\bm z} \prod_{i=1}^k(\partial_{\bar z}(f_i)_\C)(z_i)\braket{(G(z_1)A_1\dots G(z_k)-M_{[k]})A_k} \\
            &\qquad\qquad\qquad\quad=\mathcal{O}_\prec\left(N^{-1}\max_i\norm{f_i}_{H^{\lceil k/2\rceil}}\right),
        \end{split}
    \end{equation}
    with \(\dif^2{\bm z}:=\dif^2 z_1\dots \dif^2 z_k\). More precisely, in~\eqref{eq:mintegralformhs2} we considered the regime \(\eta_1\le \eta_2\le \cdots \le \eta_k\) (all the other regimes give the same contribution by symmetry) and performed \(k-1\) integration by parts in the \(z_i\)-variables, \(i\in [2,k]\), as in~\eqref{eq:ineeduse}, and then estimated the remaining \(\partial_{\overline{z}}(f_1)_\mathbf{C}(z_1)\) by~\eqref{eq::bder}. The error term $N^{-1}  |\eta_1|^{-k/2}$ from the local law together with the $|\eta_1|^{\lceil k/2\rceil-1}$
    bound from~\eqref{eq::bder} and the integration in $\eta_1$ yields~\eqref{eq:mintegralformhs2}.
    
    Finally, using that by~\eqref{eq:prelred} the regime \(\eta_i\in [\eta_r,\eta_0]\) can be added back to~\eqref{eq:mintegralformhs} at the price of an error \(\eta_0(N\eta_0)^{k/2-1}\max_i\norm{f_i}_{H^{\lceil k/2\rceil}}\)
    we conclude the proof of the averaged case in~\eqref{eq:niceintform} modulo the computation of the leading deterministic term 
which     is done exactly as in \cite[Proof of Theorem 2.6]{MR4372147} and so the details are omitted.
    
    The proof of the isotropic case in~\eqref{eq:niceintform} is very similar. The only differences are the following: (i) to bound the small \(\eta_i\)-regime we have to use \eqref{eq:newresbiso} instead of \eqref{eq:newresb}, which still gives exactly the same bound~\eqref{eq:prelred}; (ii) to estimate the error term coming from the isotropic local law~\eqref{loc2} (used in the regime when \(|\eta_i|\ge \eta_0\) for all \(i\in [k]\)) we have to replace~\eqref{eq:mintegralformhs2} by
    \begin{equation}
        \label{eq:finbas}
        \begin{split}
            &\frac{1}{\pi^k}\int_{\R^k}\int_{\eta_0\le |\eta_i|\le 10}\dif^2{\bm z} \prod_{i=1}^k(\partial_{\bar z}(f_i)_\C)(z_i)\braket{{\bm x},(G(z_1)A_1\dots G(z_k)-M_{[k]}) {\bm y}} \\
            &\qquad\qquad\qquad\quad=\mathcal{O}_\prec\left(N^{-1/2}\max_i\norm{f_i}_{H^{\lceil k/2\rceil}}\right).
        \end{split}
    \end{equation}
    The proof of~\eqref{eq:finbas} is exactly the same as the proof of~\eqref{eq:mintegralformhs2}.
\end{proof}

\appendix
\section{Additional proofs}
\label{sec:addproofs}
\begin{proof}[Proof of \Cref{lemma size M}]
    We first note that the inequality 
    \begin{equation}\label{msc bound}
        \abs{m_\mathrm{sc}[z_1,\ldots,z_j]}\lesssim \frac{1}{\eta^{j-1}} 
    \end{equation}
    is a direct consequence of the integral representation~\cref{msc dd}. The bound~\cref{msc bound} is sharp only when not all \(\Im z_i\) have the same sign. If all signs agree, then the iterated divided difference remains bounded by the smoothness of \(m_\mathrm{sc}\) in the bulk. By M\"obius inversion~\cite[Eq.~(2.3), Lemma 2.16]{MR4372147} we have 
    \begin{equation}\label{m circ}
        \begin{split}
            m_\circ[B] &= \sum_{\pi\in\NCP(B)} (-1)^{\abs{\pi}-1} \biggl(\prod_{S\in K(\pi)}C_{\abs{S}-1}\biggr)\prod_{T\in \pi} m[T]  \\
            &=m[B]+\sum_{\substack{\pi\in\NCP(B)\\\abs{\pi}\ge2}} (-1)^{\abs{\pi}-1} \biggl(\prod_{S\in K(\pi)}C_{\abs{S}-1}\biggr)\prod_{T\in \pi} m[T]\\
            &=m[B] + \sum_{\substack{\pi\in\NCP(B)\\\abs{\pi}\ge2}} \landauO*{\frac{1}{\eta^{\abs{B}-\abs{\pi}}}}\lesssim \frac{1}{\eta^{\abs{B}-1}},
        \end{split}
    \end{equation}
    where \(C_n\) is the \(n\)-th Catalan number. Here we used~\cref{msc bound} in the third and fourth step recalling that 
    \begin{equation}\label{mT}
        m[T]=m\bigl[\set{z_i\given i \in T}\bigr].
    \end{equation}
    We note that~\cref{m circ} is sharp since~\cref{msc bound} is sharp and leading order cancellations are impossible in the ultimate line. 
    
    From the definition~\cref{eq pTr def} it follows that \(\pTr_{K(\pi)}\) is non-zero 
    only when no block of \(K(\pi)\) is a 
    singleton \(\set{i}\) with \(\braket{B_i}=0\), and therefore \(\abs{K(\pi)}\le k - \lceil a/2\rceil\) 
    or equivalently \(\abs{\pi}\ge 1+\lceil a/2\rceil\). Thus~\cref{Msize} 
    follows directly from~\cref{eq M def}.
\end{proof}
\begin{proof}[Proof of~\cref{lemma M rec}]
    We only prove~\cref{M rec 1} as the proof of~\cref{M rec 2} is completely analogous. 
    We recall the alternative definition of \(M\) from~\cite[Eq. (5.12)]{MR4372147}  
    \begin{equation}\label{eq M q def}
        \begin{split}
            \frac{M(z_1,\ldots,z_k)}{m_1\cdots m_k} ={}& \sum_{E\in\NCG[1,k]} \pTr_{K(\pi(E))}(\bm A_{[1,k)}) q_E,\\ 
            q_E:={}&\prod_{e\in E}q_e,\quad q_{ij}:=\frac{m_im_j}{1+m_im_j}, 
            \quad \bm A_S := \bigl( A_i\;\vert\;i\in S),
        \end{split}
    \end{equation}
     where \(\NCG[1,k]\) denotes the set of \emph{non-crossing graphs} on the vertex set \([1,k]=\set{1,\ldots,k}\),
    i.e. graphs  without crossing edges \((ab),(cd)\) with \(a<c<b<d\). The graphs are identified with their edge sets $E$.
    Note that the connected components of any non-crossing graph $E$ form a non-crossing partition of the set  \([1,k]\)
    that we denoted by $\pi(E)$ in~\eqref{eq M q def}.

    For any fixed $j\in [1,k]$, we now partition the set of non-crossing graphs as
    \begin{equation}\label{graphs}
        \begin{split}
            \NCG[1,k] &= 
            \cG_j \sqcup\bigsqcup_{l=1}^{j-1}\bigl(\cG_{lj}^\mathrm{i}\times
            \cG_{lj}^\mathrm{o}\bigr)\sqcup\bigsqcup_{l={j+1}}^{k}\bigl(\cG_{jl}^\mathrm{i}\times\cG_{jl}^\mathrm{o}\bigr),
        \end{split}
    \end{equation}
    according to the idea that each non-crossing graph either
    \begin{enumerate}[label=(\roman*)]
        \item has \(j\) as an isolated vertex, or
        \item has a maximal \(l<j\) with \((lj)\in E\), and the graph can be written as the product of a graph \emph{inside} and a graph \emph{outside} the interval \([l,j]\), or 
        \item has no \(l<j\) with \((lj)\in E\) but there is a maximal \(l>j\) with \((jl)\in E\), 
        and the graph can be written as the product of a graph \emph{inside} and a graph \emph{outside} the interval \([j,l]\).
    \end{enumerate}
  The corresponding formal definitions used  in~\eqref{graphs}   are given 
    \begin{equation}
        \begin{split}
            \cG_j&:=\NCG([1,k]\setminus\set{j}) \\
            \cG_{lj}^\mathrm{i}&:=\NCG[l,j),\qquad \cG_{lj}^\mathrm{o}:= \set{E\in\NCG([1,l]\cup[j,k])\given (lj)\in E}\\
            \cG_{jl}^\mathrm{i}&:=\set{E\in\NCG[j,l]\given (jl)\in E},\qquad \cG_{jl}^\mathrm{o}:= \NCG([1,j)\cup[l,k]).
        \end{split}
    \end{equation}
    We note that for graphs \(E\in\NCG[1,k]\) with an isolated vertex \(j\) whose edge-set is given by the edge-set \(E=E_1\in\cG_j\) of its restriction to \([1,k]\setminus\set{j}\) we have
    \begin{equation}
        \pTr_{K(\pi(E))}(\bm A_{[1,k)}) = \pTr_{K(\pi(E_1))}(\bm A_{[1,j-2]},A_{j-1}A_j,\bm A_{[j+1,k)}).
    \end{equation}
    Similarly for \(E=E_1\cup E_2\) with \(E_1\in\cG_{lj}^\mathrm{i},E_2\in\cG_{lj}^\mathrm{o}\) for some \(l<j\) we have 
    \begin{equation}
        \pTr_{\pi(E)}(\bm A_{[1,k)})=\braket{\pTr_{K(\pi(E_1))} (\bm A_{[l,j-2]})A_{j-1}} \pTr_{K(\pi(E_2))}(\bm A_{[1,l)},I,\bm A_{[j,k)})
    \end{equation}
    since the vertices \(l+1,\ldots,j-1\) are necessarily in distinct connected components than the vertices \(1,\ldots,l-1,j+1,\ldots,k\) due to the non-crossing property. Finally, for \(E=E_1\cup E_2\) with \(E_1\in\cG_{jl}^\mathrm{i},E_2\in\cG_{lj}^\mathrm{o}\) for some \(l>j\) we have 
    \begin{equation}
        \pTr_{\pi(E)}(\bm A_{[1,k)}) = \braket{\pTr_{K(\pi_1)} (\bm A_{[j,l)})}\pTr_{K(\pi(E_2))}(\bm A_{[1,j)},\bm A_{[l,k)})
    \end{equation}
    by the same reasoning. 
    
    Using this decomposition in~\eqref{eq M q def}, we thus obtain 
    \begin{equation}
        \begin{split}\label{M sum}
            &\frac{M(z_1,\ldots,z_k)}{m_1\cdots m_k} \\
            &=  \sum_{E\in\cG_j} q_E \pTr_{K(\pi(E))}(\bm A_{[1,j-2]},A_{j-1}A_j,\bm A_{[j+1,k)})\\
            &\quad + \sum_{l=1}^{j-1}  \sum_{E_1\in\cG_{lj}^\mathrm{i}} q_{E_1}\braket{\pTr_{K(\pi(E_1))} (\bm A_{[l,j-2]})A_{j-1}} \sum_{E_2\in\cG_{lj}^\mathrm{o}} q_{E_2} \pTr_{K(\pi(E_2))}(\bm A_{[1,l)},I,\bm A_{[j,k)})\\
            &\quad + \sum_{l=j+1}^{k}  \sum_{E_1\in\cG_{jl}^\mathrm{i}} q_{E_1}\braket{\pTr_{K(\pi_1)} (\bm A_{[j,l)})} \sum_{E_2\in\cG_{jl}^\mathrm{o}} q_{E_2} \pTr_{K(\pi(E_2))}(\bm A_{[1,j)},\bm A_{[l,k)}).
        \end{split}
    \end{equation}
    
    By~\cref{eq M q def} it follows directly that 
    \begin{equation}\label{E cG id 1}
        \begin{split}
            \sum_{E_1\in\cG_{lj}^\mathrm{i}} q_{E_1} \pTr_{K(\pi(E_1))} (\bm A_{[l,j-2]}) &= \frac{M(z_l,A_l,\ldots,A_{j-2},z_{j-1})}{m_l\cdots m_{j-1}}\\
            \sum_{E_2\in\cG_{jl}^\mathrm{o}} q_{E_2} \pTr_{K(\pi(E_2))}(\bm A_{[1,j)},\bm A_{[l,k)}) &=\frac{M(z_1,\ldots,A_{j-1},z_{l},\ldots,z_{k})}{m_1\cdots m_{j-1}m_l\cdots m_{k}},
        \end{split}
    \end{equation}
    while for \(\cG_{lj}^\mathrm{o}\) and \(\cG_{jl}^\mathrm{i}\) we note that the graphs with or without the edges \((lj)\) or \((jl)\), respectively, give exactly the same tracial expression, and therefore 
    \begin{equation}\label{E cG id 2}
        \begin{split}
            \sum_{E_2\in\cG_{lj}^\mathrm{o}} q_{E_2} \pTr_{K(\pi(E_2))}(\bm A_{[1,l)},I,\bm A_{[j,k)}) &= \frac{q_{lj}}{1+q_{lj}}\frac{M(z_1,\ldots,z_l,I,z_j,\ldots,z_k)}{m_1\cdots m_l m_j\cdots m_k} \\
            \sum_{E_1\in\cG_{jl}^\mathrm{i}} q_{E_1}\braket{\pTr_{K(\pi_1)} (\bm A_{[j,l)})} &=\frac{q_{jl}}{1+q_{jl}}\frac{M(z_j,\ldots,z_l)}{m_1\cdots m_{l}}.
        \end{split}
    \end{equation}
    The claim now follows from using~\cref{E cG id 1,E cG id 2} within~\cref{M sum} and using \(q_{lj}/(1+q_{lj})=m_lm_j\). 
\end{proof}

  \begin{proof}[Proof of Lemma~\ref{lem:addbsma}]
     Let $\epsilon>0$ be arbitrary small and set $J:=N^\epsilon$. For any  $|x|\le 2$, define $z(x,J)=x +\ii\eta(x,J)$ where
     $\eta(x,J)$ is uniquely defined
      implicitly via the equation $N\eta(x,J)\rho( z(x,J))=J$. 
 Note that  $\eta(x,J)\gtrsim N^{-1+\epsilon}$.
      Denote by $\lambda_i$ the eigenvalues of $W$ and by ${\bm u}_i$ the corresponding orthonormal eigenvectors. 
      Additionally, we define the quantiles $\gamma_i$ implicitly by
  \begin{equation}
  \label{eq:defqua}
  \int_{-\infty}^{\gamma_i}\rho_{\mathrm{sc}}(x)\,\dif x=\frac{i}{N}, \qquad i\in [N]
  \end{equation}
  and we recall the {\it rigidity} bound (see e.g. \cite[Theorem 7.6]{MR3068390} or \cite{MR2871147})
  \[
  \big|\lambda_i-\gamma_i\big|\prec \frac{1}{N^{2/3}(N+1-i)^{1/3}}, \qquad 1\le i\le N.
  \]
  Using this eigenvalue rigidity and the  spectral decomposition  of $W$,  it is easy to see
 the following bound on the {\it  overlaps} of the eigenvectors with a test matrix $A$
       \begin{equation}
     \label{eq:newovb}
     \begin{split}
     |\braket{{\bm u}_i,A{\bm u}_j}|^2 & \prec \frac{1}{N\rho(z(\gamma_i,J))\rho(z(\gamma_j,J))}\braket{\Im G(z(\gamma_i,J))A\Im G(z(\gamma_j,J)) A} \\ & \prec \frac{1}{N\rho(z(\gamma_i,J))\rho(z(\gamma_j,J))}
     \end{split}
     \end{equation}
     for any $i,j\in [N]$.  Here we neglected $N^\epsilon$-factors since $\epsilon>0$ is arbitrary 
     small and eventually it can be incorporated in the $\prec$-notation.     
     Note that in the last inequality of \eqref{eq:newovb} we used \eqref{loc1} with $k=a=2$ and
     that the corresponding deterministic term, a linear combination of  $\braket{M(z_i, A_1, z_j)A_2}$ is bounded, see~\eqref{Msize},
      where
     $z_i = z(\gamma_i, J)$ or $z_i = \bar z(\gamma_i, J)$.
     
    Given the overlap bound~\eqref{eq:newovb}, we
     now present the proof of \eqref{eq:newresb}; the proof of \eqref{eq:newresbiso} is completely analogous and so omitted.
     By spectral decomposition for each resolvent together with \eqref{eq:newovb}, using that $\rho(z(x,J))\sim\rho(x+\ii N^{-2/3})$ for any $|x|\le 2$ (modulo $N^\epsilon$-factor), we find that
     \begin{equation}
     \begin{split}
       |\braket{G(z_1)AG(z_2)\dots  AG(z_k)}| &\prec N^{k/2-1} \prod_{j=1}^k\frac{1}{N}\sum_{i=1}^N\frac{1}{|\lambda_i-z_j|
       \rho(\gamma_i+\ii N^{-2/3})}\\
       &\prec N^{k/2-1} \prod_{j=1}^k\frac{1}{\rho(x_j+\ii N^{-2/3})}\left(1+\frac{1}{N|\eta_j|}\right),
       \end{split}
     \end{equation}
     where we used that
              \begin{equation}
     \label{eq:hopb}
     \begin{split}
    \frac{1}{N} \sum_{i=1}^N\frac{1}{|\lambda_i-z_j|\rho(\gamma_i+\ii N^{-2/3})}&\prec  \frac{1}{N}\sum_{|i-i_0|\le N^\delta}\frac{1}{|\lambda_i-z_j|\rho(\gamma_i+\ii N^{-2/3})} \\
    &\quad+ \frac{1}{N}\sum_{|i-i_0|> N^\delta}\frac{1}{|\gamma_i-\gamma_{i_0}|\rho(\gamma_i+\ii N^{-2/3})} \\
    &\prec \frac{1}{\rho(x_j+\ii N^{-2/3})}\left(1+\frac{1}{N|\eta_j|}\right).
     \end{split}
     \end{equation}
     Here $\delta>0$ is an arbitrary small constant (and we neglected $N^\delta$-factors 
     since eventually it can be incorporated in the $\prec$-notation), 
     and $i_0=i_0(j)$ is the index such that $\gamma_{i_0(j)}$ is the closest quantile
      to the fixed $x_j=\Re z_j$. In the first inequality in~\eqref{eq:hopb} we used rigidity to replace $\lambda_i $ and $z_j$
      with the closest quantiles.  In the last step in~\eqref{eq:hopb}  we first used
   that $\rho(\gamma_i+\ii N^{-2/3})$ and $\rho(x_j+\ii N^{-2/3})$ are comparable up to an  $N^\delta$ factor, again by rigidity, 
      and then we used  the trivial bound $1/|\lambda_i-z_j|\le 1/|\eta_j|$ in the first sum and performed the second
      sum using the regular spacing
      of the quantiles.
      \end{proof}

\section{Proof of the multi-resolvent local law in the $d\ge1$ regime}\label{app:largeeta}    
The $d\ge1$ regime is conceptually much simpler than  $d\le 1$ for several reasons. First,
there is no need to keep track of the traceless matrices separately. Second, the trivial 
norm estimate $\| G(z)\|\le 1/ d$ is affordable without much loss. These two facts mean
that long chains of the form $GAGA\ldots G$ can affordably be reduced to much shorter chains
by estimating intermediate $A$ and $G$ factors simply by norm. This trivially takes care of the
reduction problem, the key difficulty in the proof when $d\le 1$; in particular no analogue of  Lemma~\ref{lemma doubling}
is needed. Furthermore, we will not need
to introduce  the quantities $\Psi^{\iso/\av}$ and $\psi^{\iso/\av}$ and gradually improve the estimate on
them; the system of master inequalities reduces to a simple induction on the length $k$ of the 
resolvent chain. 

We will present the proof of the averaged law~\eqref{loc1} for $d\ge1$, the corresponding isotropic 
law~\eqref{loc2} is completely analogous and  will be omitted. The backbone of the argument is 
a very simplified form of Section~\ref{sec master proof}. For notational simplicity, we again
do not carry the precise dependence of the resolvents on the spectral parameters and we
denote every deterministic matrix $A_i$ generically by  $A$. Note that $A$'s are not necessarily traceless.

We prove~\eqref{loc1} by induction on $k$, the initial $k=1$ case will be proven along the way.
We now fix some $k\ge 1$ and in the case $k\ge 2$, we
assume that~\eqref{loc1} has been proven for all resolvent chains of length at most $k-1$.
The starting point of the proof of~\eqref{loc1} for $k$
 is formula~\eqref{eq av un repl0} that we repeat here
\begin{equation}\label{eq av un repl01}
    \begin{split}
        &\braket{(GA)^k} \Bigl(1+\landauOprec*{\frac{1}{Nd^2}}\Bigr)\\
              &\quad = m \braket{A(GA)^{k-1}} + m\sum_{j=1}^{k-1}\braket{(GA)^j G}\braket{(GA)^{k-j}} - m\braket{\un{W(GA)^k}}.        
    \end{split}
\end{equation}
Note that the $1/(N\eta)$ in the error term in the lhs.\ is replaced with $1/(Nd^2)$ since it came from the 
standard single resolvent local law from~\cref{thm sG local law}. 
Notice that all but one chains in the rhs. of~\eqref{eq av un repl01} have less than $k$ resolvents, these can be approximated by their deterministic counterparts using the induction hypothesis of the form
\begin{equation}
    \begin{split}
        \abs*{\braket{A(GA)^{k-1}}- \braket{A M_{k-1}A}} &\prec \frac{1}{Nd^{k}}, \qquad\quad k\ge 2\\
        \abs{\braket{(GA)^jG-M_{j+1}}} &\prec\frac{1}{Nd^{j+2}}, \qquad 1\le j\le k-2 \\ 
        \abs*{\braket{(GA)^{k-j}}-\braket{M_{k-j}A}}&\prec \frac{1}{Nd^{k-j+1}},   \quad j\le k-1.
    \end{split}    
\end{equation}
The $k=1$ case is particularly simple, since the first term in the rhs. of~\eqref{eq av un repl01} is simply $m\braket{A}$
and the sum is absent. In the  $k\ge 2$ case, 
for the remaining \(\braket{(GA)^{k-1}G}\) term we instead use the integral 
representation~\cref{lemma reduc2,lemma reduc2M} in order to also estimate this term using the induction hypothesis as 
\begin{equation}
    \abs{\braket{(GA)^{k-1}G-M_{k}}}\prec \frac{1}{N d^{k+1}}.
\end{equation}

Thus, similarly to the telescopic summation~\eqref{telescope}
and using the deterministic identity~\cref{M rec 1}, we obtain the following analogue of~\eqref{eq av un repl}:
\begin{equation}\label{eq av un repl3}
    \begin{split}
        \braket{(GA)^k - M_kA} = %
         - m\braket{\un{W(GA)^k}}  +\landauOprec*{\wt\cE_k^\av}, \quad \mbox{with}\quad \wt\cE_k^\av:= \frac{1}{Nd^{k+1}},
    \end{split}
\end{equation}
where the error term $\wt\cE_k^\av$ has been appropriately redefined compared with~\eqref{eq av un repl}.

Now we fix any integer $p$  and compute the $2p$-th moment of the lhs. of~\eqref{eq av un repl3}
exactly as in~\eqref{eq av cum exp} with the definition of $\Xi_k^\av$ given in~\eqref{Xi av}.
We follow the calculation from~\eqref{eq av cum exp} through~\eqref{derfinal} but the estimates
are greatly simplified as follows. Instead of~\eqref{av Gaussian bound} we now have
\begin{equation}\label{av Gaussian bound1}
    \begin{split}
      |m|  \frac{\abs{\braket{(GA)^{2k}G}}+\abs{\braket{(GA)^{k}(G^\ast A)^kG^\ast}}}{N^2} \prec  \frac{1}{N^2d^{2k+2}}
        = \Big( \wt\cE_k^\av\Big)^2
    \end{split}
\end{equation}
by a trivial norm bound and $d\ge 1$. Note that we exploited the additional decay $|m|\lesssim 1/d$ 
unlike in~\eqref{av Gaussian bound} where $|m|\lesssim 1$ was used.

Now we turn to the estimate of $\Xi_k^\av$. The naive bounds~\eqref{eq:naiveb} become 
\begin{equation}\label{eq:naiveb1}
         \abs{\partial^{\vl} ((GA)^k)_{ba}} \prec \frac{1}{d^{k+ |\vl|}}, \qquad 
        \abs{\partial^{\vj} \braket{(G^{(\ast)}A)^k}} \prec \frac{1}{Nd^{k+|\vj|}}        
\end{equation}
as long as $\vj\ne 0$,
and they again follow from the trivial norm estimates. 
Using
these bounds in~\eqref{Xi av}, we have
\begin{equation}\label{Xinaive}
\Xi_k^\av \prec N^{-(\abs{\vl}+\sum (J\cup J_\ast)+3)/2} N^2 \frac{1}{d^{k+|\vl|}} \Big( \frac{1}{Nd^{k+1}}\Big)^{\sum (J\cup J_\ast)}
 \le N^{(1-\abs{\vl})/2} \Big(  \wt\cE_k^\av \Big)^{1+\sum (J\cup J_\ast)}.
\end{equation}
If $\abs{\vl}\ge1$, then this naive bound is already sufficient. When $\abs{\vl}=0$, then we perform the $\sum_{ab}$
summation a bit more carefully, similarly to the second line of~\eqref{ex with Ward}:
$$ 
 \sum_{ab} | ((GA)^k)_{ba}| \le N^{3/2} \sqrt{ \braket{ (GA)^{k-1} GG^* (AG^*)^{k-1}}} \prec N^{3/2}d^{-k}.
$$
Note that this bound  gains a factor $1/\sqrt{N}$ compared to the trivial bound 
in~\eqref{Xinaive} since the double sum now contributes only by a factor $N^{3/2}$  instead of $N^2$.
This gain is sufficient to improve~\eqref{Xinaive} to
\begin{equation}\label{Xigood}
\Xi_k^\av \prec \Big(  \wt\cE_k^\av \Big)^{1+\sum (J\cup J_\ast)}.
\end{equation}
Plugging this estimate together  with~\eqref{av Gaussian bound1} into~\eqref{eq av cum exp},
using a Young inequality as we did when going from~\eqref{Y1} to~\eqref{Y2} and recalling
that $p$ was arbitrary, we obtain
$$
 \abs{\braket{(GA)^k - M_kA} }\prec  \wt\cE_k^\av
$$
i.e. we proved~\eqref{loc1} in the $d\ge 1$ regime.

We omit 
the proof of~\eqref{loc2} in the same regime since can be obtained analogously, following
a substantial simplification of the argument in Section~\ref{sec:isso} along the
same lines as the average bound was simplified following Section~\ref{sec:avve}.

\printbibliography%
\end{document}